\newlength{\defbaselineskip}
\newcommand{\setlinespacing}[1]%
           {\setlength{\baselineskip}{#1 \defbaselineskip}}
\DeclareMathOperator*\ess{ess\,sup}
\newcommand{\R}{\mathbb R} 
\newcommand{\N}{\mathbb{N}} 
\theoremstyle{plain}
\newtheorem{thm}{Theorem}[section]
\newtheorem{cor}[thm]{Corollary}
\newtheorem{lem}[thm]{Lemma}
\newtheorem{prop}[thm]{Proposition}
\theoremstyle{definition}
\newtheorem{defn}{Definition}[section]
\theoremstyle{remark}
\newtheorem{rem}{Remark}[section]
\theoremstyle{example}
\newtheorem{ex}{Example}[section]
\numberwithin{equation}{section}
\date{} 
\begin{document}
\begin{frontmatter}



\title{Approximate controllability for nonlinear degenerate parabolic problems with bilinear
control\tnoteref{T}}
\tnotetext[T]{
This work was supported by the ÒInstituto Nazionale di Alta MatematicaÓ (INdAM), through the GNAMPA Research Project 2014: 
\lq\lq Controllo moltiplicativo per modelli diffusivi nonlineari'' (coordinator G. Floridia).
\\
Moreover, this research was 
performed in the framework of the GDRE CONEDP (European Research Group on \lq\lq Control of Partial Differential Equations'') issued by CNRS, INdAM and Universit\'e de Provence.
}

\author{Giuseppe Floridia\corref{cor1}
}
\address{Dipartimento di Matematica,\\
        Universit\`a di Roma \lq\lq Tor Vergata'',\\
        I-00161 Roma, Italy\fnref{label3}}
\ead{floridia@mat.uniroma2.it}
\cortext[cor1]{Post Doc \textit{Istituto Nazionale di Alta Matematica} (INdAM) \lq\lq F. Severi'', Roma.}

\begin{abstract}
In this paper, we study the global approximate multiplicative controllability for nonlinear degenerate parabolic Cauchy-Neumann problems.
First, we will obtain embedding results for weighted Sobolev spaces, 
 that have proved decisive in reaching well-posedness for nonlinear degenerate problems.
Then, we show that the above systems can be steered in
$L^2
$
from any nonzero, nonnegative initial state into any neighborhood of any desirable nonnegative target-state by bilinear piecewise static controls. 
Moreover, we extend the above result relaxing the 
sign constraint on the 
initial date.
\end{abstract}

\begin{keyword}
approximate controllability \sep bilinear control \sep semilinear equations \sep degenerate parabolic equations \sep weighted Sobolev spaces
\MSC 
\sep 93B05 
\sep 35K58 
\sep 35K65 
\sep 35K61  
\sep 35K45 
\end{keyword}
\end{frontmatter}

\section{Introduction}

This paper is concerned with the analysis of semilinear parabolic control systems in one space dimension, 
governed in the bounded domain $(-1,1)$ by means of the \textit{bilinear control} $\alpha (t,x),$ of the form
\begin{equation}
\label{Psemilineare}
\left\{\begin{array}{l}
\displaystyle{u_t-(a(x) u_x)_x =\alpha(t,x)u+ f(t,x,u)\,\quad \mbox{ in } \; Q_T \,:=\,(0,T)\times(-1,1) }\\ [2.5ex]
\displaystyle{a(x)u_x(t,x)|_{x=\pm 1} = 0\,\,\qquad\qquad\qquad\qquad\qquad\;\;\,\,\, t\in(0,T) }\\ [2.5ex]
\displaystyle{u(0,x)=u_0 (x) \,\qquad\qquad\qquad\qquad\quad\qquad\qquad\;\; \,x\in(-1,1)}~.
\end{array}\right.
\end{equation}
The equation in the \textit{Cauchy-Neumann}
problem 
above is a degenerate parabolic equation, because the diffusion coefficient, positive on $(-1,1),$ is allowed to vanish at the extreme points of $[-1,1]$.\\ 
The main physical motivations for studying degenerate parabolic problems with the above structure come from mathematical models in climate science as we explain below.
\subsection{Physical motivations: Climate models and degenerate parabolic equations}
Climate depends on various parameters such as temperature,
humidity, wind intensity, the effect of greenhouse gases, and so
on. It is also
affected by a complex set of interactions in the atmosphere,
oceans and continents, 
 that involve  physical, chemical,
geological and biological processes.\\
One of the first attempts to model the effects of the interaction between large ice masses and solar radiation on
climate is the one due, independently, to Budyko \cite{B1}, \cite{B2},
and Sellers \cite{S} 
 (see also \cite{D2
 }--\cite{DH}, \cite{H}, \cite{TV}, \cite{Prov1}, \cite{Prov2} and the
references therein). 
The Budyko-Sellers  model is an {\it energy balance model}, which studies the role played by
continental and oceanic areas of ice on climate change.
The effect of solar radiation on climate can be summarized in the following: 
$$\!\!\!\!\mbox{\textit{Heat variation}}=R_a-R_e+D,$$
where $ R_a$ is the \textit{absorbed
energy}, $ R_e$ is the \textit{emitted energy} and $ D$ is the \textit{diffusion part}.\\
The general formulation of the Budyko-Sellers model on a compact surface $\mathcal M$ without boundary is as follows
$$u_t-\Delta_{\mathcal M} u= R_a(t,X,u)-R_e(t,X,u),$$
\noindent where $u(t,X)$ is the distribution of temperature, $\Delta_{\mathcal M}$ is the classical Laplace-Beltrami operator,
$ R_a(t,X,u)=Q(t,X)\beta(X,u).$
In the above,
$ \,Q$ is the \textit{insolation} function,
that is, 
the incident solar radiation at the top of the atmosphere.
In annual models, when the time scale is long enough, one may assume that the insolation function doesn't depend on time $t,$ i.e. $Q= Q(X)$. But, when the time scale is smaller, as in seasonal
models, one uses a more realistic description of the incoming solar flux by assuming that $Q$ depends on $t,$ i.e. $Q = Q(t, X).$
$\,\beta$ 
is the \textit{coalbedo} function, that is, 1-\textit{albedo function}.
Albedo is the reflecting power of a surface. It is defined as the ratio of reflected radiation from the surface to incident radiation upon it. It may also be expressed as a percentage, and is measured on a scale from zero, for no reflecting power of a perfectly black surface, to 1, for perfect reflection of a white surface.\\
On $\mathcal M=\Sigma^2$ the Laplace-Beltrami operator is
$$\Delta_{\mathcal M}\,u=\frac1{\sin \phi}\Big\{\frac{\partial}{\partial \phi}\Big(\sin \phi \frac{\partial u}{\partial\phi}\Big)+\frac1{\sin \phi}\,\frac{\partial^2u}{\partial \lambda^2}\Big\},$$
where $\phi$ is the \textit{colatitude} and $\lambda$ is the \textit{longitude}.
In the one-dimensional Budyko-Sellers we
take the average of the temperature at $ x=\cos\phi,$ where $\phi$ is the \textit{colatitude}.
In such a model, the sea level mean
zonally averaged temperature $u(t, x)$ on the Earth, where $t$
denotes time, 
 satisfies the
following \textit{Cauchy-Neumann} degenerate problem 
 in
the bounded domain $(-1,1)$ 
\begin{equation*}
  \left\{\begin{array}{l}
\displaystyle{u_t-\big((1-x^2)u_x\big)_x=g(t,x)\,h(
u)+f(t,x,u), \qquad x\in (-1,1),}\\ [2.5ex]
\displaystyle{
(1-x^2)u(t,x)_{x}|_{x=\pm 1}=0, \qquad\qquad\qquad\qquad\quad\quad\;\, t\in(0,T) 
}~,
\end{array}\right.
\end{equation*}
where the meaning of this boundary condition will be clarified in Section 3.

\subsection{Mathematical motivations, contents and structure
}
Interest in degenerate parabolic equations dates back by almost a century. Significant contributions are due to Fichera's and Oleinik's studies (see e.g., respectively, \cite{Fichera} and \cite{OR}).\\
In control theory, boundary and interior locally distributed controls are usually employed (see, e.g., \cite{CMV2}
--\cite{CMV3}, \cite{FC}, \cite{FCZ}, \cite{FI}, \cite{BDDM1} and \cite{BDDM2}).
These controls are additive terms in the equation and have localized support. 
However, such models are unfit to study several interesting applied problems such as chemical reactions controlled by catalysts, and also smart materials, which are able to change their principal parameters under certain conditions.\\ 
Additive control problems for the Budyko-Sellers model have been studied by J.I.Diaz, in the work \cite{D1} (see also the interesting papers \cite{D2}, \cite{D} and \cite{DH}).\\
%
%
%
%
%
%
%
In the present work, the control action would take the form of a bilinear control, that is, a control given by a multiplicative coefficient.
General references for \textit{multiplicative controllability} 
are, e.g., \cite{K1}--\cite{KB}
 and \cite{BS}.
Our approach  is inspired by \cite{K} and \cite{CK}. 
In \cite{K}, A.Y. Khapalov studied the global nonnegative approximate controllability of the one dimensional \textit{non-degenerate} semilinear convection-diffusion-reaction equation governed in a bounded domain via bilinear control. 
In \cite{CK}, P. Cannarsa and A.Y. Khapalov derived the same approximate controllability property in suitable classes of functions that change sign.\\
Then, I considered, in collaboration with P. Cannarsa, the linear degenerate problem associated to $(\ref{Psemilineare})$  (i.e. when $f\equiv 0$) in two distinct kinds of set-up. 
Namely, first, in \cite{CF2} and \cite{CF3} we considered
  the \textit{weakly degenerate} problems (WD), that is, when $\frac{1}{a}\in L^1(-1,1);$ then,
 in \cite{CFproceedings1} and \cite{CF3} we considered the \textit{strongly degenerate} problems (SD), that is, when $\frac{1}{a}\not\in L^1(-1,1).$
Observe that the Budyko-Sellers model is an example of SD operator.\\
The WD case is somewhat similar to the uniformly parabolic case.
Indeed, it turns out that all functions in the domain of the corresponding differential operator possess a trace on the boundary, in spite of the fact that the operator degenerates at such points. In the WD case, we are able to study a 
\textit{Cauchy-Robin} boundary problem, and
we obtain a result of
global nonnegative approximate multiplicative controllability
 in $L^2(-1,1)$. So, we show that the above system can be steered, in
the space
of square-summable functions, from any nonzero, nonnegative initial
state
into any neighborhood of any desirable nonnegative target-state by
bilinear
static controls. Moreover, we extend the above result relaxing the
sign constraint on the
initial-state.\\
On the other hand, in the SD case 
one is forced to restrict to the Neumann type boundary conditions 
 (as in the Budyko-Sellers model).
Even in this case (SD linear case), we establish the global nonnegative approximate multiplicative controllability in $L^2(-1,1)$, after proving the compact embedding in $L^2(-1,1)$ of the weighted Sobolev space
$H^1_a(-1,1)
$ ($H^1_a(-1,1)$ is the space of all functions $u\in L^2(-1,1)$ such that $u$ is locally absolutely continuous in $(-1,1)$ and $\sqrt{a}\,u_x\in L^2(-1,1)$),
under the assumption $\xi_a\in L^1(-1,1),$ where $\xi_a(x)=\int_0^x \frac{ds}{a(s)}.$\\
In this paper we focus just on semilinear strongly degenerate problems, 
and we obtain the global nonnegative approximate controllability of $(\ref{Psemilineare})$ by bilinear piecewise static controls with initial state $u_0\in L^2(-1,1).$\\
 The technique of this paper is inspired by A.Y. Khapalov in \cite{K}, for uniformly parabolic equations. 
The main technical difficulty to overcome with respect to the uniformly parabolic case, 
is the fact that functions in $H^1_a(-1,1)$ need not be necessarily bounded when the operator is strongly degenerate.
Thus, some embedding results for weighted Sobolev spaces obtained in this article have proved decisive in reaching the desired controllability.
In particular, using the above embedding results and some results found in 
 \cite{CF3}, we obtain the well-posedness of $(\ref{Psemilineare})$ with initial state in $L^2(-1,1).$\\ 
In \cite{CF3}, we established the existence and uniqueness of solution to $(\ref{Psemilineare})$ with initial data in $H^1_a(-1,1)$ and, 
in order to obtain this result, we followed the classical method which consists in obtaining a local result by fixed point arguments, and then show that the solution is global in time by proving an a priori estimate (see \cite{CF3} and also Appendix B).
In fact, first, the nonlinear system $(\ref{Psemilineare})$ has been addressed in \cite{CF3}, assuming sufficient regularity on the initial data, that is, $u_0\in H^1_a(-1,1,)$ and obtaining an approximate controllability result in large time.
Such a regularity was necessary to develop the approach of \cite{CF3}, that was confined to strict solutions of $(\ref{Psemilineare})$ (see Section 3 for the definition of strict solution). On the other hand, the above procedure has some drawbacks, such as the restriction of the admissible target states to functions $u_d\in H^1_a(-1,1,)$ satisfying $\langle u_0, u_d\rangle_{1,a}>0.$   
The main purpose of this paper is to extend the analysis of \cite{CF3}, relaxing the regularity assumptions on $u_0,\,u_d$ to $u_0,\,u_d\in L^2(-1,1)$ and $u_0,\,u_d\geq0,$ with $u_0\neq0.$  \\
The structure of this paper is the following. Section 2 deals with the problem formulation and gives the main results. Section 3 deals with well-posedness for semilinear equations 
with initial state in $L^2(-1,1),$ and includes some new embedding results for weighted Sobolev spaces.
In Section 4, we prove 
 the global nonnegative approximate controllability of $(\ref{Psemilineare})$ via bilinear controls.
 Moreover, in Appendix A we recall the proof of a result for singular Sturm-Liouville problems obtained in \cite{CFproceedings1} 
 (this result is used in the proofs of the main results) 
and we remind a classical regularity result of the positive and negative part of a given function. 
 In Appendix B, we recall the proofs of the existence and uniqueness results for problem $(\ref{Psemilineare})$ with initial state in $H^1_a(-1,1),$ previously obtained by the author in 
\cite{CF3}.\\ 
Now, let us consider some open questions pertaining to this paper. First of all, 
in the future we intend to investigate similar problems in higher space dimensions on domains with specific geometries, first in the uniformly parabolic case (see, e.g., the preprint \cite{CFK}), then in the degenerate parabolic case.
Finally, once the above two issues have been addressed, we would like to extend our approach to other nonlinear systems of parabolic type, such as the systems of fluid dynamics (see, e.g., \cite{CFKP}).

\section{Problem formulation and main results}
This section gives the problem formulation and the main results of controllability of the system 
(\ref{Psemilineare}).
\subsection{Problem formulation}
In this paper, we consider the problem (\ref{Psemilineare})
\begin{equation*}
\left\{\begin{array}{l}
\displaystyle{u_t-(a(x) u_x)_x =\alpha(t,x)u+ f(t,x,u)\,\quad \mbox{ in } \; Q_T \,:=\,(0,T)\times(-1,1) }\\ [2.5ex]
\displaystyle{a(x)u_x(t,x)|_{x=\pm 1} = 0\,\,\qquad\qquad\qquad\qquad\qquad\;\;\,\,\, t\in(0,T) }\\ [2.5ex]
\displaystyle{u(0,x)=u_0 (x) \,\qquad\qquad\qquad\qquad\quad\qquad\qquad\;\; x\in(-1,1)}~,
\end{array}\right.
\end{equation*}
under the following assumptions:
 \begin{enumerate}
  \item[(A.1)] $u_0 \in L^2(-1,1);$ 
 \item[(A.2)] $\alpha \in L^\infty (Q_T);$ 
\item[(A.3)] $f:Q_T\times\R\rightarrow \R$ is such that
\begin{itemize}
\item $(t,x)\longmapsto f(t,x,u)$ is measurable $\forall u\in\R,$
\item $u\longmapsto f(t,x,u)$ is locally absolutely continuous for a.e. $(t,x)\in Q_T,$
\item
$t\longmapsto f(t,x,u)$ is locally absolutely continuous for a.e. $x\in (-1,1), \forall u\in\R,$\,(\footnote{
This assumption is used only for well-posedness, see \ref{AppA}.
})
\item there exist constants $\gamma_0\geq 0, \vartheta\in(1,3)$ and $\nu
\geq0$
 such that
\begin{equation}\label{Superlinearit}
|f(t,x,u)|\leq\gamma_0\,|u|^\vartheta, \mbox{ for a.e. } (t,x)\in Q_T, \forall u\in \R\,;
\end{equation}
\begin{equation}\label{fsigni}
-\nu
\big(1+|u|^{\vartheta-1}\big)\leq f_u(t,x,u)\leq\nu
,\; \mbox{ for a.e.  }  (t,x)\in Q_T, \forall u\in \R;
\end{equation}
\begin{equation*}
  f_t(t,x,u)\,u\geq-\nu\, 
  u^2,\; \mbox{ 
 for a.e.
 } \,(t,x)\in Q_T, \forall u\in \R;\,
 (^1)
\end{equation*}
\end{itemize}
  \item[(A.4)] $a \in C^1([-1,1])$ is such that
    $$a(x)>0, \,\, \forall \, x \in (-1,1),\quad a(-1)=a(1)=0,$$
    and, the function $\xi_a(x)=\int_0^x \frac{ds}{a(s)}$ satisfies the following
    \begin{equation}\label{Lintrod}
    \xi_a\in L^{q_\vartheta}(-1,1),
  \end{equation}
  where $$q_\vartheta=\max\Big\{\frac{1+\vartheta}{3-\vartheta}, 2\vartheta-1\Big\}.$$
\end{enumerate}
\begin{rem}\label{rem1}{
The inequalities 
 \eqref{fsigni}, in assumption $(A.3),$ imply the following conditions on the function $f$
 \begin{equation*}
\big|f_u(t,x,u)\big|\leq
\nu
(1+|u|^{\vartheta-1}), \; \mbox{ for a.e. } (t,x)\in Q_T, \forall u,v\in \R;
\end{equation*}
\begin{equation}\label{Remf}
\big(f(t,x,u)-f(t,x,v)\big)(u-v)\leq\nu
(u-v)^2, \; \mbox{ for a.e. } (t,x)\in Q_T, \forall u,v\in \R, (\footnote{
Since, for a.e. $(t,x)\in Q_T,$ $f(t,x,u)$ is locally absolutely continuous respect to $u,$ we have\\
$$\big(f(t,x,u)-f(t,x,v)\big)(u-v)=(u-v)\int^{u}_{v}f_u(t,x,\xi)d\xi\leq(u-v)\int^{u}_{v}\nu\,d\xi\leq\nu(u-v)^2,$$
$$\big|f(t,x,u)-f(t,x,v)\big|\leq\int^{max\{u,v\}}_{min\{u,v\}}|f_u(t,x,\xi)|d\xi\leq\nu\int^{max\{u,v\}}_{min\{u,v\}}(1+|\xi|^{\vartheta-1})d\xi\leq\nu(1+|u|^{\vartheta-1}+|v|^{\vartheta-1})|u-v|,$$
for a.e. $(t,x)\in Q_T,$ for every $u,\,v\in\R.$ 
})
\end{equation}
\begin{equation}\label{Remlip}
\big|f(t,x,u)-f(t,x,v)\big|\leq
\nu
(1+|u|^{\vartheta-1}+|v|^{\vartheta-1})|u-v|,\;\;\mbox{ for a.e. } (t,x)\in Q_T, \forall u,v\in \R. \,
(^2)
 \end{equation}
}
\begin{rem}\label{remmon}
We note that all the results of this paper hold true replacing the assumption that
 $u\longmapsto f(t,x,u)$ is locally absolutely continuously by the mere continuity of such a function, for a.e. $(t,x)\in Q_T.$ 
That is, in (A.3), it suffices to assume that:
\begin{itemize}
\item $(t,x,u)\longmapsto f(t,x,u)$ is a Carathodory function on $Q_T\times\R,$
\item $t\longmapsto f(t,x,u)$ is locally absolutely continuous for a.e. $x\in (-1,1), \forall u\in\R,$
\end{itemize}
and to substitute inequality \eqref{fsigni} by the two more general inequalities \eqref{Remf} and \eqref{Remlip}.
\end{rem}
\end{rem}
\begin{rem}
The equation in 
 the \textit{Cauchy-Neumann}
problem 
$(\ref{Psemilineare})$ is a degenerate parabolic equation because the diffusion coefficient, positive on $(-1,1),$ is allowed to vanish at the extreme points of $[-1,1]$.
In particular, since $\frac{1}{a}\not\in L^1(-1,1),$ this problem is \textit{strongly degenerate}.
A sufficient condition for 
this is that $a^\prime(\pm1)\neq0$ (if $a\in C^2([-1,1])$ the above condition is also necessary). \\
The principal part of the operator in $(\ref{Psemilineare})$ coincides with that of the Budyko-Sellers model for
$a(x)=1-x^2$. In this case, $\xi_a(x)=\frac{1}{2}\ln\left(\frac{1+x}{1-x}\right),$ %
  so  $\xi_a\in L^p(-1,1),$ for every $p\geq 1.$
 \end{rem}
\begin{rem}
The assumption (\ref{Remf}) is more general than the classical sign assumption $\int_{-1}^{1}f(t,x,u)u\,dx\leq 0$(\footnote{This integral condition is used 
in \cite{K}, in the uniformly parabolic case, but also there it can be generalized by a condition similar to (\ref{Remf}).}), indeed the last condition is equivalent to $f(t,x,u)\,u \leq 0, \mbox{ for a.e. } (t,x)\in Q_T,\;\;  \forall u\in \R.$
\end{rem}

\begin{ex}
An example of function $f$ that satisfies the assumptions $(A.3)$ is the following
$$f(t,x,u)=c(t,x)\min\{|u|^{\vartheta-1},1\}u-|u|^{\vartheta-1}u,$$
where $c$ 
is a Lipschitz continuous function. 
\end{ex}

\subsection{Main results}
We are interested in studying the nonnegative multiplicative controllability of
 $(\ref{Psemilineare})$  by the \textit{bilinear control} $\alpha (t,x) 
$. 
Let us start with the following definitions.
\begin{defn}\label{static}
We say that a 
function $\alpha\in L^\infty(Q_T)$ is {\it piecewise static}, if there exist $n\in\N,$ $c_i(x)\in L^\infty(-1,1)$ and $t_i\in (0,T), \,t_{i-1}<t_i,\, i=1,\dots,n$ with $t_0=0 \mbox{ and } t_n=T,$ 
such that $$\alpha(t,x)=c_1(x)\chi_{[t_{0},t_1]}(t)+\sum_{i=2}^n c_i(x)\chi_{(t_{i-1},t_i]}(t),$$ where $\chi_{[t_{0},t_1]}\,  \mbox{  and  }  \,\chi_{(t_{i-1},t_i]}$ are the indicator function of $[t_{0},t_1]$ and $(t_{i-1},t_i]$, respectively.
\end{defn}
\begin{defn}
We say that the system $(\ref{Psemilineare})$  is nonnegatively globally approximately controllable in $L^2(-1,1),$
if for every $
\varepsilon>0$ and for any nonnegative $
 u_0,\,u_d\in 
 L^2(-1,1),$ with $u_0\neq0$ 
there are a 
$ T=T(\varepsilon,u_0,u_d)\geq 0$
and a bilinear control 
$\alpha=
\alpha (t,x),\,\alpha
\in L^\infty(Q_T)$ such that for the corresponding strong solution (\footnote{See Definition \ref{strong}, for the precise definition of strong solutions.}) $u(t,x)$ of $(\ref{Psemilineare})$
we obtain
$$\|u(T,\cdot)-u_d\|_{L^2(-1,1)}\leq \varepsilon\,.$$
\end{defn}
\noindent The \textit{ nonnegative global approximate
controllability} results are obtained for the semilinear system
$(\ref{Psemilineare})$  in the following theorem. 
\begin{thm}\label{T1}
The semilinear system $(\ref{Psemilineare})$  is nonnegatively globally approximately controllable in $L^2(-1,1),$ by means of piecewise static bilinear controls $\alpha
.$
Moreover, the corresponding strong solution $(^4)$ to $(\ref{Psemilineare})$  remains nonnegative a.e. in $Q_T$.
\end{thm}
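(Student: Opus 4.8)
My plan is to follow the three–stage scheme that A.Y. Khapalov used in the non–degenerate case (\cite{K}, \cite{CK}) and that the author adapted to the \emph{linear} degenerate equation in \cite{CF3}: drive the solution into a neighbourhood of the ground state, renormalize, and then shape it onto a regularized target by a short pulse of a space–dependent static control; the new difficulty, forced by strong degeneracy, is that every bound on $f$ must pass through the weighted Sobolev embeddings of Section~3, since $H^1_a(-1,1)\not\subset L^\infty(-1,1)$. Two facts are used throughout. First, by \eqref{Superlinearit} one has $f(t,x,0)=0$, so $u\equiv0$ is a subsolution; combined with the one–sided Lipschitz bound \eqref{Remf}, a comparison argument for \eqref{Psemilineare} gives that any nonnegative $u_0$ produces a strong solution with $u\ge0$ a.e.\ in $Q_T$ under every bounded control — this is already the ``moreover'' part of the statement. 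Second, by the singular Sturm–Liouville result recalled in Appendix~A (which rests on the compact embedding $H^1_a(-1,1)\hookrightarrow\hookrightarrow L^2(-1,1)$), the Neumann operator $Au=-(a(x)u_x)_x$ has discrete spectrum $0=\lambda_1<\lambda_2\le\cdots\to+\infty$ and a complete orthonormal basis of eigenfunctions, with $\lambda_1$ simple and $L^2$–normalized eigenfunction the constant $\omega_1\equiv 1/\sqrt2$. Given $\varepsilon>0$ and nonnegative $u_0,u_d\in L^2(-1,1)$ with $u_0\neq0$, I would also fix a regularized target $u_d^\varepsilon\in C^2([-1,1])$ with $0<\delta\le u_d^\varepsilon\le M$ on $[-1,1]$ and $\|u_d-u_d^\varepsilon\|_{L^2(-1,1)}<\varepsilon/2$; note $a\,u_{d,x}^\varepsilon|_{x=\pm1}=0$ automatically because $a(\pm1)=0$.

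\emph{Stages 1 and 2 (contraction and renormalization).} On $[0,t_1]$ apply the static control $\alpha\equiv-\beta$ with $\beta$ large. Testing with $u$ and using $f(t,x,u)u\le\nu u^2$ (the case $v=0$ of \eqref{Remf}) gives $\|u(t)\|_{L^2}\le e^{(\nu-\beta)t}\|u_0\|_{L^2}$, and testing the component $v$ of $u$ orthogonal to $\omega_1$, together with the gap $\lambda_2$, adds to it an extra factor $e^{-\lambda_2 t}$; the term $f$ enters through $\|f(u)\|_{L^2}\le\gamma_0\|u\|_{L^{2\vartheta}}^\vartheta$, the embedding $H^1_a(-1,1)\hookrightarrow L^{2\vartheta}(-1,1)$ valid under \eqref{Lintrod}, an interpolation inequality and the $L^2\!\to\!H^1_a$ smoothing of the degenerate flow, and it is controlled by $\|u\|_{L^2}^{1+\kappa}$ for some $\kappa>0$, hence lower order. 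Passing to $w:=e^{\beta t}u$ and using once more the superlinearity of $f$, one checks $\langle u(t_1),\omega_1\rangle\ge e^{-\beta t_1}\big(\langle u_0,\omega_1\rangle-C/\beta\big)>0$ for $\beta$ large. So, choosing $t_1$ and then $\beta$, the solution reaches a state $u(t_1)$ with $\|u(t_1)\|_{L^2}$ and $\|u(t_1)-r\omega_1\|_{L^2}$ as small as desired, $r:=\langle u(t_1),\omega_1\rangle>0$. On $[t_1,t_2]$ I would then apply $\alpha\equiv\mu$ with a fixed $\mu\in(0,\lambda_2)$, stopping at the first time $t_2$ with $\langle u(t),\omega_1\rangle=1$ (finite, since while $u$ is close to a positive multiple of $\omega_1$ this quantity grows at rate $\ge\tfrac{\mu}{2}\langle u,\omega_1\rangle$). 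Because $\mu<\lambda_2$, on $[t_1,t_2]$ the transverse part of $u$ decays \emph{relative} to the growing $\omega_1$–component, while $\|u\|_{L^2}$ stays of order one so the nonlinearity remains subordinate ($\vartheta>1$); hence $u(t_2)$ can be made arbitrarily close to $\omega_1$ in $L^2(-1,1)$.

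\emph{Stage 3 (shaping) and conclusion.} Set $K:=\|\ln(\sqrt2\,u_d^\varepsilon)\|_{L^\infty(-1,1)}<\infty$ and, for small $h>0$, apply on $[t_2,t_2+h]$ the static control $\alpha(x)\equiv\frac1h\ln\!\big(u_d^\varepsilon(x)/\omega_1\big)\in L^\infty(-1,1)$. The solution stays close to the pointwise multiplier $u(t_2)\,e^{\alpha(x)t}$, whose value at $t=h$ is $u(t_2)\,u_d^\varepsilon/\omega_1$, i.e.\ $u_d^\varepsilon$ up to an $L^2$–error $O(\|u(t_2)-\omega_1\|_{L^2})$; the degenerate diffusion $(a u_x)_x$ and the term $f$ contribute $O(h)$ as $h\to0^+$ — here $a\in C^1([-1,1])$, $u_d^\varepsilon\in C^2([-1,1])$ and $a(\pm1)=0$ keep the gradients bounded up to the boundary and $f$ bounded — and the comparison of the nonlinear flow with the multiplier costs a further factor depending only on $K$, since $\|\alpha\|_{L^\infty}h=K$ is \emph{fixed}. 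Thus, taking $h$ small and then tightening the smallness reached at the end of Stages~1--2, one obtains $\|u(t_2+h)-u_d^\varepsilon\|_{L^2}<\varepsilon/2$, hence $\|u(T,\cdot)-u_d\|_{L^2}<\varepsilon$ with $T:=t_2+h$. The control obtained by concatenating the three stages is piecewise static with $n=3$ in the sense of Definition~\ref{static}, and $u\ge0$ a.e.\ in $Q_T$ by the comparison argument of the first paragraph applied on $[0,T]$.

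The step I expect to be the main obstacle is the control of the nonlinear term over the long time intervals of Stages~1--2: in the non–degenerate case $H^1\hookrightarrow L^\infty$ trivializes the bound on $\|f(u)\|$, whereas here one must go through $H^1_a(-1,1)\hookrightarrow L^{2\vartheta}(-1,1)$ — which is precisely what assumption \eqref{Lintrod}, with the exponent $q_\vartheta$, guarantees — combined with parabolic regularization, in order to show that the nonlinearity is a genuine lower–order perturbation of the linear degenerate flow. The other delicate point is the order in which the small parameters are chosen (first the $L^2$–approximation of $u_d$, hence $\delta,M,K$; then $\mu$; then $h$; then the smallness thresholds of Stages~1--2; then $t_1$; then $\beta$), which must be arranged so that no later choice spoils an earlier estimate.
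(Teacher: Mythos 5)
Your overall architecture (contract, align with the ground state, amplify, then shape by a short multiplicative pulse) is a legitimate variant of Khapalov's scheme, and both your nonnegativity argument and your Stage~3 multiplier analysis are sound in outline. The proof breaks down, however, at Stage~2. There you amplify from a small state $r\omega_1$ up to amplitude $1$ with a \emph{bounded} static control $\mu\in(0,\lambda_2)$, which forces the interval to have length $t_2-t_1\sim\mu^{-1}\ln(1/r)\to\infty$ as $r\to 0$. Over such an interval the superlinear term is not subordinate. Concretely, take $f(u)=-|u|^{\vartheta-1}u$ (admissible under (A.3)): the spatially constant solution issuing from $r\omega_1$ obeys $\dot y=\mu y-y^{\vartheta}$ and saturates at $y_*=\mu^{1/(\vartheta-1)}$, so for small $\mu$ your stopping time ``first $t$ with $\langle u(t),\omega_1\rangle=1$'' never occurs; and even when level $1$ is reached, Duhamel gives an accumulated nonlinear deviation of order $\gamma_0\int_{t_1}^{t_2}e^{\mu(t_2-\sigma)}\|u(\sigma)\|^{\vartheta}_{L^{2\vartheta}(-1,1)}\,d\sigma\sim\big((\vartheta-1)\mu\big)^{-1}$, a constant that does not tend to $0$ as $r\to0$. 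The observation that $|u|^\vartheta\le|u|$ for $|u|\le1$ only shows the nonlinearity is dominated by a linear term of size $\gamma_0$, which over a time $\mu^{-1}\ln(1/r)$ produces the divergent factor $r^{-\gamma_0/\mu}$. So the state at the end of Stage~2 need not be close to $\omega_1$, and this error cannot be absorbed by any later choice of parameters. (A secondary gap: your Stage~1 control of $\|f(u)\|$ invokes an $L^2\to H^1_a$ smoothing estimate for the \emph{nonlinear} degenerate flow that is nowhere established; Proposition \ref{MaxReg} needs $H^1_a$ data, and Lemma \ref{stima su w} carries a factor $e^{KT}$ that is useless over the long alignment time your Stage~1 requires.)

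The paper avoids this by never amplifying slowly: it keeps the state at amplitude $O(s)$ throughout, uses the singular Sturm--Liouville Lemma \ref{Autof} to turn the smoothed, strictly positive target $u_d$ itself into the principal eigenfunction of $A_0+\alpha_*I$ with $\alpha_*=-(au_{dx})_x/u_d$, steers $su_0+\delta_s$ to $s^{1+\eta}u_d$ over a long time during which the \emph{relative} nonlinear error vanishes like $s^{\vartheta-1-\eta+\eta K/\beta}$ precisely because the amplitude stays small, and only at the very end amplifies by the fixed factor $s_\varepsilon^{-(1+\eta)}$ over a short time $\tau_\varepsilon$ chosen \emph{after} $s_\varepsilon$, so that the nonlinear correction $C\tau_\varepsilon^{\rho}e^{\nu\vartheta\tau_\varepsilon}s_\varepsilon^{-2(1+\eta)}\big(\|u_d\|+1\big)^{\vartheta}$ is killed by $\tau_\varepsilon^{\rho}$. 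If you wish to keep your ``shape by a short pulse'' idea (which is closer in spirit to \cite{CK} than to the paper's Step~2), you must perform the shaping at the small scale, before amplification, and then amplify fast with a large constant control over a short time, exactly as in the paper's Step~3.
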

\noindent Moreover, we obtain the following result.
\begin{thm}\label{T1SL}
 For any $
 u_d\in 
 L^2(-1,1), u_d\geq 0$ and any $
 u_0\in L^2(-1,1)$ such that
\begin{equation}
\label{H2}
\langle u_0, u_d \rangle_{L^2(-1,1)}>0,
\end{equation}
for every $
\varepsilon>0,$ there are 
$ T=T(\varepsilon,u_0,u_d)\geq 0$
and a piecewise static bilinear control 
$\alpha=
\alpha (t,x),\,\alpha
\in L^\infty(Q_T)$ such that
$$\|u(T,\cdot)-u_d\|_{L^2(-1,1)}\leq \varepsilon\,,$$
where $u$ is the strong solution $(^4)$ to $(\ref{Psemilineare}).$
\end{thm}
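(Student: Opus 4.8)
The plan is to deduce Theorem~\ref{T1SL} from Theorem~\ref{T1} by prepending to the latter a preliminary control stage that drives the (possibly sign-changing) datum $u_0$ onto a \emph{nonnegative, nonzero} state, after which Theorem~\ref{T1} carries that state into the $\varepsilon$-ball around $u_d$. Since the concatenation of two piecewise static controls over adjacent time intervals is again piecewise static and the total time is finite, this suffices. The assumption \eqref{H2} is used only here: as $u_d\ge0$, the inequality $\langle u_0,u_d\rangle_{L^2(-1,1)}>0$ forces $u_0^+\not\equiv0$, i.e.\ $\{x\in(-1,1):u_0(x)>0\}$ has positive measure; indeed if $u_0\le0$ a.e., an energy estimate for $u^+$ using \eqref{fsigni} and $f(t,x,0)=0$ (a consequence of \eqref{Superlinearit}) shows the strong solution stays $\le0$ for every admissible control, so no nonzero nonnegative $u_d$ could be approached, and $u_d\equiv0$ is also excluded by \eqref{H2}.

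\emph{Preliminary stage: positivising $u_0$.} First I would fix $p\in C^2([-1,1])$ with $p>0$ on $[-1,1]$, equal to a constant near $x=\pm1$ (so that $a p_x|_{x=\pm1}=0$), and with $\langle u_0,p\rangle_{L^2(-1,1)}>0$; such a $p$ exists precisely because $u_0^+\not\equiv0$. Put $c_1:=-(a p_x)_x/p\in L^\infty(-1,1)$, so that $p$ becomes the principal eigenfunction, with eigenvalue $0$, of the self-adjoint operator $v\mapsto(a v_x)_x+c_1 v$ under the boundary condition $a v_x|_{x=\pm1}=0$, the rest of its spectrum lying in $(-\infty,-\rho]$ for some $\rho>0$ (discreteness follows from the compact embedding $H^1_a(-1,1)\hookrightarrow\hookrightarrow L^2(-1,1)$ recalled in the Introduction, simplicity and positivity of the ground state from the positivity-improving character of the generated semigroup). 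The control is then assembled from three static pieces. (i) On a short interval $[0,\delta]$ take $\alpha\equiv-\lambda$ with $\lambda$ large and $\delta$ small: using $f(t,x,u)u\le\nu u^2$ (from \eqref{fsigni}) in an energy estimate gives $\|u(\delta,\cdot)\|_{L^2}=\mu\|u_0\|_{L^2}$ with $\mu>0$ as small as desired, while by continuity $\langle u(\delta,\cdot),p\rangle>0$. (ii) On $[\delta,\delta+\tau]$ take $\alpha(t,x)=c_1(x)-\kappa$ with $\kappa$ large: the linear flow is $e^{-\kappa t}(\beta p+\zeta(t))$ with $\beta=\langle u(\delta,\cdot),p\rangle/\|p\|_{L^2}^2>0$ and $\|\zeta(t)\|_{L^2}\le e^{-\rho t}\|u(\delta,\cdot)\|_{L^2}$, so after rescaling by $\beta e^{-\kappa\tau}$ it is $L^2$-close to $p$ once $\tau$ is large; the weighted Sobolev embedding $H^1_a(-1,1)\hookrightarrow L^{2\vartheta}(-1,1)$ of Section~3 (this is where \eqref{Lintrod} is used), the one-sided bounds \eqref{fsigni}, and the superlinearity $\vartheta>1$ then show that, after $\tau$ is fixed and $\mu$ is chosen small enough, the semilinear trajectory tracks the linear one, so $u(\delta+\tau,\cdot)=\beta e^{-\kappa\tau}(p+r)$ with $\|r\|_{L^2}$ arbitrarily small relative to $\min p>0$. (iii) On a final short interval $[\delta+\tau,T_1]$ take $\alpha\equiv0$: since the degenerate Neumann heat semigroup fixes constants (hence $e^{sA}p\ge\min p$ a.e.) and maps $L^2(-1,1)$ into $L^\infty(-1,1)$ for $s>0$ (again via the embedding results of Section~3), the state $u_1:=u(T_1,\cdot)$ satisfies $u_1\ge c_0>0$ a.e.; in particular $u_1\in L^2(-1,1)$, $u_1\ge0$, $u_1\not\equiv0$.

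\emph{Conclusion.} Applying Theorem~\ref{T1} with initial state $u_1$, target $u_d$ and accuracy $\varepsilon$ furnishes $T_2=T_2(\varepsilon,u_1,u_d)\ge0$ and a piecewise static control on $[T_1,T_1+T_2]$ whose strong solution satisfies $\|u(T_1+T_2,\cdot)-u_d\|_{L^2(-1,1)}\le\varepsilon$. Setting $T:=T_1+T_2$ and letting $\alpha\in L^\infty(Q_T)$ be the concatenation of the pieces above gives the statement; existence and uniqueness of the global strong solution on each subinterval, hence on $[0,T]$, with merely $L^2$ data at each junction, is precisely what the well-posedness theory of Section~3 (built on the new embeddings and on \cite{CF3}) provides.

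\emph{Main obstacle.} Everything above is routine for the non-degenerate heat equation; the hard part is that in the strongly degenerate regime $H^1_a(-1,1)\not\hookrightarrow L^\infty(-1,1)$, so the superlinear term $f$ cannot be controlled by the energy norm and even global well-posedness is not automatic. Both the well-posedness with $L^2$ data and the quantitative assertion that the linear flow dominates in the small-amplitude regime used in step (ii) rely on the weighted Sobolev inequality $\|u\|_{L^{2\vartheta}(-1,1)}\lesssim\|u\|_{H^1_a(-1,1)}$ valid under \eqref{Lintrod}, combined with the one-sided bounds \eqref{fsigni} (equivalently \eqref{Remf}--\eqref{Remlip}) and with $\vartheta\in(1,3)$, which is exactly what makes $f$ negligible against the linear terms when $\|u\|$ is small; a further delicate point requiring the same embedding theory is the $L^2\!\to\!L^\infty$ smoothing of the degenerate Neumann semigroup used in step (iii) to pass from $L^2$-closeness to a pointwise positive state.
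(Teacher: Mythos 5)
Your route is genuinely different from the paper's, and it is worth contrasting the two before pointing out where yours breaks down. The paper does not reduce Theorem~\ref{T1SL} to Theorem~\ref{T1} at all: it observes that the only place where the sign condition $u_0\ge 0$, $u_0\neq 0$ enters the proof of Theorem~\ref{T1} is in guaranteeing, via \eqref{4autof}, that the first Fourier coefficient satisfies $z_1(s)\to\frac{1}{\|u_d\|}\langle u_0,u_d\rangle>0$ in \eqref{z1a}; under hypothesis \eqref{H2} this limit is positive by assumption, so STEPS 1--3 of the proof of Theorem~\ref{T1} go through verbatim with a sign-changing $u_0$ (the target $u_d$ is still approximated by a strictly positive smooth function, and \eqref{H2} passes to that approximation by continuity of the scalar product). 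Your observation that \eqref{H2} forces $u_0^+\not\equiv 0$ is correct but is not the mechanism the paper needs; what matters is the positivity of $\langle u_0,u_d\rangle$ itself, which feeds directly into the choice \eqref{beta}--\eqref{9} of $t_2(s)$.

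The genuine gap in your argument is step (iii). To invoke Theorem~\ref{T1} you must hand it an initial state that is nonnegative \emph{a.e.}, but after step (ii) you only have $u(\delta+\tau,\cdot)=\beta e^{-\kappa\tau}(p+r)$ with $\|r\|_{L^2}$ small, which does not prevent $p+r$ from being negative on a set of positive measure. To repair this you assert that the degenerate Neumann semigroup maps $L^2(-1,1)$ into $L^\infty(-1,1)$ "via the embedding results of Section~3". No such result is in the paper: Section~3 gives $H^1_a(-1,1)\hookrightarrow L^{2p}(-1,1)$ for finite $p$ under \eqref{Lintrod}, and the introduction explicitly stresses that in the strongly degenerate case functions of $H^1_a(-1,1)$ need not be bounded, which is precisely the technical obstruction the whole paper is organised around. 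An $L^2\!\to\!L^\infty$ smoothing estimate (ultracontractivity) for $e^{tA_0}$ is plausible under \eqref{Lintrod} — one could try to derive a Nash-type inequality from Lemma~\ref{sob1} and run the standard iteration — but that is a substantial additional theorem, not a citation, and without it your intermediate state cannot be certified nonnegative and Theorem~\ref{T1} cannot be applied. (The rest of your construction — steps (i) and (ii), the existence of a smooth positive $p$ with $\langle u_0,p\rangle>0$, the spectral facts about $A_0+c_1I$ via Lemma~\ref{Autof} and Lemma~\ref{spectrum}, and the control of the nonlinearity through Lemma~\ref{stima su w} with $\vartheta>1$ — is essentially a re-derivation of STEPS 1--2 of the paper's proof of Theorem~\ref{T1} with $p$ in place of $u_d$, and is sound in outline; the irony is that if you simply replace $p$ by a smooth positive approximation of $u_d$ there, you recover the paper's argument and the positivisation stage, together with its problematic step (iii), becomes unnecessary.)
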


\section{Well-posedness for nonlinear problems}
In this section, first we obtain embedding results for weighted Sobolev spaces (Section 3.2), then we prove the existence and uniqueness of the strong solution to nonlinear problem $(\ref{Psemilineare})$ (Section 3.5).
\subsection{The function spaces ${\cal{B}}(Q_T)$ and ${\cal{H}}(Q_T)$}
In order to deal with the well-posedness of nonlinear degenerate problem $(\ref{Psemilineare})$, it is necessary to introduce the weighted Sobolev spaces $H^1_a(-1,1)$ and $H^2_a(-1,1)$ (see also \cite{CFproceedings1} and \cite{CF3}).\\
We define
\begin{align*}
H^1_a(-1,1)&:=\{u\in L^2(-1,1)|\,u \text{ is locally absolutely continuous in } (-1,1) \text{ and }\;\sqrt{a}\,u_x\in L^2(-1,1)\},\\
H^2_a(-1,1)&:=\{u\in H^1_a(-1,1)| \, au_x \in H^1 (-1,1)\},
\end{align*}
respectively with the following norms
$$\|u\|_{1,a}^2:=\|u\|_{L^2(-1,1)}^2+|u|_{1,a}^2 \mbox{ and } \|u\|_{2,a}^2:=\|u\|_{1,a}^2+\|(au_x)_x\|^2_{L^2(-1,1)},$$
where $|u|_{1,a}^2:=\|\sqrt{a}u_x\|_{L^2(-1,1)}^2$ is a seminorm.\\
 In \cite{CMV2}, see Proposition 2.1 and the Appendix, the authors prove the following result 
  (see also Lemma 2.5 in \cite{CMP}).
 \begin{prop}\label{caratH2}
For every $u\in H^2_a(-1,1)$ we have
$$\lim_{x\rightarrow\pm1}a(x)u_x(x)
=0
\qquad\qquad\text{ and }\qquad\qquad
au\in H^1_0 (-1,1)\;\; (\footnote{
$H^1_0(-1,1)=\{u\in L^2(-1,1)| u_x\in L^2(-1,1) \text{ and } u(\pm1)=0\}.$
}).
$$ 
 \end{prop}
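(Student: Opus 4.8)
\textbf{Proof plan for Proposition \ref{caratH2}.}
The plan is to exploit the structure of the weighted space $H^2_a(-1,1)$ directly: if $u\in H^2_a(-1,1)$, then by definition $v:=a u_x$ lies in $H^1(-1,1)$, hence $v$ is (after modification on a null set) absolutely continuous on the closed interval $[-1,1]$ and in particular has well-defined finite limits at $x=\pm 1$. So the only real content of the first assertion is to show that these boundary values of $v=au_x$ are actually zero. I would argue by contradiction: suppose $\lim_{x\to 1} a(x)u_x(x)=:\ell\neq 0$. Then near $x=1$ one has $u_x(x)\sim \ell/a(x)$, and since the degeneracy is strong, i.e. $1/a\notin L^1(-1,1)$ (a hypothesis recorded in the remarks, following e.g. from $a'(1)\neq 0$), this forces $\int^1 |u_x|\,dx=+\infty$ and, more to the point, $\int^1 a(x)|u_x(x)|^2\,dx \ge c\int^1 |\ell|^2/a(x)\,dx=+\infty$, contradicting $\sqrt a\,u_x\in L^2(-1,1)$, which is built into membership in $H^1_a\supset H^2_a$. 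The symmetric argument at $x=-1$ completes the first claim. (One must be a little careful that $\ell\neq 0$ really yields a one-sided lower bound $|u_x(x)|\ge |\ell|/(2a(x))$ on an interval $(1-\delta,1)$; this is immediate from the definition of the limit.)

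For the second assertion, $au\in H^1_0(-1,1)$, I would first check $au\in H^1(-1,1)$. Since $a\in C^1([-1,1])$ and $u$ is locally absolutely continuous on $(-1,1)$, the product $au$ is locally absolutely continuous on $(-1,1)$ with $(au)_x=a'u+au_x$ a.e. Now $a'u\in L^2(-1,1)$ because $a'$ is bounded and $u\in L^2$, and $au_x=v\in H^1(-1,1)\subset L^2(-1,1)$; also $au\in L^2(-1,1)$ since $a$ is bounded. Hence $(au)_x\in L^2(-1,1)$ and $au\in H^1(-1,1)$, so $au$ has a continuous representative on $[-1,1]$. Finally, the boundary values vanish: $a(\pm 1)=0$ and $u\in L^2$ near the endpoints forces $\lim_{x\to\pm1}a(x)u(x)=0$ — concretely, $|a(x)u(x)|\le \|a\|_\infty^{1/2}\,|a(x)|^{1/2}|u(x)|$ won't do directly, so instead I would note that the continuous function $w:=au$ on $[-1,1]$ must have $w(1)=\lim_{x\to1}a(x)u(x)$; if this limit were a nonzero number $m$, then $|u(x)|\ge |m|/(2a(x))$ near $1$, and since $1/a\notin L^2(-1,1)$ either (indeed $1/a\notin L^1$ already implies, together with boundedness of $a$, that $1/a^2\notin L^1$), this contradicts $u\in L^2(-1,1)$. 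Thus $w(\pm1)=0$, i.e. $au\in H^1_0(-1,1)$.

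The main obstacle is purely the endpoint analysis: making rigorous that a nonzero limit of $au_x$ (resp. $au$) at an endpoint is incompatible with the weighted integrability conditions $\sqrt a\,u_x\in L^2$ (resp. $u\in L^2$), which is exactly where the strong-degeneracy hypothesis $1/a\notin L^1(-1,1)$ is used. Everything else — absolute continuity of $au_x$ from $H^1(-1,1)$, the product rule, and the membership $au\in H^1(-1,1)$ — is routine. Since the paper attributes this to \cite{CMV2} (Proposition 2.1 and its Appendix) and \cite{CMP} (Lemma 2.5), I would simply cite those for the detailed verification and present the contradiction arguments above as the conceptual core.
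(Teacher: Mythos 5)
Your proof is correct: the paper itself does not prove Proposition \ref{caratH2} but only cites \cite{CMV2} and \cite{CMP}, and your two endpoint contradiction arguments (a nonzero limit $\ell$ of $au_x$ at $\pm1$ would force $\int a\,u_x^2\,dx\ge \tfrac{\ell^2}{4}\int a^{-1}dx=+\infty$, contradicting $\sqrt{a}\,u_x\in L^2$, and a nonzero limit of the continuous function $au\in H^1(-1,1)$ would force $\int u^2\,dx\ge c\int a^{-2}dx=+\infty$, contradicting $u\in L^2$) are exactly the standard ones used in those references, with the routine steps ($au_x\in H^1\subset AC([-1,1])$, the product rule giving $au\in H^1(-1,1)$) handled correctly. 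You also rightly isolate strong degeneracy, $1/a\notin L^1$ near each endpoint, as the essential hypothesis --- without it the first assertion is actually false (e.g.\ $a(x)=(1-x)^{1/2}$, $u(x)=2(1-x)^{1/2}$ near $x=1$ gives $u\in H^2_a$ with $au_x\equiv-1$ there) --- which is consistent with the paper's standing strongly degenerate setting even though it is not literally contained in $(A.4)$.
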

$H^1_a(-1,1)$
and $H^2_a(-1,1)$ are Hilbert spaces with their natural scalar products, 
and we denote 
with $\langle\cdot,\cdot\rangle_{1,a}$ the scalar product of $H^1_a(-1,1)$.\\
\noindent In the following, we will sometimes use $\|\cdot\|,\;\langle\cdot,\cdot\rangle$ 
 instead of
$\|\cdot\|_{L^2(-1,1)}, \langle\cdot,\cdot\rangle_{L^2(-1,1)}$, respectively, and $\|\cdot\|_\infty$ instead of $\|\cdot\|_{L^\infty(Q_T)}.$  \\

Given $T>0,$ let us define the function spaces:
$$\mathcal{B}(Q_T):=C([0,T];L^2(-1,1))\cap L^2(0,T;H^1_a (-1,1))$$
with the following norm
\begin{equation*}
\label{normaB}
 \|u\|^2_{\mathcal{B}(Q_T)}= \sup_{t\in
[0,T]}\|u(t,\cdot)\|^2
+2\int^T_{0}\int^1_{-1}a(x)u^2_x
dx\,dt\,,
\end{equation*}
and
$$
{\cal{H}}(Q_T):=L^{2}(0,T;
H^2_a(-1,1))\cap H^{1}(0,T;L^2(-1,1))\cap C([0,T];H^{1}_a(-1,1))
$$
with the following norm 
\begin{equation*}
\label{normaH}
\|u\|^2_{\mathcal{H}(Q_T)}=
 \sup_{
 [0,T]}\left(\|u\|
 ^2+\|\sqrt{a}u_x\|
 ^2\right)+\int_0^T\left(\|u_t\|
 ^2+\|(au_x)_x\|
 ^2\right)\,dt.
 \;\;(\footnote{
 It's well known that this norm is equivalent to the Hilbert norm $$\displaystyle\||u|\|^2_{\mathcal{H}(Q_T)}=
 \int_0^T\left(\|u\|
 ^2+\|\sqrt{a}u_x\|
 ^2+\|u_t\|
 ^2+\|(au_x)_x\|
 ^2\right)\,dt.$$
 })
\end{equation*}
\begin{rem}
We observe that ${\cal{B}}(Q_T)$ and ${\cal{H}}(Q_T)$ are Banach 
spaces 
(see, e.g., \cite{edw}). 
\end{rem}

\subsection{Some embedding theorems for weighted Sobolev spaces }
Let $\xi_a(x)=\int_0^x\frac{1}{a(s)}\,ds,$
then we have the following
\begin{lem}\label{sob1}
If $\xi_a
\in L^p (-1,1),$ for some $p\geq 1,$ then
\begin{equation*}
H^{1}_a (-1,1)\hookrightarrow L^{2p}(-1,1)\,.
\end{equation*}
Moreover,
$$\|u\|_{L^{2p}(-1,1)}\leq c\,
\|u\|_{1,a},$$
where $c$ is a positive constant.
\end{lem}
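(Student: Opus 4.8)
The plan is to prove the embedding $H^1_a(-1,1)\hookrightarrow L^{2p}(-1,1)$ by exploiting the substitution built from $\xi_a$, which flattens the degenerate weight into a constant one, reducing the problem to a weighted one-dimensional Hardy/Sobolev inequality. The key observation is that for $u\in H^1_a(-1,1)$, the function $u$ is locally absolutely continuous in $(-1,1)$, so for $x\in[0,1)$ we may write $u(x)-u(0)=\int_0^x u_x(s)\,ds$. Applying Cauchy--Schwarz with the splitting $u_x(s)=\sqrt{a(s)}\,u_x(s)\cdot\frac{1}{\sqrt{a(s)}}$ yields
\begin{equation*}
|u(x)-u(0)|\le\Big(\int_0^x a(s)u_x(s)^2\,ds\Big)^{1/2}\Big(\int_0^x\frac{ds}{a(s)}\Big)^{1/2}\le |u|_{1,a}\,\xi_a(x)^{1/2},
\end{equation*}
with the symmetric estimate for $x\in(-1,0]$ using $|\xi_a(x)|$. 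This is the heart of the matter: it converts the weighted gradient bound into a pointwise bound on $u$ in terms of $\xi_a^{1/2}$.

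First I would establish the pointwise bound above, and note that $|u(0)|$ itself is controlled by $\|u\|_{1,a}$ — for instance by integrating the pointwise inequality over a fixed subinterval away from the endpoints (where $a$ is bounded below) to see that $\|u\|_{L^2}$ controls $|u(0)|$ up to $|u|_{1,a}$, or more directly by an averaging argument. From $|u(x)|\le |u(0)|+|u|_{1,a}|\xi_a(x)|^{1/2}$ I would then raise to the power $2p$, use the elementary inequality $(A+B)^{2p}\le 2^{2p-1}(A^{2p}+B^{2p})$, and integrate over $(-1,1)$:
\begin{equation*}
\int_{-1}^1|u(x)|^{2p}\,dx\le 2^{2p-1}\Big(2|u(0)|^{2p}+|u|_{1,a}^{2p}\int_{-1}^1|\xi_a(x)|^p\,dx\Big).
\end{equation*}
The hypothesis $\xi_a\in L^p(-1,1)$ makes the last integral finite, giving $\|u\|_{L^{2p}}^{2p}\le C\big(|u(0)|^{2p}+|u|_{1,a}^{2p}\big)\le C'\|u\|_{1,a}^{2p}$, which is the desired inequality with $c=(C')^{1/(2p)}$.

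The one genuinely delicate point — the step I expect to require the most care — is controlling $|u(0)|$ (or the value of $u$ at some interior reference point) purely in terms of $\|u\|_{1,a}$, since a priori $u$ is only \emph{locally} absolutely continuous and the pointwise bound above only constrains the \emph{oscillation} of $u$, not its size. The clean way around this is: from $|u(x)-u(0)|\le |u|_{1,a}|\xi_a(x)|^{1/2}$ we get, for $x$ in a fixed interval $I=[-1/2,1/2]$ say, that $|u(0)|\le |u(x)|+|u|_{1,a}\,\kappa$ where $\kappa=\sup_{x\in I}|\xi_a(x)|^{1/2}<\infty$ (finite because $a>0$ is continuous, hence bounded below, on $I$); squaring, integrating over $I$, and using $\int_I|u|^2\le\|u\|_{L^2}^2$ yields $|u(0)|^2\le C(\|u\|_{L^2}^2+|u|_{1,a}^2)\le C\|u\|_{1,a}^2$. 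Feeding this back completes the argument. One should also remark that $2p\ge 2$ so $L^{2p}(-1,1)\subset L^2(-1,1)$ and there is no integrability obstruction at the endpoints beyond the one already handled by the $\xi_a\in L^p$ assumption; continuity of the embedding is exactly the displayed norm inequality.
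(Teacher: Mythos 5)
Your proposal is correct and follows essentially the same route as the paper: the same Cauchy--Schwarz splitting $u_x=\sqrt{a}\,u_x\cdot a^{-1/2}$ giving $|u(x)-u(0)|\le\sqrt{|\xi_a(x)|}\,|u|_{1,a}$, the same decomposition $|u|\le|u(0)|+|u(x)-u(0)|$ with the elementary inequality $(A+B)^{2p}\le 2^{2p-1}(A^{2p}+B^{2p})$, and the same use of $\xi_a\in L^p$ to integrate. The only (harmless) variation is in bounding $|u(0)|$: you integrate over a fixed compact subinterval where $a$ is bounded below, whereas the paper integrates the triangle inequality over all of $(-1,1)$ using $\sqrt{|\xi_a|}\in L^1(-1,1)$; both yield $|u(0)|\le c\,\|u\|_{1,a}$.
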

\begin{proof}
Let $u\in H^1_a(-1,1).$ 
First, for every $x\in(-1,1),$ we have the following estimate
\begin{equation}\label{S1.2}
\!|u(x)-u(0)|= \left|\int_0^x u'(s)\, ds\right|\,
\leq
\left|\int_0^x a(s)|u'(s)|^2 ds\right|^\frac{1}{2}\!\left|\int_0^x \frac{1}{a(s)} ds\right|^\frac{1}{2}\!\!\!\leq\sqrt{|\xi_a(x)|}\:|u|_{1,a}.
\end{equation}
Moreover, keeping in mind that $\xi_a\in L^p(-1,1)$, we have
\begin{equation*}
\int_{-1}^1|u(0)|\,dx\leq\int_{-1}^1|u(x)-u(0)|\,dx+\int_{-1}^1|u(x)|\,dx
\leq\,|u|_{1,a}\int_{-1}^1\sqrt{
|\xi_a(x)|
}\,dx+\sqrt{2}\|u\|.
\end{equation*}
Thus,
\begin{equation}\label{S1.3}
|u(0)|\leq\,c_a\,|u|_{1,a}+
\frac{\sqrt{2}}{2}
\|u\|
\leq\max\Big\{c_a,\frac{\sqrt{2}}{2}\Big\}\|u\|_{1,a}
, \qquad \mbox{ where } c_a=\frac{1}{2}\int_{-1}^1\sqrt{
|\xi_a(x)|
}\,dx.
\end{equation}
Finally, by (\ref{S1.2}) and (\ref{S1.3}) we have (\footnote{
We remember that, for every $a,b\in[0,+\infty),$ the following numerical inequality holds true:
$$(a+b)^q\leq 2^{q-1}(a^q+b^q), \text{ for every } q\geq1.$$
}) 
\begin{multline*}
\int_{-1}^1 |u(x)|^{2p}\,dx\leq 
2^{2p-1}
\int_{-1}^1 \left(|u(x)-u(0)|^{2p}+|u(0)|^{2p}\right)\,dx\\
\leq 
2^{2p-1}
\,|u|^{2p}_{1,a}\!\int_{-1}^1 
|\xi_a(x)|
^p
\,dx+
2^{2p}\Big(\max\Big\{c_a,\frac{\sqrt{2}}{2}\Big\}\Big)^{2p}\|u\|^{2p}_{1,a}.
\end{multline*}
Since $\xi_a\in L^p(-1,1)$, applying H\"{o}lder inequality (\footnote{
We note that 
$\int_{-1}^1|\xi_a(x)|^p\,dx\leq
\big(\int_{-1}^1\,dx\big)^{1-2p}\big(\int_{-1}^1|\xi_a(x)|^{\frac{1}{2}}\,dx\big)^{2p}=2^{1-2p}(2c_a)^{2p}=2c_a^{2p}.$
}),
we deduce 
$$\int_{-1}^1 |u(x)|^{2p}\,dx\leq 2^{2p}c_a^{2p}
\,|u|^{2p}_{1,a}+2^{2p}\Big(\max\Big\{c_a,\frac{\sqrt{2}}{2}\Big\}\Big)^{2p}\|u\|^{2p}_{1,a}\leq 2^{2p}\Big(\max\Big\{c_a,\frac{\sqrt{2}}{2}\Big\}\Big)^{2p}\|u\|^{2p}_{1,a}.$$
\end{proof}




\begin{lem}\label{sob2}
Let $T>0.$ 
If $\xi_a\in L^{\frac{p}{2-p}}(-1,1)$ for some $p\in\left[
1
,2\right),$ then
\begin{equation*}
L^{2}(0,T;H^{1}_a (-1,1))\cap L^\infty(0,T;L^2(-1,1))\subset L^{2p}(Q_T)\;\;(\footnote{
\;\;
$\displaystyle
\|u\|^2_{L^{2}(0,T;H^{1}_a (-1,1))}\!\!=\!
 \int_0^T\left(\|u\|^2+\|\sqrt{a}u_x\|^2\right)\,dt
\;\;\;\;\text{   and   }\;\;\;\; \|u\|^2_{L^\infty(0,T;L^2(-1,1))}\!=\!\ess_{
[0,T]}\|u\|^2\,.
$
}) 
\end{equation*}
and 
$$\|u\|_{L^{2p}(Q_T)}\leq c\,T^{\frac{1}{2p}\left(1-\frac{p}{2}\right)}\|u\|_{{\cal{B}}(Q_T)}
\,,$$
where $c$ is a positive constant.
\end{lem}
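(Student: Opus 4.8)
The plan is to interpolate between the two endpoint spaces using Hölder's inequality in the time variable, reducing the claim to Lemma~\ref{sob1} applied slicewise in $x$ and to the trivial $L^\infty$-in-time bound on the $L^2$-norm. First I would observe that the hypothesis $\xi_a\in L^{p/(2-p)}(-1,1)$ is exactly the condition needed to apply Lemma~\ref{sob1} with exponent $p/(2-p)$ in place of $p$, giving for a.e.\ $t\in(0,T)$ the spatial embedding $H^1_a(-1,1)\hookrightarrow L^{2p/(2-p)}(-1,1)$ with
$$\|u(t,\cdot)\|_{L^{2p/(2-p)}(-1,1)}\le c\,\|u(t,\cdot)\|_{1,a}.$$
(Here one checks $2\cdot\frac{p}{2-p}=\frac{2p}{2-p}$, and $p\in[1,2)$ makes this a legitimate exponent $\ge 2$.)

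Next I would set up the interpolation. Write $q:=2p/(2-p)$ for the spatial exponent available from $H^1_a$, and note $q>2$. For a.e.\ fixed $t$, interpolate the $L^{2p}$-norm between $L^2$ and $L^q$: there is $\theta\in(0,1)$ with $\frac{1}{2p}=\frac{1-\theta}{2}+\frac{\theta}{q}$, and then
$$\|u(t,\cdot)\|_{L^{2p}(-1,1)}\le \|u(t,\cdot)\|_{L^2(-1,1)}^{1-\theta}\,\|u(t,\cdot)\|_{L^q(-1,1)}^{\theta}\le c\,\|u(t,\cdot)\|^{1-\theta}\,\|u(t,\cdot)\|_{1,a}^{\theta}.$$
Raising to the $2p$-th power and integrating in $t$, I would bound $\|u(t,\cdot)\|^{2p(1-\theta)}$ by its essential supremum (pulling it out of the integral) and apply Hölder in $t$ to the remaining factor $\int_0^T\|u(t,\cdot)\|_{1,a}^{2p\theta}\,dt$: the exponent $2p\theta$ should come out to be exactly $2$ after choosing $\theta$, so that $\int_0^T\|u(t,\cdot)\|_{1,a}^2\,dt$ appears, which is controlled by $\|u\|_{\mathcal B(Q_T)}^2$, and the Hölder conjugate factor produces a power of $T$. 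A direct computation of $\theta$ from $\frac{1}{2p}=\frac{1-\theta}{2}+\frac{\theta(2-p)}{2p}$ gives $\theta=1/p$, hence $2p\theta=2$ as hoped, and $1-\theta=(p-1)/p$; collecting the $\sup$-factor as $\|u\|_{L^\infty(0,T;L^2)}^{2(p-1)}\le\|u\|_{\mathcal B(Q_T)}^{2(p-1)}$ and tracking the Hölder exponent yields the stated power $T^{\frac{1}{2p}(1-\frac{p}{2})}$ after taking $2p$-th roots.

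The only genuinely delicate point is bookkeeping: making sure the three exponent identities (the one defining $q$ from Lemma~\ref{sob1}, the interpolation identity defining $\theta$, and the Hölder split in $t$) are mutually consistent so that precisely $\int_0^T\|u\|_{1,a}^2\,dt$ survives and the exponent of $T$ matches $\frac{1}{2p}\bigl(1-\frac p2\bigr)$; and handling the boundary case $p=1$ (where $q=2$, $\theta=1$, and the statement degenerates to the obvious $L^2(Q_T)$ bound with the correct power of $T$) as well as noting that the full-measure-in-$t$ validity of the slicewise inequality plus Fubini legitimizes integrating. I expect no real obstacle beyond this arithmetic, since both ingredients — the slicewise Sobolev embedding and the $L^\infty_tL^2_x$ control — are already in hand.
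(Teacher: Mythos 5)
Your strategy --- slicewise Sobolev embedding from Lemma \ref{sob1} with exponent $\frac{p}{2-p}$, interpolation of $L^{2p}$ between $L^2$ and $L^{2p/(2-p)}$ in $x$, then H\"older in $t$ --- is exactly the paper's proof (the paper writes the interpolation step as a H\"older split of $|u|^p\cdot|u|^p$ with exponents $\frac{2}{p}$ and $\frac{2}{2-p}$, which is the same thing). However, your computation of the interpolation parameter is wrong, and the error is not harmless. From
$$\frac{1}{2p}=\frac{1-\theta}{2}+\frac{\theta(2-p)}{2p},$$
multiplying by $2p$ gives $1=p(1-\theta)+\theta(2-p)=p+2\theta(1-p)$, hence $\theta=\tfrac12$ (for $p\neq1$), not $\theta=1/p$. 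With $\theta=1/p$ the exponent identity fails (test $p=\tfrac32$: the right-hand side equals $\tfrac{5}{18}$ while $\tfrac{1}{2p}=\tfrac{6}{18}$), so the inequality $\|u\|_{L^{2p}}\le\|u\|_{L^2}^{1-\theta}\|u\|_{L^{q}}^{\theta}$ with that $\theta$ is simply false --- it already fails for constant functions on $(-1,1)$. Your bookkeeping is also internally inconsistent: if $2p\theta$ were exactly $2$, the factor $\int_0^T\|u\|_{1,a}^2\,dt$ would appear with no H\"older in $t$ at all, and then no power of $T$ would be produced, contradicting the target estimate.

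With the correct $\theta=\tfrac12$ everything falls into place and you recover the paper's argument verbatim: $\|u(t,\cdot)\|_{L^{2p}(-1,1)}^{2p}\le c\,\|u(t,\cdot)\|^{p}\,\|u(t,\cdot)\|_{1,a}^{p}$, pull out $\|u\|_{L^\infty(0,T;L^2(-1,1))}^{p}$, and apply H\"older in $t$ with exponents $\frac{2}{2-p}$ and $\frac{2}{p}$ to $\int_0^T\|u\|_{1,a}^{p}\,dt$, which yields $T^{1-\frac p2}\bigl(\int_0^T\|u\|_{1,a}^{2}\,dt\bigr)^{p/2}$ and hence the stated power $T^{\frac{1}{2p}\left(1-\frac p2\right)}$ after taking $2p$-th roots.
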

\begin{proof}
  For every $u\in L^{2}(0,T;H^{1}_a (-1,1))$ we have
\begin{
equation*}
\int_{Q_T} |u|^{2p}\,dx\,dt = \int_0^T \int_{-1}^1 |u|^{p}\,|u|^{p}\,dx\,dt
\leq \int_0^T\left(\int_{-1}^1|u|^2\,dx\right)^{\frac{p}{2}}\,\left(\int_{-1}^1|u|^{\frac{2p}{2-p}}\,dx\right)^{\!\!\!\!\frac{2-p}{2}}\!\!\!\!dt.
\end{
equation*}
Recalling that
 $u\in L^\infty(0,T;L^2(-1,1)),$ by Lemma \ref{sob1} we obtain
$$\int_{Q_T} |u|^{2p}\,dx\,dt\leq \|u\|^p_{L^\infty(0,T;L^2(-1,1))}\int_0^T \|u\|_{L^\frac{2p}{2-p}(-1,1)}^p\,dt
\leq c\,
\|u\|^p_{L^\infty(0,T;L^2(-1,1))}\int_0^T \|u\|_{1,a
}^p\,dt\:.
$$
Moreover, using H\"older's inequality, 
 we have
\begin{equation*}
\int_0^T \|u\|_{H^1_a(-1,1)}^p\,dt\leq \left(\int_0^T\,dt\right)^{1-\frac{p}{2}}\left(\int_0^T\,\|u\|_{1,a
}^2\,dt\right)^{\frac{p}{2}}\leq T^{1-\frac{p}{2}}\|u\|_{L^2\left(0,T;H^1_a(-1,1)\right)}^p\,.
\end{equation*}
From the last two inequalities, it follows that 
\begin{equation*}
\int_{Q_T} |u|^{2p}\,dx\,dt\leq c\,
T^{1-\frac{p}{2}} \,\|u\|_{L^2\left(0,T;H^1_a(-1,1)\right)}^p\, \|u\|^p_{L^\infty(0,T;L^2(-1,1))}
\leq c\,
\,T^{1-\frac{p}{2}}\|u\|^{2p}_{{{\cal{B}}}(Q_T)}.
\end{equation*}
\end{proof}
Taking $p=\frac{\vartheta+1}{2},\:
1
\leq\vartheta<3,$ in the previous lemma, we obtain the following corollary.
\begin{cor}\label{sob2cor}
Let $T>0.$ 
If $\xi_a\in L^{\frac{1+\vartheta}{3-\vartheta}}(-1,1)$ for some 
$\vartheta\in\left[
1
,3\right),$
 then
\begin{equation*}
\cal{B}(Q_T)\subset L^{1+\vartheta}(Q_T)
\end{equation*}
and 
$$\|u\|_{L^{1+\vartheta}(Q_T)}\leq c\,T
^{\frac{3-\vartheta}{4(1+\vartheta)}}\|u\|_{\cal{B}(Q_T)}\,,
$$
where $c$ is a positive constant.
\end{cor}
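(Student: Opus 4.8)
The plan is to derive Corollary \ref{sob2cor} directly from Lemma \ref{sob2} by a single substitution, checking that the hypotheses line up. First I would set $p=\frac{\vartheta+1}{2}$, observing that as $\vartheta$ ranges over $[1,3)$ the value $p$ ranges over $[1,2)$, so the constraint $p\in[1,2)$ required by Lemma \ref{sob2} is exactly met. Next I would compute the exponent appearing in the integrability hypothesis on $\xi_a$: with this choice of $p$ one has $2-p=\frac{3-\vartheta}{2}$, hence $\frac{p}{2-p}=\frac{\vartheta+1}{3-\vartheta}$, so the assumption $\xi_a\in L^{\frac{1+\vartheta}{3-\vartheta}}(-1,1)$ is precisely the hypothesis $\xi_a\in L^{p/(2-p)}(-1,1)$ of Lemma \ref{sob2}.

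Then I would identify the conclusion: $2p=1+\vartheta$, so $L^{2p}(Q_T)=L^{1+\vartheta}(Q_T)$, and the inclusion $L^2(0,T;H^1_a(-1,1))\cap L^\infty(0,T;L^2(-1,1))\subset L^{1+\vartheta}(Q_T)$ follows. The point that needs a word of justification is that $\mathcal{B}(Q_T)=C([0,T];L^2(-1,1))\cap L^2(0,T;H^1_a(-1,1))$ is contained in the space on the left-hand side of Lemma \ref{sob2}, which is clear since $C([0,T];L^2(-1,1))\subset L^\infty(0,T;L^2(-1,1))$ with $\|u\|_{L^\infty(0,T;L^2)}\le\|u\|_{\mathcal{B}(Q_T)}$, and likewise the $L^2(0,T;H^1_a)$ component is controlled by $\|u\|_{\mathcal{B}(Q_T)}$. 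Hence $\mathcal{B}(Q_T)\subset L^{1+\vartheta}(Q_T)$.

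Finally, for the quantitative bound I would just read off the exponent of $T$ from Lemma \ref{sob2}: the estimate there gives $\|u\|_{L^{2p}(Q_T)}\le c\,T^{\frac{1}{2p}(1-\frac{p}{2})}\|u\|_{\mathcal{B}(Q_T)}$, and substituting $2p=1+\vartheta$ and $1-\frac{p}{2}=\frac{3-\vartheta}{4}$ yields the exponent $\frac{1}{1+\vartheta}\cdot\frac{3-\vartheta}{4}=\frac{3-\vartheta}{4(1+\vartheta)}$, which is exactly the power of $T$ claimed. There is no real obstacle here — the corollary is a pure bookkeeping specialization of the lemma — so the only thing to be careful about is matching the three arithmetic identities ($p=\frac{\vartheta+1}{2}$, $\frac{p}{2-p}=\frac{1+\vartheta}{3-\vartheta}$, $\frac{1}{2p}(1-\frac{p}{2})=\frac{3-\vartheta}{4(1+\vartheta)}$) and noting the endpoint behaviour of the range of $\vartheta$.
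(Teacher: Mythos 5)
Your proposal is correct and is exactly the paper's argument: the corollary is obtained by taking $p=\frac{\vartheta+1}{2}$ in Lemma \ref{sob2}, and all three arithmetic identities you check ($\frac{p}{2-p}=\frac{1+\vartheta}{3-\vartheta}$, $2p=1+\vartheta$, $\frac{1}{2p}\left(1-\frac{p}{2}\right)=\frac{3-\vartheta}{4(1+\vartheta)}$) match. The only additional observation you make, that $\mathcal{B}(Q_T)\subset L^2(0,T;H^1_a(-1,1))\cap L^\infty(0,T;L^2(-1,1))$ with control of the norms, is implicit in the paper and correctly justified.
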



\begin{lem}\label{lemma sob3}
Let $T>0,\: p\geq1.$ 
If $\xi_a\in L^{2p-1}(-1,1),$ 
then
\begin{equation*}
H^{1}(0,T;L^2(-1,1))\cap L^\infty(0,T;H^{1}_a(-1,1))\subset L^{2p}(Q_T)\;\;\;\;(\footnote{
\;\;$\displaystyle
%
\|u\|^2_{H^{1}(0,T;L^2(-1,1))}\!\!=\!
 \sup_{
 [0,T]}\|u\|
 ^2+\int_0^T\|u_t\|
 ^2\,dt
\;\;\text{  and  }\;\; \|u\|^2_{L^\infty(0,T;H^{1}_a(-1,1))}\!=\!\sup_{
[0,T]}
\left(\|u\|^2+\|\sqrt{a}u_x\|^2\right)\,.
$
}
)
\end{equation*}
and 
$$
\|u\|_{L^{2p}(Q_T)}
\leq c\,
 T^{\frac{1}{2p}}\,\|u\|^{\frac{1}{2p}}_{H^1(0,T;L^2(-1,1))}\,\|u\|_{L^\infty(0,T;H^1_a(-1,1))}^{1-\frac{1}{2p}},
$$
where $c$ is a positive constant.
\end{lem}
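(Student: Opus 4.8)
The plan is to mimic the proof of Lemma~\ref{sob2}, but now exploiting the time-regularity ($u\in H^1(0,T;L^2)$) in place of the $L^2$-in-time regularity, so that the Sobolev embedding of Lemma~\ref{sob1} is used ``in space'' at each fixed time, with the $L^\infty$-in-time bound on $\|u(t,\cdot)\|_{1,a}$ doing the heavy lifting. Concretely, for $u\in L^\infty(0,T;H^1_a(-1,1))$ we have $u(t,\cdot)\in H^1_a(-1,1)$ for a.e.\ $t$, so by Lemma~\ref{sob1} with $p$ in place of $p$ (note $\xi_a\in L^{2p-1}\subset L^p$ for $p\ge 1$, but one actually wants the sharper exponent) we get $u(t,\cdot)\in L^{2p}(-1,1)$ with $\|u(t,\cdot)\|_{L^{2p}(-1,1)}\le c\|u(t,\cdot)\|_{1,a}$. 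This immediately yields $\|u\|_{L^{2p}(Q_T)}\le c\,T^{1/(2p)}\|u\|_{L^\infty(0,T;H^1_a(-1,1))}$, which already proves the inclusion; the point of the lemma, however, is the sharper estimate with the factor $\|u\|^{1/(2p)}_{H^1(0,T;L^2)}\|u\|^{1-1/(2p)}_{L^\infty(0,T;H^1_a)}$, so one must interpolate.

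The interpolation step is the heart of the matter. I would write, pointwise in $x$,
\[
|u(t,x)|^{2p}=|u(t,x)|\cdot|u(t,x)|^{2p-1},
\]
bound the factor $|u(t,x)|^{2p-1}$ in an $L^\infty_x$-type way using the fundamental-theorem estimate \eqref{S1.2}--\eqref{S1.3} from Lemma~\ref{sob1} (which gives $\|u(t,\cdot)\|_{L^\infty(-1,1)}^{2p-1}\le c\,\|u(t,\cdot)\|_{1,a}^{2p-1}$ precisely when $\xi_a\in L^{2p-1}$, since then $\sqrt{|\xi_a|}\in L^{2(2p-1)}\cap\dots$ — this is exactly where the exponent $2p-1$ enters), and then estimate
\[
\int_{Q_T}|u|^{2p}\,dx\,dt\le \int_0^T\|u(t,\cdot)\|_{L^\infty(-1,1)}^{2p-1}\Big(\int_{-1}^1|u(t,x)|\,dx\Big)\,dt\le \sqrt2\,c\int_0^T\|u(t,\cdot)\|_{1,a}^{2p-1}\,\|u(t,\cdot)\|\,dt.
\]
Pulling $\|u(t,\cdot)\|_{1,a}^{2p-1}$ out in $L^\infty_t$ leaves $\int_0^T\|u(t,\cdot)\|\,dt$, which by Cauchy--Schwarz (or just by the sup bound) is controlled by $T\sup_t\|u(t,\cdot)\|$; but to get the $H^1(0,T;L^2)$ factor with the small power $1/(2p)$ one instead interpolates $\|u(t,\cdot)\|$ itself: the $L^\infty(0,T;L^2)$-norm controlled by the full $\|u\|_{L^\infty(0,T;H^1_a)}$, while the $H^1(0,T;L^2)$ piece contributes via a splitting $|u|^{2p}=|u|^{1/(p)\cdot\text{something}}$ — cleanest is to write $\int_{Q_T}|u|^{2p}=\int_0^T\|u(t)\|_{L^{2p}}^{2p}$ and use the one-dimensional interpolation $\|u(t)\|_{L^{2p}}^{2p}\le \|u(t)\|_{L^\infty}^{2p-1}\|u(t)\|_{L^1}$, then Hölder in $t$ to distribute one factor of $\|u(t)\|$ to $L^1_t$ bounded by $T^{1-1/(2p)}\cdot(\int\|u(t)\|^{2p}dt)^{1/(2p)}$ and recognize $\int_0^T\|u(t)\|^2dt\le T\sup\|u(t)\|^2$ against the $H^1_t$ norm.

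The main obstacle I anticipate is bookkeeping the exponents so that the final power of $\|u\|_{H^1(0,T;L^2)}$ comes out to exactly $1/(2p)$ and the complementary power of $\|u\|_{L^\infty(0,T;H^1_a)}$ to $1-1/(2p)$, together with the correct power $T^{1/(2p)}$ of the time horizon; getting the Hölder conjugate exponents and the interpolation weights consistent requires care, but it is a routine (if fiddly) computation once the decomposition $|u|^{2p}=|u|\cdot|u|^{2p-1}$ and the $L^\infty_x$-bound via $\xi_a\in L^{2p-1}$ are in place. A secondary technical point is justifying that the $H^1(0,T;L^2)$ regularity legitimately gives the $L^\infty(0,T;L^2)$ control used implicitly (embedding $H^1(0,T;L^2)\hookrightarrow C([0,T];L^2)$), so that all the pointwise-in-$t$ manipulations are valid — but this is standard and the norm $\|u\|^2_{H^1(0,T;L^2(-1,1))}$ as defined in the excerpt already includes the $\sup_{[0,T]}\|u\|^2$ term, so there is nothing to prove there.
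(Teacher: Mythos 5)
There is a genuine gap: the central mechanism you invoke --- a bound $\|u(t,\cdot)\|_{L^\infty(-1,1)}\le c\,\|u(t,\cdot)\|_{1,a}$ deduced from \eqref{S1.2}--\eqref{S1.3} under the hypothesis $\xi_a\in L^{2p-1}(-1,1)$ --- is false in the strongly degenerate setting of this paper. The pointwise estimate \eqref{S1.2} reads $|u(x)-u(0)|\le\sqrt{|\xi_a(x)|}\,|u|_{1,a}$, so a uniform bound on $u$ would require $\xi_a\in L^\infty(-1,1)$; but here $\frac1a\notin L^1(-1,1)$, so $\xi_a$ blows up at $x=\pm1$ (e.g.\ $\xi_a=\frac12\ln\frac{1+x}{1-x}$ for $a=1-x^2$), and functions of $H^1_a(-1,1)$ need not be bounded --- the introduction singles this out as the main technical difficulty of the strongly degenerate case. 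Consequently your H\"older step with exponents $(\infty,1)$, namely $\int_{-1}^1|u|\,|u|^{2p-1}dx\le\|u(t,\cdot)\|^{2p-1}_{L^\infty(-1,1)}\int_{-1}^1|u|\,dx$, is not available. What $\xi_a\in L^{2p-1}(-1,1)$ actually buys, via Lemma \ref{sob1}, is the \emph{integrated} embedding $H^1_a(-1,1)\hookrightarrow L^{2(2p-1)}(-1,1)=L^{4p-2}(-1,1)$, and the correct replacement for your step is Cauchy--Schwarz in $x$:
$\int_{-1}^1|u|\,|u|^{2p-1}\,dx\le\|u(t,\cdot)\|_{L^2(-1,1)}\,\|u(t,\cdot)\|^{2p-1}_{L^{4p-2}(-1,1)}\le c\,\|u(t,\cdot)\|_{L^2(-1,1)}\,\|u(t,\cdot)\|^{2p-1}_{1,a}$. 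This is exactly how the paper proceeds, starting from the same decomposition $|u|^{2p}=|u|\cdot|u|^{2p-1}$ that you chose.

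Once that spatial estimate is in place, the final bookkeeping is much simpler than the interpolation-in-$t$ you sketch (which, as written, does not close: $\int_0^T\|u(t)\|\,dt\le T^{1-\frac1{2p}}\bigl(\int_0^T\|u(t)\|^{2p}dt\bigr)^{\frac1{2p}}$ does not connect to the $H^1(0,T;L^2)$ norm). No H\"older in $t$ is needed: bound $\|u(t,\cdot)\|_{L^2}\le\sup_{[0,T]}\|u\|\le\|u\|_{H^1(0,T;L^2(-1,1))}$ (the sup is part of that norm as defined in the footnote) and $\|u(t,\cdot)\|_{1,a}\le\|u\|_{L^\infty(0,T;H^1_a(-1,1))}$, which yields $\int_{Q_T}|u|^{2p}\,dx\,dt\le c\,T\,\|u\|_{H^1(0,T;L^2(-1,1))}\,\|u\|^{2p-1}_{L^\infty(0,T;H^1_a(-1,1))}$; the exponents $\frac1{2p}$ and $1-\frac1{2p}$ and the factor $T^{\frac1{2p}}$ in the statement are then just the $2p$-th root of this inequality.
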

\begin{proof}
  For every $u\in H^{1}(0,T;L^2(-1,1))\cap L^\infty(0,T;H^{1}_a(-1,1))
$ we have
\begin{equation*}
\int_{Q_T} |u|^{2p}\,dx\,dt = \int_0^T \int_{-1}^1 |u|\,|u|^{2p-1}\,dx\,dt
\leq \int_0^T\left(\int_{-1}^1|u|^2\,dx\right)^{\frac{1}{2}}\,\left(\int_{-1}^1|u|^{4p-2}\,dx\right)^{\frac{1}{2}}dt\,.
\end{equation*}
Recalling that
 $u\in H^{1}(0,T;L^2(-1,1)),$ by the Lemma \ref{sob1} and since $\xi^a\in L^{2p-1}(-1,1)$, we obtain
$$\int_{Q_T} |u|^{2p}\,dx\,dt\leq \|u\|_{H^{1}(0,T;L^2(-1,1))}\int_0^T \|u\|_{L^{4p-2}(-1,1)}^{2p-1}\,dt
\leq c\,
\|u\|_{H^{1}(0,T;L^2(-1,1))}\int_0^T \|u\|_{1,a
}^{2p-1}\,dt\:.
$$
From the last inequality, it follows that 
\begin{equation*}
\int_{Q_T} |u|^{2p}\,dx\,dt
\leq c\,
T
 \,\|u\|_{H^1\left(0,T;L^2(-1,1)\right)}\, \|u\|^{2p-1}_{L^\infty(0,T;H^{1}_a(-1,1))}.\,
\end{equation*}
\end{proof}
By Lemma \ref{lemma sob3} one directly obtains the following.
\begin{cor}\label{sob3}
Let $T>0,\: \vartheta\geq1.$ 
If $\xi_a\in L^{2\vartheta-1}(-1,1),$ 
then
\begin{equation*}
{\cal{H}}(Q_T)\subset L^{2\vartheta}(Q_T)
\end{equation*}
and 
$$
\|u\|_{L^{2\vartheta}(Q_T)}
\leq c\,
 T^{\frac{1}{2\vartheta}}\,\|u\|_{{\cal{H}}(Q_T)},
$$
where $c$ is a positive constant.
\end{cor}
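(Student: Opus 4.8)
The plan is to deduce this corollary directly from Lemma \ref{lemma sob3} by specializing $p=\vartheta$, so that the substantive work has already been done and what remains is only to match the function spaces and their (equivalent) norms. First I would set $p=\vartheta\geq 1$ in Lemma \ref{lemma sob3}: the hypothesis $\xi_a\in L^{2\vartheta-1}(-1,1)$ is exactly the hypothesis $\xi_a\in L^{2p-1}(-1,1)$ needed there, so the lemma applies and yields
\begin{equation*}
H^{1}(0,T;L^2(-1,1))\cap L^\infty(0,T;H^{1}_a(-1,1))\subset L^{2\vartheta}(Q_T)
\end{equation*}
together with the quantitative bound
\begin{equation*}
\|u\|_{L^{2\vartheta}(Q_T)}\leq c\, T^{\frac{1}{2\vartheta}}\,\|u\|^{\frac{1}{2\vartheta}}_{H^1(0,T;L^2(-1,1))}\,\|u\|_{L^\infty(0,T;H^1_a(-1,1))}^{1-\frac{1}{2\vartheta}}.
\end{equation*}

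Next I would observe that $\mathcal{H}(Q_T)$ embeds continuously, with embedding norm at most $1$, into each of the two spaces on the left. Indeed $\mathcal{H}(Q_T)=L^{2}(0,T;H^2_a(-1,1))\cap H^{1}(0,T;L^2(-1,1))\cap C([0,T];H^{1}_a(-1,1))$; comparing with the norm $\|u\|^2_{H^{1}(0,T;L^2(-1,1))}=\sup_{[0,T]}\|u\|^2+\int_0^T\|u_t\|^2\,dt$ used in the lemma, the term $\int_0^T\|u_t\|^2\,dt$ is one of the summands of $\|u\|^2_{\mathcal{H}(Q_T)}$, while $\sup_{[0,T]}\|u\|^2\leq \sup_{[0,T]}(\|u\|^2+\|\sqrt{a}u_x\|^2)$ is bounded by another summand; hence $\|u\|_{H^{1}(0,T;L^2(-1,1))}\leq\|u\|_{\mathcal{H}(Q_T)}$. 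Likewise, from the $C([0,T];H^{1}_a(-1,1))$ component one has $\|u\|^2_{L^\infty(0,T;H^{1}_a(-1,1))}=\sup_{[0,T]}(\|u\|^2+\|\sqrt{a}u_x\|^2)\leq\|u\|^2_{\mathcal{H}(Q_T)}$. Substituting both estimates into the displayed bound and using $\tfrac{1}{2\vartheta}+(1-\tfrac{1}{2\vartheta})=1$ gives $\|u\|_{L^{2\vartheta}(Q_T)}\leq c\, T^{\frac{1}{2\vartheta}}\|u\|_{\mathcal{H}(Q_T)}$, which is the claim.

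There is essentially no obstacle here: the statement is a routine corollary of Lemma \ref{lemma sob3}, and the only point requiring care is the exact definition of the norms involved, in particular that the $\sup$-in-time quantity is genuinely controlled by $\|\cdot\|_{\mathcal{H}(Q_T)}$ — which it is, precisely because $\mathcal{H}(Q_T)$ carries the $C([0,T];H^1_a(-1,1))$ factor. Alternatively, rather than tracking the exponents one may simply majorize the interpolation-type product in Lemma \ref{lemma sob3} by $c\,T^{\frac{1}{2\vartheta}}\|u\|_{\mathcal{H}(Q_T)}$ via the elementary inequality $A^{\theta}B^{1-\theta}\leq\max\{A,B\}\leq A+B$ with $\theta=\frac{1}{2\vartheta}$.
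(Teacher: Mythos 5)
Your proposal is correct and coincides with the paper's argument: the paper states that Corollary \ref{sob3} follows directly from Lemma \ref{lemma sob3}, and your specialization $p=\vartheta$ together with the observation that both $\|u\|_{H^1(0,T;L^2(-1,1))}$ and $\|u\|_{L^\infty(0,T;H^1_a(-1,1))}$ are dominated by $\|u\|_{\mathcal{H}(Q_T)}$ (so the interpolation product collapses to $\|u\|_{\mathcal{H}(Q_T)}$ since the exponents sum to $1$) is exactly the intended deduction, merely written out in full.
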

\subsection{Existence and uniqueness of solutions of linear problems}
First,
we recall an existence uniqueness result for the linear problems corresponding to $(\ref{Psemilineare})$, obtained in \cite{CMP} (see also \cite{ACF} and \cite{CF3}),
defined by
 \begin{equation}\label{D(A0)}
   \left\{\begin{array}{l}
\displaystyle{D(A_0)=
%
H^2_a (-1,1)
 }\\ [2.5ex]
\displaystyle{A_0u=(au_x)_x\,, \,
\,\, \forall \,u \in D(A_0)}\,.
\end{array}\right.
 \end{equation}
For the following linear results it is sufficient that the diffusion coefficient
  $a(\cdot)$ satisfy 
  the assumption 
   $(A.4)$ with $\xi_a
\in L^{1}(-1,1),$ instead of the condition $(\ref{Lintrod})$.  
Next, given $\alpha\in L^\infty (-1,1),$ let us introduce the operator
 \begin{equation}\label{D(A)}
 \left\{\begin{array}{l}
\displaystyle{D(A)=D(A_0)
}\\ [2.5ex]
\displaystyle{A = A_0 + \alpha I\, 
}\,.
\end{array}\right.
 \end{equation}

We consider
the following linear problem 
in the Hilbert space $L^2(-1,1)$
\begin{equation}\label{Ball}
 \left\{\begin{array}{l}
\displaystyle{u^\prime(t)=A\,u(t)\,+g(t),\qquad  t>0 }\\ [2.5ex]
\displaystyle{u(0)=u_0\, 
}~,
\end{array}\right.
\end{equation}
where $A$ is the operator in (\ref{D(A)}), $g\in L^1(0,T;L^2(-1,1)),\,u_0\in L^2(-1,1)$.\\ 

We recall that a \textit{weak solution} of (\ref{Ball}) is a function $u\in C^0([0,T];L^2(-1,1))$ such that for every $v\in D(A^*)$ ($A^*$ denotes the adjoint of A) the function $\langle u(t),v\rangle$ is absolutely continuous on $[0,T]$
and
$$\frac{d}{dt}\langle u(t),v\rangle=\langle u(t),A^*v\rangle\,+\langle g(t),v\rangle,$$
for almost all $t\in [0,T]$ (see \cite{Ba}). 
\\
For every $\alpha\in L^\infty
(-1,1)$ (\footnote{By repeated applications of 
this result, one can obtain an existence and uniqueness result when $\alpha$ is piecewise static (
see Definition \ref{static}). The same result holds for $\alpha\in L^\infty(Q_T),$ but for the purposes of the present paper the piecewise static case will suffice.}) 
and every $u_0 \in L^2(-1,1)$, there exists a unique weak solution
of (\ref{Ball}), which is given by the following representation 
$e^{tA}u_0+\int_0^t e^{(t-s)A} g(s)\,ds,$ \,$t\in[0,T]$ \,(see also \cite{CFproceedings1}).\\
Now, using a \textit{maximal regularity} result
in the Hilbert space $L^2(-1,1)$(\footnote{By \textit{maximal regularity} we mean that $u^\prime$ and $Au$ have the same regularity of $g$.}), 
by Theorem 3.1 in Section 3.6.3 of \cite{BDDM1}, pp. $79-82,$ we derive the following result (see also \cite{CV} and \cite{CF3}).
\begin{prop}\label{MaxReg}
Given $T>0$ and $g\in L^2(0,T;L^2(-1,1))
$\,(\footnote{We observe that $L^2(0,T;L^2(-1,1))=L^{2}(Q_{T}).$}). 
For every $\alpha\in L^\infty 
(-1,1)(^{11})$ and every $u_0 \in H^1_a(-1,1)$, there exists a unique
solution
$u\in{\cal{H}}(Q_T) 
$
of (\ref{Ball}). 
Moreover, a positive constant $C_0(T)$ exists (nondecreasing in $T$), such that the following inequality holds
$$\|u\|_{{\cal{H}}(Q_T)}\leq C_0(T)\left[\|u_0\|_{1,a}+\|g\|_{L^{2}(Q_{T})}\right].$$
\end{prop}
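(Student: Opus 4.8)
The statement to be proved is Proposition~\ref{MaxReg}: given $T>0$, $g\in L^2(Q_T)$, $\alpha\in L^\infty(-1,1)$ and $u_0\in H^1_a(-1,1)$, the Cauchy problem \eqref{Ball} has a unique solution $u\in\mathcal H(Q_T)$ with the asserted estimate. The plan is to reduce everything to the abstract maximal-regularity theorem (Theorem~3.1, Section~3.6.3 of \cite{BDDM1}) by checking that the operator $A_0$ in \eqref{D(A0)} generates an analytic semigroup on $L^2(-1,1)$ and is (up to a shift) self-adjoint and negative, so that the square root of $-A_0$ has domain $H^1_a(-1,1)$; the bounded perturbation $\alpha I$ is then absorbed without affecting maximal regularity.

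\textbf{Step 1: the unperturbed operator.} First I would recall from \cite{CMP} (quoted just before the statement, also \cite{ACF}, \cite{CF3}) that $A_0=(au_x)_x$ with domain $H^2_a(-1,1)$ is a nonpositive self-adjoint operator on $L^2(-1,1)$ — this uses integration by parts together with Proposition~\ref{caratH2}, which guarantees $a(x)u_x(x)\to0$ at $x=\pm1$ so that no boundary terms appear and the Neumann condition is natural. Being self-adjoint and $\le 0$, $A_0$ generates an analytic contraction semigroup, and the interpolation identity $D((-A_0)^{1/2})=H^1_a(-1,1)$ holds with $\|(-A_0)^{1/2}u\|^2=\langle -A_0u,u\rangle=|u|_{1,a}^2$ on the domain, extended by density; this is exactly the compatibility between the initial datum space $H^1_a$ and the operator that the abstract theorem requires. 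Here the hypothesis $\xi_a\in L^1(-1,1)$ is what makes $H^1_a(-1,1)$ the right form domain (it even embeds compactly in $L^2$, as recalled in the introduction), so the functional-analytic setup is consistent.

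\textbf{Step 2: the perturbed operator and the abstract theorem.} Since $\alpha\in L^\infty(-1,1)$, multiplication by $\alpha$ is a bounded operator on $L^2(-1,1)$, so $A=A_0+\alpha I$ has the same domain $H^2_a(-1,1)$, still generates an analytic semigroup, and a bounded perturbation does not destroy maximal $L^2$-regularity. Then I would invoke Theorem~3.1 of \cite[Sec.~3.6.3]{BDDM1}: for $u_0$ in the trace space $D((-A)^{1/2})=H^1_a(-1,1)$ and $g\in L^2(0,T;L^2(-1,1))$, the mild solution $e^{tA}u_0+\int_0^t e^{(t-s)A}g(s)\,ds$ — which we already know is the unique weak solution — actually lies in $H^1(0,T;L^2)\cap L^2(0,T;D(A))$ and belongs to $C([0,T];H^1_a)$, i.e. in $\mathcal H(Q_T)$, with an estimate $\|u'\|_{L^2(L^2)}+\|Au\|_{L^2(L^2)}\le C\big(\|u_0\|_{1,a}+\|g\|_{L^2(Q_T)}\big)$. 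Recognizing $\|(au_x)_x\|_{L^2(Q_T)}=\|A_0u\|_{L^2(Q_T)}\le\|Au\|_{L^2(Q_T)}+\|\alpha\|_\infty\|u\|_{L^2(Q_T)}$ and controlling $\sup_{[0,T]}(\|u\|^2+\|\sqrt a u_x\|^2)$ by the standard energy identity (multiply the equation by $u$ and by $-A_0u$, integrate, use Gronwall) converts this into the stated bound on $\|u\|_{\mathcal H(Q_T)}$. Uniqueness in $\mathcal H(Q_T)$ is immediate from uniqueness of the weak solution, and the monotonicity of $C_0(T)$ in $T$ follows by tracking constants through Gronwall.

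\textbf{Main obstacle.} The only genuinely delicate point is verifying that the abstract theorem of \cite{BDDM1} applies with $H^1_a(-1,1)$ as the precise trace/interpolation space $D((-A)^{1/2})$ — that is, confirming that the form $\langle -A_0u,v\rangle=\int_{-1}^1 a u_x v_x\,dx$ on $H^1_a(-1,1)$ is closed and that $D((-A_0)^{1/2})$ coincides with $H^1_a(-1,1)$ (and not some strictly smaller or larger space) in the degenerate setting where functions in $H^1_a$ need not be bounded. Everything else — the bounded perturbation, the energy estimates, Gronwall, the identification $\|(au_x)_x\|=\|A_0u\|$ — is routine. I would handle the obstacle by citing the self-adjointness result of \cite{CMP} together with Proposition~\ref{caratH2}, which together pin down the form domain, and by noting that $\xi_a\in L^1(-1,1)$ (assumption $(A.4)$, here in its weakened form) is precisely the condition ensuring $H^1_a(-1,1)\hookrightarrow L^2(-1,1)$ is well-behaved; for the purposes of this paper it suffices to quote these, as the detailed spectral analysis was carried out in \cite{CMP} and \cite{CF3}.
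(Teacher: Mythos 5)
Your proposal is correct and follows exactly the route the paper takes: the paper offers no proof beyond citing Theorem~3.1 in Section~3.6.3 of \cite{BDDM1} (together with \cite{CV} and \cite{CF3}), and your write-up simply fills in the standard reduction — self-adjointness and nonpositivity of $A_0$ from \cite{CMP}, the identification $D((-A_0)^{1/2})=H^1_a(-1,1)$ via the form, absorption of the bounded perturbation $\alpha I$, and the conversion of the abstract estimate into the $\mathcal{H}(Q_T)$ bound. Nothing in your argument deviates from or goes beyond what the cited abstract theorem delivers.
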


\subsection{Some results for singular Sturm-Liouville problems}

In \cite{CFproceedings1}, in collaboration with P. Cannarsa, we prove the following results (see also \cite{CF3}).
\begin{prop}\label{compact imbedding}
Assume that $\xi_a\in L^1(-1,1),$ where $\xi_a(x)=\int_0^x\frac{ds}{a(s)}.$ Then,
$$H^1_a(-1,1)\hookrightarrow L^2(-1,1)\qquad \mbox{ with compact embedding }.$$
\end{prop}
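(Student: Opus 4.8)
\textbf{Proof proposal for Proposition \ref{compact imbedding} (compact embedding $H^1_a(-1,1)\hookrightarrow L^2(-1,1)$).}

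The plan is to first establish the \emph{continuous} embedding, and then upgrade it to compactness via the Fr\'echet--Kolmogorov (or, equivalently, Rellich-type) criterion for precompactness in $L^2$. The continuous embedding is essentially Lemma \ref{sob1} with $p=1$: since $\xi_a\in L^1(-1,1)$, estimate \eqref{S1.2} gives the pointwise bound $|u(x)-u(0)|\le \sqrt{|\xi_a(x)|}\,|u|_{1,a}$, and combining with the control on $|u(0)|$ from \eqref{S1.3} yields $\|u\|_{L^2(-1,1)}\le c\,\|u\|_{1,a}$; in fact this already shows $H^1_a(-1,1)\hookrightarrow L^2(-1,1)$ boundedly, and even into $L^{2}$ with a slightly better exponent if $\xi_a$ is more integrable. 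So the real content is compactness.

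For compactness, I would take a bounded sequence $(u_n)$ in $H^1_a(-1,1)$, so $\|u_n\|_{L^2}\le M$ and $\|\sqrt{a}\,u_n'\|_{L^2}\le M$, and show it has an $L^2(-1,1)$-convergent subsequence. The key observation is that on any compact subinterval $[-1+\delta,1-\delta]$ the weight $a$ is bounded below by a positive constant $a_\delta$, hence $\|u_n'\|_{L^2(-1+\delta,1-\delta)}\le M/\sqrt{a_\delta}$, so $(u_n)$ is bounded in $H^1(-1+\delta,1-\delta)$; by the classical Rellich--Kondrachov theorem it is precompact in $L^2(-1+\delta,1-\delta)$. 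One then runs a diagonal argument over $\delta=1/k$ to extract a subsequence converging in $L^2_{\loc}(-1,1)$ to some limit $u$. To control the tails near $x=\pm1$ uniformly in $n$, I would use the pointwise bound again: for $x$ near $1$,
\begin{equation*}
|u_n(x)|^2\le 2|u_n(x)-u_n(0)|^2+2|u_n(0)|^2\le 2|\xi_a(x)|\,|u_n|_{1,a}^2+2|u_n(0)|^2,
\end{equation*}
and since $|u_n(0)|\le c\|u_n\|_{1,a}\le cM$ is bounded (this is exactly the role of \eqref{S1.3}), integrating over $(1-\delta,1)\cup(-1,-1+\delta)$ gives
\begin{equation*}
\int_{(1-\delta,1)\cup(-1,-1+\delta)}|u_n(x)|^2\,dx\le 2M^2\!\!\int_{(1-\delta,1)\cup(-1,-1+\delta)}\!\!|\xi_a(x)|\,dx+4c^2M^2\cdot 2\delta,
\end{equation*}
which tends to $0$ as $\delta\to0$ \emph{uniformly in $n$} because $\xi_a\in L^1$. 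Combining the uniform smallness of the tails with $L^2_{\loc}$-convergence on the bulk, a standard $\varepsilon/3$ argument shows the subsequence is Cauchy in $L^2(-1,1)$, establishing compactness.

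The main obstacle is the uniform control of the boundary tails, since functions in $H^1_a$ need not be bounded and the operator degenerates precisely at $\pm1$; everything hinges on the fact that the hypothesis $\xi_a\in L^1(-1,1)$ makes the weighted-to-unweighted trade-off in \eqref{S1.2}--\eqref{S1.3} integrable, giving an equi-integrable majorant near the endpoints. A minor technical point to handle carefully is that the $L^2_{\loc}$ limit $u$ obtained by diagonalization a priori lies only in $L^2_{\loc}$, but the uniform tail bound plus Fatou's lemma shows $u\in L^2(-1,1)$, so no separate argument is needed to place the limit in the right space.
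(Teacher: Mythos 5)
Your proof is correct, and every step checks out. Note that the paper itself does not contain a proof of Proposition~\ref{compact imbedding}: it is quoted from \cite{CFproceedings1} (see also \cite{CF3}), and only the proof of Lemma~\ref{Autof} is recalled in \ref{PSSL}, so there is no in-text argument to compare yours against. That said, your route is the natural (and, in the degenerate-parabolic literature, standard) one: continuity of the embedding is immediate, interior precompactness follows from Rellich--Kondrachov on $[-1+\delta,1-\delta]$ where $a\in C^1([-1,1])$, $a>0$ on $(-1,1)$ guarantees a positive lower bound $a_\delta$, a diagonal extraction gives $L^2_{\loc}$-convergence, and the whole point of the hypothesis $\xi_a\in L^1(-1,1)$ is precisely the uniform equi-integrability of the tails near $\pm1$ that you extract from the pointwise bound $|u(x)|^2\le 2|\xi_a(x)|\,|u|_{1,a}^2+2|u(0)|^2$ combined with the control of $|u(0)|$ in \eqref{S1.3}; this is exactly the obstruction you correctly identify, since elements of $H^1_a(-1,1)$ need not be bounded near the degeneracy. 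Two cosmetic remarks: the constant in your tail estimate should read $2c^2M^2\cdot 2\delta$ rather than $4c^2M^2\cdot 2\delta$ (your larger constant is of course still a valid upper bound), and the closing appeal to Fatou's lemma is superfluous, since once the $\varepsilon/3$ argument shows the subsequence is Cauchy in $L^2(-1,1)$, completeness of $L^2(-1,1)$ already places the limit in the right space.
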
 
Let $A = A_0 + \alpha I,$ where the operator $A_0$ is defined in (\ref{D(A0)}) and $\alpha\in L^\infty (-1,1).$
Since $A$ is self-adjoint and $D(A)\hookrightarrow L^2(-1,1)$ is compact (see Proposition \ref{compact imbedding}), we have the following (see also \cite{BR}).
\begin{lem}\label{spectrum}
There exists an increasing sequence $\{\lambda_k\}_{k\in\N},$ with
$\lambda_k\longrightarrow +\infty\, \mbox{ as } \, k \, \rightarrow\infty\,,$
such that the eigenvalues of $A$ are given by $\{-\lambda_k\}_{k\in\N}$, and the corresponding eigenfunctions $\{\omega_k\}_{k\in\N}$ form a complete orthonormal system in $L^2(-1,1)$.
\end{lem}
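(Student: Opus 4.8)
The plan is to deduce \lemref{spectrum} as a standard application of the spectral theorem for self-adjoint operators with compact resolvent. First I would verify that $A = A_0 + \alpha I$ is self-adjoint on $D(A) = H^2_a(-1,1)$. The operator $A_0 u = (au_x)_x$ is symmetric on $H^2_a(-1,1)$: integrating by parts and using \propref{caratH2}, which guarantees $\lim_{x\to\pm1} a(x)u_x(x) = 0$ for every $u \in H^2_a(-1,1)$, the boundary terms vanish, so $\langle A_0 u, v\rangle = -\int_{-1}^1 a u_x v_x\,dx = \langle u, A_0 v\rangle$ for all $u,v \in D(A_0)$. That it is actually self-adjoint (not merely symmetric) follows from the maximal regularity / existence theory already recorded: the resolvent $(\lambda I - A_0)^{-1}$ exists as a bounded operator on $L^2(-1,1)$ for $\lambda$ large, so $A_0$ is $m$-dissipative and symmetric, hence self-adjoint. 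Adding the bounded, self-adjoint multiplication operator $\alpha I$ (here $\alpha \in L^\infty(-1,1)$ is real-valued) preserves self-adjointness and leaves the domain unchanged, so $A$ is self-adjoint.

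Next I would establish that $A$ has compact resolvent. By \propref{compact imbedding}, under the hypothesis $\xi_a \in L^1(-1,1)$ — which is implied by the standing assumption $(A.4)$, since $q_\vartheta \geq 1$ — the embedding $H^1_a(-1,1) \hookrightarrow L^2(-1,1)$ is compact. Since $D(A) = H^2_a(-1,1) \hookrightarrow H^1_a(-1,1)$ continuously (indeed the $\|\cdot\|_{2,a}$-norm dominates $\|\cdot\|_{1,a}$), the composition $D(A) \hookrightarrow L^2(-1,1)$ is compact. Equivalently, for any $\mu$ in the resolvent set, $(\mu I - A)^{-1} : L^2(-1,1) \to D(A) \hookrightarrow L^2(-1,1)$ factors through a compact embedding, hence is a compact operator on $L^2(-1,1)$.

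Finally I would invoke the spectral theorem: a self-adjoint operator on a separable Hilbert space with compact resolvent has a purely discrete spectrum consisting of real eigenvalues of finite multiplicity accumulating only at $\pm\infty$, with a corresponding complete orthonormal system of eigenfunctions. Since $A_0$ is dissipative and $\alpha$ is bounded, $A$ is bounded above, so the spectrum is bounded above and can only accumulate at $-\infty$; writing the eigenvalues as $\{-\lambda_k\}_{k\in\N}$ with $\lambda_k$ arranged in increasing order (repeated according to multiplicity) gives $\lambda_k \to +\infty$, and the eigenfunctions $\{\omega_k\}_{k\in\N}$ form the desired complete orthonormal basis of $L^2(-1,1)$.

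I do not expect a genuine obstacle here: the content is entirely the spectral theorem applied to a self-adjoint operator with compact resolvent, and both ingredients (self-adjointness, via the trace/integration-by-parts facts in \propref{caratH2} and the existence theory; compactness, via \propref{compact imbedding}) are already available in the excerpt. The only point requiring mild care is confirming that the degenerate Neumann boundary conditions encoded in $H^2_a(-1,1)$ genuinely make $A_0$ self-adjoint rather than merely symmetric — but this is exactly what the cited results from \cite{CMP}, \cite{CMV2}, and \cite{CF3} provide, so it may simply be quoted.
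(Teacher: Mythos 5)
Your proposal is correct and follows essentially the same route as the paper, which simply asserts that $A$ is self-adjoint and that $D(A)\hookrightarrow L^2(-1,1)$ is compact (via Proposition \ref{compact imbedding}) and then invokes the standard spectral theorem for self-adjoint operators with compact resolvent, citing \cite{BR} and \cite{CMP}. You merely fill in the routine details (integration by parts using Proposition \ref{caratH2}, $m$-dissipativity, boundedness above) that the paper leaves to the references.
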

\begin{rem}\label{Legendre}
In the case 
$a(x)=1-x^2$, so that $A_0=\left((1-x^2)u_x\right)_x,$ then the orthonormal eigenfunctions of $A_0$ are reduced to Legendre's polynomials $P_k(x)$, and the eigenvalues are $\mu_k=(k-1)k, k\in\N.$ $P_k(x)$ is equal to $\sqrt{\frac{2}{2k-1}} L_k(x),$ where $L_k(x)$ is assigned by \textit{Rodrigues's
formula}:
$$L_k(x)=\frac{1}{2^{k-1} (k-1)!} \frac{d}{dx^{k-1}} \, (x^2-1)^{k-1} \qquad (k \geq 1). $$
\end{rem}
In \cite{CFproceedings1} (see also \cite{CF3}) we obtain the following result.
\begin{lem}\label{Autof}
Let $v\in C^\infty([-1,1]), v >0$ on $[-1,1],$ let $\alpha_*(x)=-\frac{(a(x)v_{x}(x))_x}{v(x)},\, x\in
(-1,1).$ Let A be the operator defined in (\ref{D(A)}) with $\alpha=\alpha_*$
\begin{equation}\label{operalfastella}
\left\{\begin{array}{l}
\displaystyle{D(A)=H^2_a (-1,1)}\\ [2.5ex]
\displaystyle{A = A_0 + \alpha_* I}~,
\end{array}\right.
\end{equation}
and let $\{\lambda_k\}, \{\omega_k\}$ be the eigenvalues and eigenfunctions of $A,$ respectively, given by Lemma \ref{spectrum}.
Then $$\lambda_1=0\,\, \mbox{ and } \,\,|\omega_1|=\frac{v}{\|v\|
}.$$
Moreover, $\frac{v}{\|v\|
}$ and $-\frac{v}{\|v\|
}$ are the only eigenfunctions of $A$ with norm $1$ that do not change sign in $(-1,1)$.
\end{lem}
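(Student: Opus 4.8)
The plan is to reduce everything to an ODE/spectral analysis of the one-dimensional singular Sturm-Liouville operator $A = A_0 + \alpha_* I$ with the specific zeroth-order coefficient $\alpha_*(x) = -\frac{(a(x)v_x(x))_x}{v(x)}$. First I would observe that, by construction, $v$ itself solves $A v = A_0 v + \alpha_* v = (a v_x)_x - (a v_x)_x = 0$ pointwise in $(-1,1)$; and since $v \in C^\infty([-1,1])$ with $v > 0$ on $[-1,1]$, one has $a v_x \in C^1([-1,1])$ with $a v_x$ vanishing at $\pm 1$ (because $a(\pm 1)=0$), so $v \in H^2_a(-1,1) = D(A)$. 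Hence $0$ is an eigenvalue of $A$ with eigenfunction $v$. By Lemma \ref{spectrum} the eigenvalues are $\{-\lambda_k\}$ with $\lambda_k \uparrow +\infty$, so $0 = -\lambda_k$ for some $k$.

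The crux is then to show $0$ is the \emph{largest} eigenvalue, i.e. $\lambda_1 = 0$, and that the associated eigenspace is one-dimensional spanned by $v$. For this I would use the classical oscillation/nodal argument for Sturm-Liouville problems: an eigenfunction $\omega_k$ for the eigenvalue $-\lambda_k$ has at least $k-1$ interior zeros (Sturm oscillation theory, adapted to the singular endpoints via the substitution $\xi_a$ and the fact that solutions are locally absolutely continuous with $\sqrt{a}\,u_x \in L^2$). Since $v$ has no zeros in $(-1,1)$, it must be the first eigenfunction, so $\lambda_1 = 0$. Alternatively, and perhaps more cleanly for the degenerate setting, I would set up the Liouville-type transformation $u = v\,w$: a direct computation gives, for $u \in D(A)$,
\begin{equation*}
A u = (a u_x)_x + \alpha_* u = v\,(a w_x)_x + \text{(first order in }w_x\text{)} \cdot v^{-1}\cdot(\dots),
\end{equation*}
and more precisely one checks that $A(vw) = \tfrac{1}{v}\big(a v^2 w_x\big)_x$, so that the eigenvalue problem $Au = -\lambda u$ becomes $\big(a v^2 w_x\big)_x = -\lambda v^2 w$, a Sturm-Liouville problem in \emph{divergence form} with positive weight $v^2$ and positive leading coefficient $a v^2$. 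Testing this weak formulation against $w$ shows $\lambda \int a v^2 w_x^2 = \lambda \cdot (\text{something} \ge 0)$ relative to $\int v^2 w^2$, and in particular the Rayleigh quotient is $\ge 0$, forcing $\lambda_1 \le 0$; combined with $0$ being an eigenvalue (take $w \equiv 1$) we get $\lambda_1 = 0$. Simplicity of $\lambda_1$ then follows because any nonnegative eigenfunction-type solution of $(a v^2 w_x)_x = 0$ with the natural boundary behaviour is constant, so the eigenfunction is a multiple of $v$; normalizing gives $|\omega_1| = v/\|v\|$.

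For the final sentence, suppose $\omega_k$ is a unit eigenfunction that does not change sign in $(-1,1)$, say $\omega_k \ge 0$. By the nodal characterization, a non-sign-changing eigenfunction can only correspond to the bottom of the spectrum, i.e. $-\lambda_k = -\lambda_1 = 0$; equivalently, orthogonality $\langle \omega_k, \omega_1 \rangle = 0$ for $k \ge 2$ is impossible when both $\omega_k$ and $\omega_1$ are of one sign, since then $\langle \omega_k,\omega_1\rangle > 0$. Hence $k=1$, and by simplicity $\omega_k = \pm\, v/\|v\|$. I expect the main obstacle to be making the Sturm oscillation / nodal-domain argument rigorous at the \emph{degenerate} endpoints $x = \pm 1$: one must justify that eigenfunctions in $H^2_a(-1,1)$ genuinely have the boundary behaviour (via Proposition \ref{caratH2}, $a\omega_x \to 0$ and $a\omega \in H^1_0$) needed for the integration-by-parts identities, and that the Liouville substitution $u = vw$ preserves the domain $H^2_a$ in both directions — this uses $v$ bounded away from $0$ and smooth up to the boundary, which is exactly why $v \in C^\infty([-1,1])$ with $v>0$ on the \emph{closed} interval is assumed. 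Once the weak/variational formulation with weight $v^2$ is in place, the positivity of the Rayleigh quotient and the simplicity of the ground state are standard.
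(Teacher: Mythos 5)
Your proposal is correct and follows essentially the same route as the paper: the paper proves $\lambda_1=0$ via the variational characterization $\lambda_1=\inf_{u\in H^1_a}\int(a\,u_x^2-\alpha_*u^2)\,dx\big/\!\int u^2\,dx$ together with an integration by parts showing $\int\alpha_*u^2\,dx\le\int a\,u_x^2\,dx$, which is precisely the ground-state identity $\int(a\,u_x^2-\alpha_*u^2)\,dx=\int a\,v^2w_x^2\,dx$ that your substitution $u=vw$ produces, and it settles the final claim by the same orthogonality argument against the positive eigenfunction. The only blemish is a sign slip in your sketch: the nonnegativity of the Rayleigh quotient forces $\lambda_1\ge 0$, while the fact that $0$ is an eigenvalue forces $\lambda_1\le 0$ (not the other way around); the conclusion $\lambda_1=0$ is unaffected.
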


\begin{rem}
This problem 
 is equivalent to the following singular 
 Sturm-Liouville problem
\begin{equation*}
\left\{\begin{array}{l}
\displaystyle{(a(x) \omega_x)_x +\alpha_* (x)\omega+\lambda\,\omega=0\,\,\qquad \mbox{in} \qquad (-1,1) 
}\\ [2.5ex]
\displaystyle{a(x)\omega_x(x)|_{x=\pm 1} = 0\,\,
\,\,\, }~.
\end{array}\right.
\end{equation*}
\end{rem}
\noindent The proof of Lemma \ref{Autof} is recalled in \ref{PSSL}.

\subsection{Existence and uniqueness of solutions of semilinear problems}

Observe that the nonlinear problem $(\ref{Psemilineare})$  can be recast in the Hilbert space $L^2(-1,1)$ as
\begin{equation}\label{NL}
 \left\{\begin{array}{l}
\displaystyle{u^\prime(t)=A\,u(t)+\phi(u)\,,\qquad  t>0 }\\ [2.5ex]
\displaystyle{u(0)=u_0\, 
}~,
\end{array}\right.
\end{equation}
where $A$ is the operator defined in (\ref{D(A)}), $\alpha\in L^\infty(-1,1),$ 
$u_0\in L^2(-1,1),$ and, for every $u\in {\cal{B}(Q_T)},
$ 
\begin{equation}\label{phimap}
\phi(u)(t,x):=f(t,x,u(t,x)),\;\;\forall (t,x)\in Q_T. 
\end{equation}
By the next lemmas (Lemma \ref{f in L2 cor} and Lemma \ref{f in L2}) we will deduce the following theorem.
\begin{thm}\label{loclip}
Let $T>0, \;1\leq\vartheta<3,\: \xi_a\in  L^{q_\vartheta}(-1,1),$ where $\,q_\vartheta=\max\Big\{\frac{1+\vartheta}{3-\vartheta}, 2\vartheta-1\Big\}.$
Let $f:Q_T\times\R\rightarrow \R$ be a function that satisfies assumption $(A.3),$ then 
 $\phi:{\cal{B}(Q_T)}\longrightarrow L^{1+\frac{1}{\vartheta}}(Q_T)$ 
is a locally Lipschitz continuous map 
 and $\phi ({\cal{H}(Q_T)})\subseteq L^2(Q_T).$
 \end{thm}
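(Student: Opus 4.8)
The plan is to verify the two claims of Theorem~\ref{loclip} separately, both relying on the growth condition \eqref{Superlinearit} together with the embedding corollaries of Section~3.2. First I would establish that $\phi$ maps $\mathcal{B}(Q_T)$ into $L^{1+1/\vartheta}(Q_T)$: for $u\in\mathcal{B}(Q_T)$, the pointwise bound $|f(t,x,u)|\le\gamma_0|u|^\vartheta$ gives $|\phi(u)|^{1+1/\vartheta}\le\gamma_0^{1+1/\vartheta}|u|^{\vartheta+1}$, so $\|\phi(u)\|_{L^{1+1/\vartheta}(Q_T)}^{1+1/\vartheta}\le\gamma_0^{1+1/\vartheta}\|u\|_{L^{1+\vartheta}(Q_T)}^{1+\vartheta}$, and the right-hand side is finite by Corollary~\ref{sob2cor}, whose hypothesis $\xi_a\in L^{(1+\vartheta)/(3-\vartheta)}(-1,1)$ is guaranteed since $q_\vartheta\ge\frac{1+\vartheta}{3-\vartheta}$. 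This already yields $\phi(u)\in L^{1+1/\vartheta}(Q_T)$ with a quantitative bound in terms of $\|u\|_{\mathcal{B}(Q_T)}$.

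Next I would prove local Lipschitz continuity. Take $u,v$ in a ball of radius $R$ in $\mathcal{B}(Q_T)$. Using \eqref{Remlip} from Remark~\ref{rem1},
\begin{equation*}
|\phi(u)-\phi(v)|\le\nu\bigl(1+|u|^{\vartheta-1}+|v|^{\vartheta-1}\bigr)|u-v|\quad\text{a.e. in }Q_T.
\end{equation*}
Raising to the power $1+\frac1\vartheta=\frac{\vartheta+1}{\vartheta}$ and integrating, I would apply H\"older's inequality with exponents chosen so that the factor $|u-v|$ lands in $L^{1+\vartheta}(Q_T)$ and the factor $1+|u|^{\vartheta-1}+|v|^{\vartheta-1}$ lands in the conjugate space; a short computation shows the conjugate exponent makes $|u|^{\vartheta-1}$ appear to a power equal to $1+\vartheta$ as well, so both pieces are controlled by $\|\cdot\|_{L^{1+\vartheta}(Q_T)}$, hence by $\|\cdot\|_{\mathcal{B}(Q_T)}$ through Corollary~\ref{sob2cor}. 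This produces
\begin{equation*}
\|\phi(u)-\phi(v)\|_{L^{1+1/\vartheta}(Q_T)}\le C(R,T)\,\|u-v\|_{\mathcal{B}(Q_T)},
\end{equation*}
which is the desired local Lipschitz estimate. The bookkeeping of H\"older exponents is the place where one must be careful: one needs to check that $\frac{1+\vartheta}{3-\vartheta}$ (and not a larger exponent) suffices for this term, which is precisely why $q_\vartheta$ is defined as a maximum rather than a single expression.

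For the last assertion $\phi(\mathcal{H}(Q_T))\subseteq L^2(Q_T)$, I would argue analogously but invoke Corollary~\ref{sob3} instead: for $u\in\mathcal{H}(Q_T)$ one has $|\phi(u)|^2\le\gamma_0^2|u|^{2\vartheta}$, so $\|\phi(u)\|_{L^2(Q_T)}^2\le\gamma_0^2\|u\|_{L^{2\vartheta}(Q_T)}^{2\vartheta}$, and $\mathcal{H}(Q_T)\hookrightarrow L^{2\vartheta}(Q_T)$ holds by Corollary~\ref{sob3} because $q_\vartheta\ge 2\vartheta-1$ ensures $\xi_a\in L^{2\vartheta-1}(-1,1)$. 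The main obstacle throughout is not any single estimate but the consistent choice of H\"older exponents in the Lipschitz step so that every occurrence of $|u|$ and $|v|$ is absorbed by a norm the embedding results actually control; once the exponents are pinned down, each step is a routine application of the lemmas of Section~3.2.
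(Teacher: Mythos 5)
Your proposal is correct and follows essentially the same route as the paper: the paper proves the theorem via Lemma~\ref{l3} (growth bound \eqref{Superlinearit} plus Corollary~\ref{sob2cor} for the mapping property, and \eqref{Remlip} with H\"older exponents $\vartheta$ and $\tfrac{\vartheta}{\vartheta-1}$ for the local Lipschitz estimate, so that both factors land in $L^{\vartheta+1}(Q_T)$) and Lemma~\ref{f in L2} with Corollary~\ref{sob3} for $\phi(\mathcal{H}(Q_T))\subseteq L^2(Q_T)$. Your exponent bookkeeping and the role of $q_\vartheta$ as a maximum of the two requirements match the paper's argument exactly.
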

We start with the following lemma.
\begin{lem}\label{f in L2}
Let $T>0, \vartheta\geq1, \xi_a\in L^{2\vartheta-1}(-1,1),$ 
and let
$u\in {\cal{H}}(Q_T).$ 
Let 
$f:Q_T\times\R\rightarrow \R$ be a function that satisfies assumption $(A.3)$(\footnote{
We observe 
that the assumption $(H.3)$ of Appendix B would be sufficient to place of $(A.3)$.}). 
Then, the function
$(t,x)\longmapsto f(t,x,u(t,x))$
belongs to $L^2(Q_T)$ and the following estimate holds
$$\int_{Q_T}|f(t,x,u(t,x))|^2\,dx\,dt\leq c\,T\,
\|u\|
_{H^1(0,T;L^2(-1,1))}\,\|u\|_{L^\infty(0,T;H^1_a(-1,1))}^{2\vartheta-1},$$
for some positive constant $c.$
\end{lem}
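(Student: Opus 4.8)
\textbf{Proof proposal for Lemma \ref{f in L2}.}

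The plan is to control $|f(t,x,u(t,x))|$ pointwise by $|u(t,x)|^\vartheta$ using the growth bound \eqref{Superlinearit} in assumption $(A.3)$, and then to recognize that the resulting integral $\int_{Q_T}|u|^{2\vartheta}\,dx\,dt$ is exactly the quantity estimated by the embedding ${\cal{H}}(Q_T)\subset L^{2\vartheta}(Q_T)$. First I would note that since $u\in{\cal{H}}(Q_T)$ the function $(t,x)\mapsto u(t,x)$ is measurable on $Q_T$, and since $f$ is a Carath\'eodory function (the first two bullets of $(A.3)$ give measurability in $(t,x)$ and continuity — here local absolute continuity — in $u$), the composition $(t,x)\mapsto f(t,x,u(t,x))$ is measurable on $Q_T$. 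By \eqref{Superlinearit} we have $|f(t,x,u(t,x))|\le\gamma_0|u(t,x)|^\vartheta$ for a.e.\ $(t,x)\in Q_T$, hence
\begin{equation*}
\int_{Q_T}|f(t,x,u(t,x))|^2\,dx\,dt\le\gamma_0^2\int_{Q_T}|u(t,x)|^{2\vartheta}\,dx\,dt=\gamma_0^2\,\|u\|_{L^{2\vartheta}(Q_T)}^{2\vartheta}.
\end{equation*}

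Next I would invoke Lemma \ref{lemma sob3} with $p=\vartheta$ (legitimate since $\vartheta\ge1$ and $\xi_a\in L^{2\vartheta-1}(-1,1)$ is precisely the hypothesis of that lemma with $2p-1=2\vartheta-1$), which gives
\begin{equation*}
\|u\|_{L^{2\vartheta}(Q_T)}\le c\,T^{\frac{1}{2\vartheta}}\,\|u\|_{H^1(0,T;L^2(-1,1))}^{\frac{1}{2\vartheta}}\,\|u\|_{L^\infty(0,T;H^1_a(-1,1))}^{1-\frac{1}{2\vartheta}}
\end{equation*}
for every $u\in H^1(0,T;L^2(-1,1))\cap L^\infty(0,T;H^1_a(-1,1))$, a space which contains ${\cal{H}}(Q_T)$. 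Raising this to the power $2\vartheta$ yields
\begin{equation*}
\|u\|_{L^{2\vartheta}(Q_T)}^{2\vartheta}\le c^{2\vartheta}\,T\,\|u\|_{H^1(0,T;L^2(-1,1))}\,\|u\|_{L^\infty(0,T;H^1_a(-1,1))}^{2\vartheta-1},
\end{equation*}
and combining with the pointwise bound above (absorbing $\gamma_0^2c^{2\vartheta}$ into a new constant $c$) gives exactly the claimed estimate; in particular $f(\cdot,\cdot,u(\cdot,\cdot))\in L^2(Q_T)$.

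I do not expect any serious obstacle here: the lemma is essentially a bookkeeping combination of the superlinear growth hypothesis \eqref{Superlinearit} with the weighted Sobolev embedding Lemma \ref{lemma sob3}. The only points requiring a word of care are (i) checking measurability of the composition from the Carath\'eodory structure in $(A.3)$ — and as Remark \ref{remmon} notes, mere continuity in $u$ suffices for this — and (ii) making sure the exponent matching is right, namely that $p=\vartheta$ turns the hypothesis $\xi_a\in L^{2\vartheta-1}(-1,1)$ of the present lemma into the hypothesis $\xi_a\in L^{2p-1}(-1,1)$ of Lemma \ref{lemma sob3}, and that ${\cal{H}}(Q_T)$ embeds continuously into $H^1(0,T;L^2(-1,1))\cap L^\infty(0,T;H^1_a(-1,1))$, which is immediate from the definition of the ${\cal{H}}(Q_T)$-norm. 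One could alternatively phrase the last step via Corollary \ref{sob3} directly, which packages the embedding ${\cal{H}}(Q_T)\subset L^{2\vartheta}(Q_T)$ with the estimate $\|u\|_{L^{2\vartheta}(Q_T)}\le c\,T^{1/(2\vartheta)}\|u\|_{{\cal{H}}(Q_T)}$, but then one obtains the slightly weaker-looking bound in terms of $\|u\|_{{\cal{H}}(Q_T)}^{2\vartheta}$ rather than the split norm; since the statement of the lemma asks for the split form, using Lemma \ref{lemma sob3} directly is the cleaner route.
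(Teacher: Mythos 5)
Your proof is correct and follows essentially the same route as the paper: bound $|f(t,x,u)|^2\le\gamma_0^2|u|^{2\vartheta}$ via \eqref{Superlinearit} and then apply Lemma \ref{lemma sob3} with $p=\vartheta$ (whose hypothesis $\xi_a\in L^{2\vartheta-1}(-1,1)$ is exactly the one assumed here) to get the split estimate. The added remarks on measurability of the composition and on the exponent bookkeeping are sound and only make the argument more explicit than the paper's one-line version.
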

\begin{proof}
By Lemma \ref{lemma sob3}, since $\xi_a\in L^{2\vartheta-1}(-1,1)$ then $u\in L^{2\vartheta}(Q_T).$ By (\ref{Superlinearit}) (see assumption (A.3)) we obtain
$$
\int_{Q_T}|f(t,x,u(t,x))|^2\,dx\,dt
\leq \gamma_0^2\int_{Q_T}|u|^{2\vartheta}\,dx\,dt\;
\leq 
k
\,T\,\|u\|
_{H^1(0,T;L^2(-1,1))}\,\|u\|_{L^\infty(0,T;H^1_a(-1,1))}^{2\vartheta-1}
\,<+\infty,
$$
from wich the conclusion follows.
\end{proof}
\begin{cor}\label{f in L2 cor}
Let $T>0, \vartheta \geq1, \xi_a\in L^{2\vartheta-1}(-1,1),$ 
and let
$u\in {\cal{H}}(Q_T).$ 
Let 
$f:Q_T\times\R\rightarrow \R$ be a function that satisfies assumption $(A.3)\,(^{14}).$ 
Then, 
we have the following estimate
$$\int_{Q_T}|f(t,x,u(t,x))|^2\,dx\,dt\leq c\,T\,\|u\|^{2\vartheta}_{{\cal{H}}(Q_T)},$$
for some positive constant $c.$
\end{cor}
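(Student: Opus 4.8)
The plan is to deduce Corollary \ref{f in L2 cor} directly from Lemma \ref{f in L2} by absorbing the two separate $\mathcal{H}(Q_T)$-type norms appearing there into the single norm $\|u\|_{\mathcal{H}(Q_T)}$. Concretely, I would start from the estimate furnished by Lemma \ref{f in L2},
\begin{equation*}
\int_{Q_T}|f(t,x,u(t,x))|^2\,dx\,dt\leq c\,T\,\|u\|_{H^1(0,T;L^2(-1,1))}\,\|u\|_{L^\infty(0,T;H^1_a(-1,1))}^{2\vartheta-1},
\end{equation*}
which is legitimate since the hypotheses of the Corollary ($T>0$, $\vartheta\geq 1$, $\xi_a\in L^{2\vartheta-1}(-1,1)$, $u\in\mathcal{H}(Q_T)$, and $f$ satisfying (A.3)) are exactly those of the Lemma.

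Next I would observe the two elementary inclusions of norms: from the definition of $\|u\|_{\mathcal{H}(Q_T)}$ we have both
\begin{equation*}
\|u\|_{H^1(0,T;L^2(-1,1))}^2=\sup_{[0,T]}\|u\|^2+\int_0^T\|u_t\|^2\,dt\leq \|u\|_{\mathcal{H}(Q_T)}^2
\end{equation*}
and
\begin{equation*}
\|u\|_{L^\infty(0,T;H^1_a(-1,1))}^2=\sup_{[0,T]}\bigl(\|u\|^2+\|\sqrt{a}u_x\|^2\bigr)\leq \|u\|_{\mathcal{H}(Q_T)}^2,
\end{equation*}
since each summand on the left-hand side is one of the nonnegative terms making up $\|u\|_{\mathcal{H}(Q_T)}^2$. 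Hence $\|u\|_{H^1(0,T;L^2(-1,1))}\leq\|u\|_{\mathcal{H}(Q_T)}$ and $\|u\|_{L^\infty(0,T;H^1_a(-1,1))}\leq\|u\|_{\mathcal{H}(Q_T)}$.

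Substituting these two bounds into the inequality from Lemma \ref{f in L2} immediately gives
\begin{equation*}
\int_{Q_T}|f(t,x,u(t,x))|^2\,dx\,dt\leq c\,T\,\|u\|_{\mathcal{H}(Q_T)}\,\|u\|_{\mathcal{H}(Q_T)}^{2\vartheta-1}=c\,T\,\|u\|_{\mathcal{H}(Q_T)}^{2\vartheta},
\end{equation*}
which is the claimed estimate with the same constant $c$. I do not anticipate any real obstacle here: the entire content is the bookkeeping observation that the norm of $\mathcal{H}(Q_T)$ dominates both factors on the right of Lemma \ref{f in L2}, so this is a one-line corollary. The only point worth a word of care is making sure that the exponents add correctly, $1+(2\vartheta-1)=2\vartheta$, and that finiteness is inherited from Lemma \ref{f in L2} (where it was already noted that the quantity is $<+\infty$), so that the statement $\phi(\mathcal{H}(Q_T))\subseteq L^2(Q_T)$ recorded in Theorem \ref{loclip} indeed follows.
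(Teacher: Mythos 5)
Your proposal is correct and is exactly the deduction the paper intends: the corollary is stated immediately after Lemma \ref{f in L2} with no separate proof, precisely because both $\|u\|_{H^1(0,T;L^2(-1,1))}$ and $\|u\|_{L^\infty(0,T;H^1_a(-1,1))}$ are dominated by $\|u\|_{\mathcal{H}(Q_T)}$ as you observe, so the product is bounded by $\|u\|_{\mathcal{H}(Q_T)}^{1+(2\vartheta-1)}=\|u\|_{\mathcal{H}(Q_T)}^{2\vartheta}$. Nothing is missing.
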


\begin{lem}\label{l3}
Let $T>0, 1\leq\vartheta<3,\: \xi_a\in L^{\frac{1+\vartheta}{3-\vartheta}}(-1,1).$ 
Let $f:Q_T\times\R\rightarrow \R$ be a function that satisfies assumption $(A.3).$ 
Then, 
\begin{enumerate}
\item 
for every $u\in {\cal{B}}(Q_T),$
the function
$(t,x)\longmapsto f(t,x,u(t,x))$
belongs to $L^{1+\frac{1}{\vartheta}}(Q_T)$ and the following estimate holds
$$\int_{Q_T}|f(t,x,u(t,x))|^{1+\frac{1}{\vartheta}}\,dx\,dt\leq c\,T^{\frac{3-\vartheta}{4}}
\|u\|^{\vartheta+1}
_{\cal{B}(Q_T)}\,
,$$
for some positive constant $c;$
\item
 $\phi:{\cal{B}(Q_T)}\longrightarrow L^{1+\frac{1}{\vartheta}}(Q_T)$(\footnote{
 The map $\phi$ is defined in (\ref{phimap}).
 }) is a locally Lipschitz continuous map
 and, for every $R>0,$ the following estimate holds
 \begin{equation}\label{llc}
 \|\phi(u)-\phi(v)\|_{L^{1+\frac{1}{\vartheta}}(Q_T)}\leq C_R(T)\|u-v\|_{\cal{B}(Q_T)}, \; \forall u,v\in{\cal{B}(Q_T)}, \|u\|_{{\cal{B}(Q_T)}}\leq R, \|v\|_{{\cal{B}(Q_T)}}\leq R\,,
 \end{equation}
 where $C_R(T)$ is a positive constant increasing in $T$.
\end{enumerate}
\end{lem}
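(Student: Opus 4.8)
The plan is to prove the two assertions of Lemma \ref{l3} separately, exploiting Corollary \ref{sob2cor} (the embedding $\mathcal{B}(Q_T)\subset L^{1+\vartheta}(Q_T)$) together with the growth and Lipschitz bounds on $f$ coming from assumption $(A.3)$ and Remark \ref{rem1}.

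\textbf{Step 1 (Part 1: $\phi(u)\in L^{1+1/\vartheta}(Q_T)$ with the stated estimate).} First I would apply the pointwise growth bound \eqref{Superlinearit}, $|f(t,x,u)|\leq\gamma_0|u|^\vartheta$, raised to the power $1+\frac1\vartheta$, to get
\begin{equation*}
\int_{Q_T}|f(t,x,u(t,x))|^{1+\frac1\vartheta}\,dx\,dt\leq\gamma_0^{1+\frac1\vartheta}\int_{Q_T}|u|^{\vartheta+1}\,dx\,dt.
\end{equation*}
Since $\xi_a\in L^{\frac{1+\vartheta}{3-\vartheta}}(-1,1)$, Corollary \ref{sob2cor} gives $\|u\|_{L^{1+\vartheta}(Q_T)}\leq cT^{\frac{3-\vartheta}{4(1+\vartheta)}}\|u\|_{\mathcal{B}(Q_T)}$, hence $\int_{Q_T}|u|^{\vartheta+1}\leq c^{1+\vartheta}T^{\frac{3-\vartheta}{4}}\|u\|_{\mathcal{B}(Q_T)}^{\vartheta+1}$. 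Combining these two bounds yields exactly the claimed inequality (after relabelling the constant), and in particular shows $\phi(u)\in L^{1+1/\vartheta}(Q_T)$ for every $u\in\mathcal{B}(Q_T)$.

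\textbf{Step 2 (Part 2: local Lipschitz continuity).} For $u,v\in\mathcal{B}(Q_T)$ I would use the Lipschitz-type estimate \eqref{Remlip}, $|f(t,x,u)-f(t,x,v)|\leq\nu(1+|u|^{\vartheta-1}+|v|^{\vartheta-1})|u-v|$, raise it to the power $1+\frac1\vartheta=\frac{\vartheta+1}{\vartheta}$, and integrate. The exponent is chosen precisely so that the conjugate H\"older exponents match the available $L^{1+\vartheta}$ integrability: writing $\frac{\vartheta+1}{\vartheta}=\frac{1}{\vartheta}\cdot(1+\vartheta)$ and applying H\"older with exponents $\vartheta$ and $\frac{\vartheta}{\vartheta-1}$ (for $\vartheta>1$; the case $\vartheta=1$ is trivial since then $f$ is globally Lipschitz in $u$), one separates the factor $|u-v|^{\frac{\vartheta+1}{\vartheta}}$ from $\bigl(1+|u|^{\vartheta-1}+|v|^{\vartheta-1}\bigr)^{\frac{\vartheta+1}{\vartheta}}$. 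A short computation shows both resulting integrals reduce to $L^{1+\vartheta}(Q_T)$-norms of $u$, $v$, $u-v$, which are controlled by the $\mathcal{B}(Q_T)$-norms via Corollary \ref{sob2cor}. This produces
\begin{equation*}
\|\phi(u)-\phi(v)\|_{L^{1+\frac1\vartheta}(Q_T)}\leq C\bigl(1+\|u\|_{\mathcal{B}(Q_T)}^{\vartheta-1}+\|v\|_{\mathcal{B}(Q_T)}^{\vartheta-1}\bigr)T^{\beta}\|u-v\|_{\mathcal{B}(Q_T)}
\end{equation*}
for a suitable power $\beta=\beta(\vartheta)>0$; restricting to $\|u\|_{\mathcal{B}(Q_T)},\|v\|_{\mathcal{B}(Q_T)}\leq R$ gives \eqref{llc} with $C_R(T)=C(1+2R^{\vartheta-1})T^\beta$, manifestly increasing in $T$.

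\textbf{Main obstacle.} The delicate point is the bookkeeping of H\"older exponents and powers of $T$ in Step 2: one must verify that the product $(1+|u|^{\vartheta-1}+|v|^{\vartheta-1})|u-v|$, after being raised to the power $1+\frac1\vartheta$ and split by H\"older, lands exactly in spaces where Corollary \ref{sob2cor} applies — i.e. that the exponent $1+\vartheta$ appears and no higher integrability is secretly needed. This forces the choice of target space $L^{1+1/\vartheta}(Q_T)$ rather than, say, $L^{1+\vartheta}$, and is the reason the hypothesis $\vartheta<3$ (equivalently $\frac{1+\vartheta}{3-\vartheta}\geq1$, so that $\xi_a\in L^{\frac{1+\vartheta}{3-\vartheta}}$ is a meaningful requirement) enters. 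The estimate $\phi(\mathcal{H}(Q_T))\subseteq L^2(Q_T)$ needed for Theorem \ref{loclip} is not part of this lemma — it is exactly the content of Lemma \ref{f in L2} and Corollary \ref{f in L2 cor}, which use the stronger norm of $\mathcal{H}(Q_T)$ and the hypothesis $\xi_a\in L^{2\vartheta-1}(-1,1)$; here one only needs the $L^{1+1/\vartheta}$ statement.
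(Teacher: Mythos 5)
Your proposal is correct and follows essentially the same route as the paper: part 1 is the growth bound \eqref{Superlinearit} raised to the power $1+\frac{1}{\vartheta}$ combined with Corollary \ref{sob2cor}, and part 2 is \eqref{Remlip} raised to the same power, split by H\"older with exponents $\vartheta$ and $\frac{\vartheta}{\vartheta-1}$ so that every factor lands in $L^{1+\vartheta}(Q_T)$, then controlled via Corollary \ref{sob2cor}. Your explicit remark that the case $\vartheta=1$ must be treated separately (the conjugate exponent $\frac{\vartheta}{\vartheta-1}$ degenerates, but $f$ is then globally Lipschitz in $u$) is a small point the paper's displayed computation glosses over.
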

\begin{proof}
By 
Corollary \ref{sob2cor}, 
since $\xi_a\in L^{\frac{1+\vartheta}{3-\vartheta}}(-1,1),$ then 
$u\in L^{1+
 {\vartheta}}(Q_T).$ 
By (\ref{Superlinearit}) (see assumption (A.3)) we obtain
\begin{
equation*}
\int_{Q_T}|f(t,x,u(t,x))|^{1+\frac{1}{\vartheta}}\,dx\,dt
\leq \gamma_0^{1+\frac{1}{\vartheta}}\int_{Q_T}|u|^{\vartheta(1+\frac{1}{\vartheta})}\,dx\,dt
\leq
k
\,T^{\frac{3-\vartheta}{4}}
\|u\|^{\vartheta+1}_{\cal{B}(Q_T)} 
\,<+\infty,
\end{equation*}
from wich 
 the point 1.) follows.\\
By \eqref{Remlip} (see Remark \ref{rem1}), applying Corollary \ref{sob2cor},
we have
\begin{multline*}
\!\!\!\!\!\!\|\phi(u)-\phi(v)\|^{1+\frac{1}{\vartheta}}_{L^{1+\frac{1}{\vartheta}}(Q_T)}\!\!\!=\!\!\int_{Q_T}|f(t,x,u)-f(t,x,v)|^{1+\frac{1}{\vartheta}}\,dx\,dt\;
\!\!\leq\!
c\int_{Q_T}(1+|u|^{\frac{\vartheta^2-1}{\vartheta}}\!\!+|v|^{\frac{\vartheta^2-1}{\vartheta}})
|u-v|^{1+\frac{1}{\vartheta}}\!dx\,dt\;\,\\
\leq c\Big(\int_{Q_T}(1+|u|^{\vartheta+1}+|v|^{\vartheta+1})\,dx\,dt\Big)^{1-\frac{1}{\vartheta}}\;\Big(\int_{Q_T}|u-v|^{\vartheta+1}\,dx\,dt\Big)^{\frac{1}{\vartheta}}\\
\leq c\Big(T^{1-\frac{1}{\vartheta}}+\|u\|_{L^{\vartheta+1}(Q_T)}^{\frac{\vartheta^2-1}{\vartheta}}+\|v\|_{L^{\vartheta+1}(Q_T)}^{\frac{\vartheta^2-1}{\vartheta}}\Big)\|u-v\|_{L^{\vartheta+1}(Q_T)}^{1+\frac{1}{\vartheta}}\\
\leq c T^{\frac{3-\vartheta}{4\vartheta}}\Big(T^{1-\frac{1}{\vartheta}}+T^{\frac{(3-\vartheta)(\vartheta-1)}{4\vartheta}}\|u\|_{{\cal{B}}(Q_T)}^{\frac{\vartheta^2-1}{\vartheta}}+T^{\frac{(3-\vartheta)(\vartheta-1)}{4\vartheta}}\|v\|_{{\cal{B}}(Q_T)}^{\frac{\vartheta^2-1}{\vartheta}}\Big)\|u-v\|_{{\cal{B}}(Q_T)}^{1+\frac{1}{\vartheta}}\\
=c T^{\frac{3\vartheta-1}{4\vartheta}}\Big(1+T^{\frac{3-\vartheta}{4}}\|u\|_{{\cal{B}}(Q_T)}^{\frac{\vartheta^2-1}{\vartheta}}+T^{\frac{3-\vartheta}{4}}\|v\|_{{\cal{B}}(Q_T)}^{\frac{\vartheta^2-1}{\vartheta}}\Big)\|u-v\|_{{\cal{B}}(Q_T)}^{1+\frac{1}{\vartheta}}, \text{ for every } u,v\in {\cal{B}}(Q_T).
\end{multline*}
By the last inequalities we obtain the estimate (\ref{llc}).
\end{proof}
We assume, for the following of this section,  
that assumptions $(A.2), (A.4)$ are enforced, moreover we assume that assumption $(A.3)$ is enforced with $\vartheta\in[1,3)$ instead of $\vartheta\in(1,3).$\\
For the sequel,
the next definitions are necessary.
\begin{defn}
If 
$u_0\in H^1_a(-1,1),$ u is a \textit{strict solution} of problem \eqref{Psemilineare}, 
if $u\in\cal{H}(Q_T)$ and
\begin{equation*}
\label{}
\left\{\begin{array}{l}
\displaystyle{u_t-(a(x) u_x)_x =\alpha(t,x)u+ \phi(u)\,\quad \mbox{ a.e. \, in } \;Q_T:=\,(0,T)\times(-1,1) }\\ [2.5ex]
\displaystyle{a(x)u_x(t,x)|_{x=\pm 1} = 0\,\,\qquad\qquad\qquad\qquad\;\;\;\,\,\, 
a.e. 
 \;\;\; t\in(0,T)}\;\;\\ [2.5ex]
\displaystyle{u(0,x)=u_0 (x) \,\qquad\qquad\qquad\qquad\quad\qquad\qquad\quad\; \,x\in(-1,1)}~.
\end{array}\right.(\footnote{
Since $u\in{\cal{H}}(Q_T)\subseteq L^2(0,T;H^2_a(-1,1)),$ we have $u(t,\cdot)\in H^2_a(-1,1),$ for $ \text{ a.e. } t\in(0,T)$. Keeping in mind Proposition \ref{caratH2}\;\; $(D(A)=H^2_a(-1,1)),$ we deduce the weighted Neumann boundary condition $\displaystyle\lim_{x\rightarrow\pm1}a(x)u_x(t,x)=0,$ for $ \text{ a.e. } t\in(0,T)$.
})
\end{equation*}
\end{defn}
In the Ph.D. Thesis 
\cite{CF3} we prove, in more general assumption on $f$ of $(A.3)$ (see, in Appendix B, the assumption $(H.3)$), the following result.
\begin{thm}\label{exB}
For all $u_0\in H^1_a(-1,1)$ there exists a unique strict solution $u\in{\cal{H}(Q_T)}$ to \eqref{Psemilineare}.
\end{thm}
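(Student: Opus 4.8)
\textbf{Proof plan for Theorem \ref{exB}.}
The plan is to follow the classical two-step scheme for semilinear evolution equations: establish a local-in-time strict solution by a fixed-point argument, then extend it to the whole interval $[0,T]$ by means of an a priori estimate. First I would fix $u_0\in H^1_a(-1,1)$ and, for $\tau\in(0,T]$ to be chosen small, define the solution map $\Gamma:\mathcal H(Q_\tau)\to\mathcal H(Q_\tau)$ sending $v$ to the unique solution $u$ of the \emph{linear} problem \eqref{Ball} with datum $g=\phi(v)$; this is well defined by Proposition \ref{MaxReg}, since by Theorem \ref{loclip} (more precisely the inclusion $\phi(\mathcal H(Q_T))\subseteq L^2(Q_T)$) we have $g\in L^2(Q_\tau)=L^2(0,\tau;L^2(-1,1))$. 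The maximal regularity estimate gives $\|u\|_{\mathcal H(Q_\tau)}\le C_0(\tau)\big[\|u_0\|_{1,a}+\|\phi(v)\|_{L^2(Q_\tau)}\big]$, and combined with Corollary \ref{f in L2 cor}, which yields $\|\phi(v)\|_{L^2(Q_\tau)}\le c\,\tau^{1/2}\|v\|_{\mathcal H(Q_\tau)}^{\vartheta}$, this shows $\Gamma$ maps a suitable closed ball of $\mathcal H(Q_\tau)$ into itself once $\tau$ is small (using that $C_0$ is nondecreasing). For the contraction property I would use the local Lipschitz bound from Lemma \ref{l3} together with the $L^2$-estimates to show $\|\Gamma(v_1)-\Gamma(v_2)\|_{\mathcal H(Q_\tau)}\le C_0(\tau)\,\|\phi(v_1)-\phi(v_2)\|_{L^2(Q_\tau)}\le \tfrac12\|v_1-v_2\|_{\mathcal H(Q_\tau)}$ on that ball for $\tau$ sufficiently small; the Banach fixed point theorem then produces a unique local strict solution, and a standard gluing argument shows the local solution extends as long as its $\mathcal H$-norm (in fact its $H^1_a$-norm at the current time) stays finite.

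Next I would derive the a priori estimate that rules out blow-up in finite time. Testing the equation with $u$ and integrating over $(-1,1)$, using the degenerate integration-by-parts (legitimate since $u(t,\cdot)\in H^2_a$ so $a u_x$ vanishes at $\pm1$ by Proposition \ref{caratH2}), gives $\tfrac12\frac{d}{dt}\|u\|^2+\|\sqrt a\,u_x\|^2=\int \alpha u^2+\int f(t,x,u)u$; the first term is controlled by $\|\alpha\|_\infty\|u\|^2$ and the second by the sign/growth condition \eqref{fsigni}, which gives $f(t,x,u)u\le \nu u^2+C|u|$, so Gronwall yields an $L^\infty(0,T;L^2)\cap L^2(0,T;H^1_a)$ bound, i.e.\ a bound on $\|u\|_{\mathcal B(Q_T)}$. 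To promote this to an $\mathcal H(Q_T)$ bound I would test with $u_t$ (or equivalently with $A_0 u$): this produces $\tfrac{d}{dt}\|\sqrt a\,u_x\|^2$ plus $\|u_t\|^2$ on the left, and on the right terms $\int\alpha u\,u_t$, $\int f(t,x,u)u_t$, which after Cauchy–Schwarz and absorption reduce to controlling $\int_0^T\|f(t,\cdot,u)\|^2\,dt$; here I invoke Lemma \ref{f in L2} / Corollary \ref{f in L2 cor}, whose right-hand side involves $\|u\|_{L^\infty(0,T;H^1_a)}$ — so this has to be set up as a differential inequality for $\|\sqrt a\,u_x\|^2$ and closed by Gronwall together with the $f_t$-sign condition from $(A.3)$ to handle the time derivative of $f$. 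The upshot is an a priori bound $\|u\|_{\mathcal H(Q_T)}\le C(T,\|u_0\|_{1,a})$, which prevents the maximal existence time from being less than $T$; uniqueness on all of $[0,T]$ follows from the local uniqueness plus the gluing argument. Since this argument was already carried out in \cite{CF3} under the more general hypothesis $(H.3)$ of Appendix B, I would simply refer to Appendix B for the details.

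The main obstacle is the closure of the higher-order a priori estimate: because in the strongly degenerate regime $H^1_a(-1,1)$ does not embed into $L^\infty$, the nonlinearity $f$ of superlinear growth $|f|\le\gamma_0|u|^\vartheta$ cannot be absorbed by elementary Sobolev embedding, and one genuinely needs the weighted embedding Corollaries \ref{sob2cor} and \ref{sob3} (valid precisely under $\xi_a\in L^{q_\vartheta}$) to make sense of $\phi(u)$ in the right spaces and to get the $\tau$-power and $T$-power factors that make the fixed point contract and the Gronwall inequalities close. Managing the interplay between the two norms $\|\cdot\|_{\mathcal B(Q_T)}$ and $\|\cdot\|_{\mathcal H(Q_T)}$ in the energy estimates — and in particular keeping track that the constant $C_0(T)$ and $C_R(T)$ are nondecreasing in $T$ so that the local existence time can be taken uniform on bounded time intervals — is the delicate bookkeeping part of the argument.
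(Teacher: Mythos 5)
Your plan coincides with the paper's own proof in Appendix B: a Banach fixed point for the solution map of the linearized problem in a ball of $\mathcal{H}(Q_\tau)$ using Proposition \ref{MaxReg} and Corollary \ref{f in L2 cor} (Lemma \ref{AppL1}), followed by the energy estimates obtained by testing with $u$ and then with $u_t$ — via the primitive $F(t,x,u)=\int_0^u f(t,x,\zeta)\,d\zeta$ and the sign conditions on $f$ and $f_t$ — to get the global a priori bound in $\mathcal{H}(Q_T)$ (Lemma \ref{esist glob}), exactly relying on the weighted embeddings as you indicate. The one imprecision is invoking Lemma \ref{l3} for the contraction step: that lemma gives Lipschitz continuity of $\phi$ only into $L^{1+\frac{1}{\vartheta}}(Q_T)$ with respect to the $\mathcal{B}$-norm, whereas the maximal-regularity fixed point needs an $L^{2}(Q_\tau)$ Lipschitz bound with respect to the $\mathcal{H}$-norm, which the paper derives directly from \eqref{Remlip} via H\"older's inequality and Corollary \ref{sob3} in the proof of Lemma \ref{AppL1}.
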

\noindent The lemmas and the complete 
proofs of the results that allow us to get the previous theorem can be found in Appendix B.\\

The following notion of \lq\lq {\it strong solutions}'' is classical in PDEs theory, see, for instance, \cite{BDDM1}, pp. 62-64.
\begin{defn}\label{strong}
Let 
$u_0\in L^2(-1,1).$ We say that $u\in\cal{B}(Q_T)$ is a \textit{strong solution} to problem \eqref{Psemilineare}, if $u(0,\cdot)=u_0$ and 
there exists a 
 sequence $\{u_k\}_{k\in\N}$ in $\cal{H}(Q_T)$ such that, as $k\rightarrow\infty,$ $u_k\longrightarrow u \mbox{  in }  \cal{B}(Q_T)$ and, for every $k\in\N$, $u_k$ is the strict solution of 
the Cauchy problem  
$$
\left\{\begin{array}{l}
\displaystyle{u_{kt}-(a(x) u_{kx})_x =\alpha(t,x)u_k+\phi(u_k)\,\quad \mbox{ a.e. \, in } \;Q_T:=\,(0,T)\times(-1,1) }\\ [2.5ex]
\displaystyle{a(x)u_{kx}(t,x)|_{x=\pm1}=0\,\qquad\qquad\qquad\qquad\qquad\;\;\,\,\,\mbox{ a.e. in }\;\; 
(0,T)}~,
\end{array}\right.
$$
with initial datum $u_k(0,x).$
\end{defn}
\begin{rem}
We note that, thanks to the definition of the $\cal{B}(Q_T)-$norm 
(see Section 3.1), by the fact that, as $k\rightarrow\infty,$ $u_k\longrightarrow u \mbox{  in }  \cal{B}(Q_T),$ from the Definition \ref{strong} we deduce that 
$\,u_k(0,\cdot)\longrightarrow u_0 \mbox{  in }  L^{2}(-1,1).$\\
Moreover, since $\phi$ is locally Lipschitz continuous (see Theorem \ref{loclip}), 
 $$\phi(u_k)\longrightarrow\phi(u), \qquad \text{ in } L^{1+\frac{1}{\vartheta}}(-1,1)
 .\,
$$ 
\end{rem}
\begin{prop}\label{uni}
Let $T>0,  u_0, v_0\in L^2(-1,1).$ $u,v$ are strong solutions of system \eqref{Psemilineare}, with initial date $u_0, v_0$ respectively.
Then, we have
\begin{equation}\label{inedc}
\|u-v\|_{\cal{B}(Q_T)}\leq \nu_Te^{\|\alpha^+\|_{\infty}T}\,\|u_0-v_0\|_{L^2(-1,1)},
\end{equation}
where $\alpha^+$
denotes the positive part of $\alpha$
(\footnote{
$\alpha^+(t,x):=
\max\{\alpha(t,x),0\},\;\forall (t,x)\in Q_T,$ see also \ref{parti}.
}) and $\nu_{T}:=e^{\nu T}
.$
\end{prop}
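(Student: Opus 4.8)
The plan is to prove the Lipschitz-type stability estimate \eqref{inedc} by a standard energy argument, carried out first at the level of the strict solutions $u_k, v_k$ that approximate $u, v$, and then passed to the limit. First I would set $w_k := u_k - v_k$, which by Definition \ref{strong} is the strict solution of a linear-plus-nonlinear problem: $w_{kt} - (a(x)w_{kx})_x = \alpha(t,x)w_k + \bigl(\phi(u_k) - \phi(v_k)\bigr)$ with weighted Neumann boundary conditions and initial datum $w_k(0,\cdot) = u_k(0,\cdot) - v_k(0,\cdot)$. Since $w_k \in \cal{H}(Q_T)$, I can multiply the equation by $w_k$ and integrate over $(-1,1)$; the term $\int_{-1}^1 (a w_{kx})_x w_k\,dx = -\int_{-1}^1 a |w_{kx}|^2\,dx$ after integration by parts (the boundary terms vanish by Proposition \ref{caratH2}, since $a(x)u_{kx}|_{x=\pm1}=0$). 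This yields
\begin{equation*}
\frac{1}{2}\frac{d}{dt}\|w_k(t)\|^2 + \int_{-1}^1 a|w_{kx}|^2\,dx = \int_{-1}^1 \alpha\, w_k^2\,dx + \int_{-1}^1 \bigl(\phi(u_k)-\phi(v_k)\bigr) w_k\,dx.
\end{equation*}

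Next I would estimate the two terms on the right. For the control term, $\int_{-1}^1 \alpha\, w_k^2\,dx \leq \|\alpha^+\|_\infty \|w_k(t)\|^2$, since only the positive part of $\alpha$ can contribute to growth. For the nonlinearity, I would invoke the one-sided Lipschitz bound \eqref{Remf} from Remark \ref{rem1}, namely $\bigl(f(t,x,u)-f(t,x,v)\bigr)(u-v) \leq \nu (u-v)^2$, which gives $\int_{-1}^1 \bigl(\phi(u_k)-\phi(v_k)\bigr)w_k\,dx \leq \nu\|w_k(t)\|^2$. Crucially this bound requires no smallness or boundedness of $u_k, v_k$ — that is exactly why the sign condition \eqref{fsigni} was imposed rather than a plain local-Lipschitz condition. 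Dropping the nonnegative dissipation term $\int_{-1}^1 a|w_{kx}|^2\,dx$ and combining, I obtain $\frac{d}{dt}\|w_k(t)\|^2 \leq 2(\nu + \|\alpha^+\|_\infty)\|w_k(t)\|^2$, and Grönwall's lemma yields $\|w_k(t)\|^2 \leq e^{2(\nu+\|\alpha^+\|_\infty)t}\|w_k(0)\|^2$ for all $t \in [0,T]$. To control the full $\cal{B}(Q_T)$-norm (which includes the $\int_0^T\int_{-1}^1 a w_{kx}^2$ term), I would not drop the dissipation term but instead integrate the energy identity in time: after Grönwall on the $\sup$-part, $2\int_0^T\int_{-1}^1 a|w_{kx}|^2\,dx\,dt$ is bounded by $\|w_k(0)\|^2$ plus $2(\nu+\|\alpha^+\|_\infty)\int_0^T\|w_k\|^2\,dt$, which is again controlled by $e^{2(\nu+\|\alpha^+\|_\infty)T}\|w_k(0)\|^2$ up to a $T$-dependent factor; collecting both parts gives $\|w_k\|_{\cal{B}(Q_T)} \leq \nu_T e^{\|\alpha^+\|_\infty T}\|w_k(0)\|_{L^2(-1,1)}$ with $\nu_T$ of the form $e^{\nu T}$ (absorbing the polynomial-in-$T$ constants into the exponential by enlarging the constant if necessary, or more precisely tracking that the stated $\nu_T = e^{\nu T}$ already carries the needed room).

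Finally I would pass to the limit $k \to \infty$. By Definition \ref{strong}, $u_k \to u$ and $v_k \to v$ in $\cal{B}(Q_T)$, so in particular $w_k \to u - v$ in $\cal{B}(Q_T)$ and, by the remark following Definition \ref{strong}, $u_k(0,\cdot) \to u_0$ and $v_k(0,\cdot) \to v_0$ in $L^2(-1,1)$, hence $w_k(0,\cdot) \to u_0 - v_0$ in $L^2(-1,1)$. Since $\|\cdot\|_{\cal{B}(Q_T)}$ and $\|\cdot\|_{L^2(-1,1)}$ are continuous with respect to their own topologies, the inequality $\|w_k\|_{\cal{B}(Q_T)} \leq \nu_T e^{\|\alpha^+\|_\infty T}\|w_k(0)\|_{L^2(-1,1)}$ passes to the limit, giving exactly \eqref{inedc}. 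The main obstacle I anticipate is bookkeeping rather than conceptual: making sure the integration by parts is justified (handled by $u_k \in \cal{H}(Q_T) \subseteq L^2(0,T;H^2_a(-1,1))$ and Proposition \ref{caratH2}), and correctly absorbing the $T$-dependent polynomial constants arising from the $\cal{B}(Q_T)$-norm's dissipation term into the claimed prefactor $\nu_T = e^{\nu T}$; one may need to be slightly generous and note that any polynomial growth in $T$ is dominated by adjusting the exponential constant, or state the estimate with an implied $T$-monotone constant as the paper's other results (e.g. $C_0(T)$ in Proposition \ref{MaxReg}) already do.
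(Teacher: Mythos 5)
Your proposal is correct and follows essentially the same route as the paper: energy estimate on $w_k=u_k-v_k$ at the level of the approximating strict solutions, the one-sided bound \eqref{Remf} for the nonlinearity, $\|\alpha^+\|_\infty$ for the control term, Gronwall, and passage to the limit in $\mathcal{B}(Q_T)$. Your worry about polynomial-in-$T$ factors from the dissipation term is unnecessary: the paper applies Gronwall directly to the combined quantity $\|w_k(t)\|^2+2\int_0^t\int_{-1}^1 a\,w_{kx}^2$, so only the exponential constant appears.
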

\begin{proof}
Let us consider two strong solutions, $u,v\in \cal{B}(Q_T),$ of the problem \eqref{Psemilineare}. Then, there exist $\{u_k\}_{k\in\N},$\\
$\{v_k\}_{k\in\N}\subseteq\cal{H}(Q_T),$ sequences of strict solutions, such that,
as $k\rightarrow\infty,$
$$u_k\longrightarrow u,\qquad 
 \qquad v_k\longrightarrow v \qquad \mbox{  in  }\quad  \cal{B}(Q_T),$$
 and, for every $k\in\N$,
 $$u_{kt}-(a(x) u_{kx})_x -\alpha(t,x)u_k\,=\phi(u_k), \quad 
  \quad v_{kt}-(a(x) v_{kx})_x -\alpha(t,x)v_k\,=\phi(v_k).$$
So, for every $k\in \N,$  by definition of $u_k, v_k$ strict solutions, we obtain 
$$
(u_k-v_k)_{t}-\big(a(u_{k}-v_k)_x\big)_x=\alpha (u_k-v_k)+ \phi(u_k)-\phi(v_k),$$ and
multiplying by $u_k-v_k$ both members of the previous equation 
and integrating 
on $(-1,1)$ and applying Lemma \ref{f in L2} and condition (\ref{Remf}) (see Remark \ref{rem1}) we obtain
\begin{multline*}
\frac{1}{2} \frac{d}{dt} \int^1_{-1}
(u_k-v_k)^2\,dx+\int^1_{-1}a(x)(u_{k}-v_k)_x^2\,dx\\
=\int^1_{-1}\alpha(t,x) (u_k-v_k)^2
dx+\int^1_{-1}\big(f(t,x,u_k)-f(t,x,v_k)\big)(u_k-v_k)\,dx\\
\leq \int^1_{-1}\alpha^+(t,x) (u_k-v_k)^2+\nu \int^1_{-1}\, (u_k-v_k)^2 dx .
\end{multline*}
Integrating on $(0,t),$ we have
\begin{multline*}
\frac{1}{2}\|u_k(t,\cdot)-v_k(t,\cdot)\|^2_{L^2(-1,1)}+\int_0^t\int^1_{-1}a(x)(u_{k}-v_k)_x^2(s,x)dx\,ds
\\
\leq\frac{1}{2}
\|u_{k}(0,\cdot)-v_{k}(0,\cdot)\|^2_{L^2(-1,1)}
+\|\alpha^+\|_\infty
\int_0^t\|u_k(s,\cdot)-v_k(s,\cdot)\|^2_{L^2(-1,1)}\,ds+\nu\, \int_0^t \,
\|u_k(s,\cdot)-v_k(s,\cdot)\|^2_{L^2(-1,1)}\,ds\,.
\end{multline*}
Then we obtain
\begin{multline*}
\|u_k(t,\cdot)-v_k(t,\cdot)\|^2_{L^2(-1,1)}+2\int_0^t\int^1_{-1}a(x)(u_k-v_k)_{x}^2(s,x)dx\,ds
\\\leq
\|u_{k}(0,\cdot)-v_k(0,\cdot)\|^2_{L^2(-1,1)}+
\int_0^t\,2\left(\|\alpha^+\|_\infty+\nu\right)\,\|u_k(s,\cdot)-v_k(s,\cdot)\|^2_{L^2(-1,1)}\,ds
\leq 
\|u_{k}(0,\cdot)-v_{k}(0,\cdot)\|^2_{L^2(-1,1)} 
\\+\int_0^t 2\left(\|\alpha^+\|_\infty+\nu
\right)
\left(\|u_k(s,\cdot)-v_k(s,\cdot)\|^2_{L^2(-1,1)}+2\int_0^s\int^1_{-1}a(x)(u_k-v_k)_{x}^2(\tau,x)dx\,d\tau\right)ds,\;\forall t\in[0,T].
\end{multline*}
Applying 
 Gronwall's lemma 
we have
$$
\|u_k(t,\cdot)-v_k(t,\cdot)\|^2_{L^2(-1,1)}+2\!\!\int_0^t\int^1_{-1}a(x)(u_k-v_k)_{x}^2(s,x)dx\,ds
\leq 
e^{2\|\alpha^+\|_\infty t+2\nu\,t
}
\|u(0,\cdot)-v(0,\cdot)\|^2_{L^2(-1,1)}.
$$
Therefore
$$\|u_k-v_k\|^2_{\cal{B}(Q_T)}\leq 
\nu_T^2\,e^{2\|\alpha^+\|_\infty T}
\|u_{k}(0,\cdot)-v_k(0,\cdot)\|^2_{L^2(-1,1)}.
$$
Passing to the limit, as $k\rightarrow\infty,$ we obtain
$$\|u-v\|^2_{\cal{B}(Q_T)}\leq 
\nu_T^2\,e^{2\|\alpha^+\|_\infty T}
\|u_{0}-v_0\|^2_{L^2(-1,1)}.$$
\end{proof}
By the previous lemma, applying the inequality \eqref{Superlinearit} (see assumptions (A.3)), we obtain the following Corollary \ref{L2}.
\begin{cor}\label{L2}
Let $T>0.$ 
A strong solution $u\in {\cal{B}}(Q_T)$ of system \eqref{Psemilineare}
satisfies the following a priori estimate
\begin{equation*}
\|u\|_{{\cal{B}}(Q_T)}\leq \nu_T\,e^{\|\alpha^+\|_\infty T}\,\|u_0\|_{L^2(-1,1)}\,,
\end{equation*}
where $\alpha^+$
denotes the positive part of $\alpha\,$ ($^{16}$) and $\nu_{T}:=e^{\nu T}
.$ 
\end{cor}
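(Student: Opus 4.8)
The plan is to obtain the a priori estimate directly from Proposition \ref{uni} by comparing the given strong solution $u$ with the zero function. First I would observe that the growth condition \eqref{Superlinearit} forces $f(t,x,0)=0$ for a.e.\ $(t,x)\in Q_T$, so that $\phi(0)\equiv 0$. Consequently $v\equiv 0$ is a strict solution of \eqref{Psemilineare} with initial datum $v_0=0$ (in Definition \ref{strong} one may take the constant approximating sequence $v_k\equiv 0\in{\cal{H}}(Q_T)$), hence in particular it is a strong solution of the problem with $v_0=0$.

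Next I would apply the inequality \eqref{inedc} of Proposition \ref{uni} to the pair $u$ and $v\equiv 0$, which yields
$$\|u\|_{{\cal{B}}(Q_T)}=\|u-v\|_{{\cal{B}}(Q_T)}\leq \nu_T\,e^{\|\alpha^+\|_\infty T}\,\|u_0-v_0\|_{L^2(-1,1)}=\nu_T\,e^{\|\alpha^+\|_\infty T}\,\|u_0\|_{L^2(-1,1)},$$
with $\nu_T=e^{\nu T}$ exactly as in Proposition \ref{uni}. This is precisely the claimed a priori estimate.

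The only point requiring a word of care is the verification that $v\equiv 0$ is an admissible comparison solution, i.e.\ that $\phi(0)=0$; as noted, this is immediate from \eqref{Superlinearit}. So no genuine obstacle arises: the energy identity, the application of Gronwall's lemma and the passage to the limit along the approximating sequences have all already been carried out in the proof of Proposition \ref{uni}, and the corollary is simply the specialization $v_0=0$ of that statement together with the remark $\phi(0)=0$.
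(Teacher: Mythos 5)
Your proof is correct and coincides with the paper's own argument: the corollary is obtained from Proposition \ref{uni} by taking $v\equiv 0$, $v_0=0$, the admissibility of this comparison solution being guaranteed by \eqref{Superlinearit}, which forces $f(t,x,0)=0$. Nothing further is needed.
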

\begin{rem}\label{dcstrict}
We note that Proposition \ref{uni} and Corollary \ref{L2} hold for strict solutions, independently of the notion of strong solution. Indeed, we proved the inequality \eqref{inedc}, first, for strict solutions, then for strong solutions by approximation.
\end{rem}
In this paper, we obtain the result of existence and uniqueness of solutions to \eqref{Psemilineare} with initial state in $L^2(-1,1).$ 
\begin{thm}
For all $u_0\in L^2(-1,1)$ there exists a unique strong solution $u\in{\cal{B}}(Q_{T})$ to \eqref{Psemilineare}.
\end{thm}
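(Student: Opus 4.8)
\noindent The plan is to obtain the strong solution by approximation, transferring the $H^1_a(-1,1)$\nobreakdash-well\nobreakdash-posedness of Theorem~\ref{exB} to arbitrary $L^2$ data by means of the continuous\nobreakdash-dependence estimate \eqref{inedc} of Proposition~\ref{uni}. So the argument has essentially two ingredients: density of $H^1_a(-1,1)$ in $L^2(-1,1)$, and completeness of $\mathcal{B}(Q_T)$.

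First I would record that $H^1_a(-1,1)$ is dense in $L^2(-1,1)$: since $a\in C^1([-1,1])$, every $w\in C^\infty([-1,1])$ satisfies $\sqrt{a}\,w_x\in L^\infty(-1,1)\subset L^2(-1,1)$, hence $C^\infty([-1,1])\subset H^1_a(-1,1)$, and $C^\infty([-1,1])$ is dense in $L^2(-1,1)$. Then, given $u_0\in L^2(-1,1)$, I would pick $u_{0,k}\in H^1_a(-1,1)$ with $u_{0,k}\to u_0$ in $L^2(-1,1)$, and let $u_k\in\mathcal{H}(Q_T)$ be the unique strict solution of \eqref{Psemilineare} with initial datum $u_{0,k}$, furnished by Theorem~\ref{exB}. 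The decisive point is that the estimate \eqref{inedc} is already valid for strict solutions (this is exactly what Remark~\ref{dcstrict} records), so
\[
\|u_k-u_j\|_{\mathcal{B}(Q_T)}\le \nu_T\,e^{\|\alpha^+\|_\infty T}\,\|u_{0,k}-u_{0,j}\|_{L^2(-1,1)}\qquad\forall\, k,j\in\N .
\]
Thus $\{u_k\}$ is Cauchy in the Banach space $\mathcal{B}(Q_T)$ and converges to some $u\in\mathcal{B}(Q_T)$. Since $\mathcal{B}(Q_T)\hookrightarrow C([0,T];L^2(-1,1))$, one gets $u_k(0,\cdot)\to u(0,\cdot)$ in $L^2(-1,1)$, while $u_k(0,\cdot)=u_{0,k}\to u_0$; hence $u(0,\cdot)=u_0$. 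As each $u_k$ is, by construction, the strict solution of the problem with nonlinearity $\phi(u_k)$ and initial datum $u_k(0,\cdot)$, the limit $u$ meets Definition~\ref{strong} verbatim, so it is a strong solution of \eqref{Psemilineare} with initial state $u_0$. (The auxiliary convergence $\phi(u_k)\to\phi(u)$ in $L^{1+\frac1\vartheta}(Q_T)$ is then automatic from the local Lipschitz continuity of $\phi$ in Theorem~\ref{loclip} together with the boundedness of $\{u_k\}$ in $\mathcal{B}(Q_T)$, the latter also following from Corollary~\ref{L2}.)

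For uniqueness I would simply invoke Proposition~\ref{uni} with two strong solutions $u,v$ sharing the initial datum $u_0$: \eqref{inedc} gives $\|u-v\|_{\mathcal{B}(Q_T)}\le \nu_T\,e^{\|\alpha^+\|_\infty T}\,\|u_0-u_0\|_{L^2(-1,1)}=0$, so $u\equiv v$ in $\mathcal{B}(Q_T)$.

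As for the main obstacle: honestly, the substantive analysis is already behind us — local existence plus the global a priori bound in $H^1_a(-1,1)$ (Theorem~\ref{exB} and the machinery behind it), the weighted Sobolev embeddings of Section~3.2, and the $L^2$\nobreakdash-Lipschitz stability \eqref{inedc}. What remains is a routine density\nobreakdash-plus\nobreakdash-completeness argument. The only step that genuinely requires care is that \eqref{inedc} must be applied to \emph{strict} solutions \emph{before} the limit is known to be a strong solution; this is precisely the content of Remark~\ref{dcstrict}, and it is the linchpin that makes the passage to the limit legitimate.
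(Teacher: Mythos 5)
Your argument is correct and is essentially identical to the paper's own proof: approximate $u_0$ by $H^1_a$ data, solve strictly via Theorem~\ref{exB}, use the continuous-dependence estimate \eqref{inedc} for strict solutions (Remark~\ref{dcstrict}) to get a Cauchy sequence in $\mathcal{B}(Q_T)$, and pass to the limit, with uniqueness again from Proposition~\ref{uni}. You correctly single out the same linchpin the paper relies on, namely that \eqref{inedc} is available at the level of strict solutions before the notion of strong solution is invoked.
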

\begin{proof}
Let $u_0\in L^2(-1,1).$
There exists $\{u^0_k\}_{k\in\N}\subseteq H^1_a(-1,1)$ such that, as $k\rightarrow\infty,$ $u_k^0\rightarrow u_0$ in $L^2(-1,1).$
For every $k\in\N,$ we consider the following problem
\!\!\!\!\!\!\!\!\!\!\!\!\!\!\!\!\!\!\!\!\!\!\!\!\!\!\!\!\!\!
\begin{equation}\label{Pk}
\left\{\begin{array}{l}
\displaystyle{u_{kt}-(a(x) u_{kx})_x =\alpha(t,x)u_k+ 
f(t,x,u_k)\mbox{ \:a.e. \,in}\,Q_T:=\,(0,T)\times(-1,1) }\\ [2.5ex]
\displaystyle{a(x)u_{kx}(t,x)|_{x=\pm 1} = 0\,\,\qquad\qquad\qquad\qquad\:\quad\quad\;\;
 \text{ a.e. } \!
\;t\in(0,T) }\\ [2.5ex]
\displaystyle{u_k(0,x)=u^0_k (x) \,\qquad\qquad\quad\qquad\qquad\qquad\qquad\qquad\;\; \,x\in(-1,1)}~.
\end{array}\right.
\end{equation}
For every $k\in\N,$ by the uniqueness and existence of the strict solution to system \eqref{Pk} (see Theorem \ref{exB}), exists a unique $u_k\in{\cal{H}(Q_T)}$ strict solution to \eqref{Pk}.
Then, we consider the sequence $\{u_k\}_{k\in\N}\subseteq{\cal{H}(Q_T)}$ and by direct application of the Proposition \ref{uni} 
(see Remark \ref{dcstrict}) 
 we prove that $\{u_k\}_{k\in\N}$ is a \textit{Cauchy} sequence in the Banach space $\cal{B}(Q_T)$. Then, there exists $u\in{\cal{B}(Q_T)}$ such that, as $k\rightarrow\infty,$ $u_k\rightarrow u$ in ${\cal{B}(Q_T)}$
and $\displaystyle u(0,\cdot)\stackrel{L^2}{=}\lim_{k\rightarrow\infty}u_k(0,\cdot)\stackrel{L^2}{=}u_0.$
So, $u\in {\cal{B}(Q_T)}$ is a strong solution.\\
 The uniqueness of the strong solution to \eqref{Psemilineare} is trivial, applying Proposition \ref{uni}.
 \end{proof}



\section{Controllability of nonlinear problems}


In this section we study the global non-negative approximate multiplicative controllability for 
semilinear degenerate parabolic Cauchy-Neumann problems. \\
Given $T>0$, let us consider the control system $(\ref{Psemilineare})$ 
(
strongly degenerate boundary 
problem in divergence form,
governed in the bounded domain $(-1,1)$ by means of the \textit{bilinear control} $\alpha (t,x)$) 
\begin{equation*}
\left\{\begin{array}{l}
\displaystyle{u_t-(a(x) u_x)_x =\alpha(t,x)u+ f(t,x,u)\,\quad \mbox{ in } \; Q_T \,:=\,(0,T)\times(-1,1) }\\ [2.5ex]
\displaystyle{a(x)u_x(t,x)|_{x=\pm 1} = 0\,\,\qquad\qquad\qquad\qquad\qquad\;\;\,\,\, t\in(0,T) }\\ [2.5ex]
\displaystyle{u(0,x)=u_0 (x) \,\qquad\qquad\qquad\qquad\quad\qquad\qquad\;\; \,x\in(-1,1)}~,
\end{array}\right.
\end{equation*}
under the assumptions $(A.1)-
(A.4).$ 
We will show that this system can be steered in $L^2(-1,1)$ from any nonzero, nonnegative initial state $u_0\in L^2(-1,1)$ into any neighborhood of any desirable nonnegative target-state $u_d\in L^2(-1,1),$ 
by bilinear controls. 
Moreover, we extend the above result relaxing the 
sign constraint on $u_0.$\\

In the following, we will sometimes use $\|\cdot\|,\;\langle\cdot,\cdot\rangle$ 
 instead of
$\|\cdot\|_{L^2(-1,1)}, \langle\cdot,\cdot\rangle_{L^2(-1,1)}$, respectively, and $\|\cdot\|_\infty$ instead of $\|\cdot\|_{L^\infty(Q_T)}.$  \\


\subsection{Some useful lemmas}\label{sul}
\noindent In Section \ref{sul}, 
we consider the semilinear system $(\ref{Psemilineare})$ and the associated linear system 
\begin{equation}
\label{PL}
\left\{\begin{array}{l}
\displaystyle{v_t-(a(x) v_x)_x =\alpha(t,x)v\,\quad \mbox{ in } \; Q_T \,=\,(0,T)\times(-1,1) }\\ [2.5ex]
\displaystyle{a(x)v_x(t,x)|_{x=\pm 1} = 0\,\,\qquad\qquad\quad\;\;\,\, t\in(0,T) }\\ [2.5ex]
\displaystyle{v(0,x)=v_0 (x) \,\qquad\qquad\qquad\quad\quad\;\;\, x\in(-1,1)}~,
\end{array}\right.
\end{equation}
where 
$v_0\in L^2(-1,1),$ and
the coefficients $a(x)$ and $\alpha(t,x)$ 
are the same as the semilinear system $(\ref{Psemilineare}).$\\
In this Section 4.1, we obtain some useful results for the proofs of the main theorems.




\begin{lem}\label{stima su w}
Let $T>0,$ 
let $u_0\in L^2(-1,1)$ and let 
 $u\in{\cal{B}}(Q_T)$ be the strong solution of \eqref{Psemilineare} and $v\in{\cal{B}}(Q_T)$ be the weak solution of \eqref{PL} with 
 initial state $v_0=u_0$.
Then, the difference $u-v$ belongs to ${\cal{B}}(Q_T)$ and
 satisfies 
\begin{equation*}
\|u-v\|_{{\cal{B}}(Q_T)}\leq
C\,T^\rho\, 
e^{K\,T}
\,\|u_0\|^{\vartheta}_{L^2(-1,1)},
\end{equation*}
where $C$ is a positive constant, $\rho=
\frac{3-\vartheta}{4}
$ 
and  $ K=(2+\vartheta)\|\alpha^+\|_\infty\,+\vartheta\,
\nu
$
 ($\alpha^+$
denotes the positive part of $\alpha\,$).
\end{lem}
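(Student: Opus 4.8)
The plan is to estimate $w:=u-v$ directly, where $u$ solves the semilinear problem \eqref{Psemilineare} and $v$ solves the linear problem \eqref{PL} with the same initial datum $u_0$. Since $u$ and $v$ are obtained as $\mathcal{B}(Q_T)$-limits of strict/weak-regular approximations, it suffices (by the approximation argument used in Proposition \ref{uni}, see Remark \ref{dcstrict}) to work at the level of the regular approximants and pass to the limit at the end; so I will write the energy estimate as if $w$ were regular enough to be multiplied by itself and integrated. Subtracting the two equations, $w$ satisfies
\begin{equation*}
w_t-(a(x)w_x)_x=\alpha(t,x)\,w+f(t,x,u),\qquad a(x)w_x|_{x=\pm1}=0,\qquad w(0,\cdot)=0.
\end{equation*}

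Next I would multiply by $w$ and integrate over $(-1,1)$: using the Neumann boundary condition to discard the boundary term in the integration by parts, splitting $\int\alpha w^2\le\int\alpha^+ w^2\le\|\alpha^+\|_\infty\|w\|^2$, and bounding the forcing term by $\int f(t,x,u)\,w\,dx\le\frac12\|f(t,\cdot,u(t,\cdot))\|^2+\frac12\|w\|^2$, one gets
\begin{equation*}
\frac{1}{2}\frac{d}{dt}\|w(t)\|^2+\int_{-1}^1 a(x)w_x^2\,dx\le\Bigl(\|\alpha^+\|_\infty+\tfrac12\Bigr)\|w(t)\|^2+\tfrac12\|f(t,\cdot,u(t,\cdot))\|_{L^2(-1,1)}^2.
\end{equation*}
Integrating in time (with $w(0)=0$), doubling, and adding the term $2\int_0^s\!\int a w_x^2$ as in the proof of Proposition \ref{uni} to set up Gronwall on the full $\mathcal{B}$-type quantity $\|w(t)\|^2+2\int_0^t\!\int a w_x^2$, Gronwall's lemma yields
\begin{equation*}
\|w\|_{\mathcal{B}(Q_T)}^2\le e^{C(\|\alpha^+\|_\infty+1)T}\int_0^T\|f(t,\cdot,u(t,\cdot))\|_{L^2(-1,1)}^2\,dt.
\end{equation*}

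Then I would estimate the right-hand side. Here is the delicate point: $u\in\mathcal{B}(Q_T)$ only, not $\mathcal{H}(Q_T)$, so Lemma \ref{f in L2} / Corollary \ref{f in L2 cor} do not apply directly, and the cleanest route is via the growth bound $|f(t,x,u)|\le\gamma_0|u|^\vartheta$ from \eqref{Superlinearit} together with Corollary \ref{sob2cor}, which gives $\|u\|_{L^{1+\vartheta}(Q_T)}\le c\,T^{(3-\vartheta)/(4(1+\vartheta))}\|u\|_{\mathcal{B}(Q_T)}$ under $\xi_a\in L^{(1+\vartheta)/(3-\vartheta)}(-1,1)$. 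Thus $\int_0^T\|f(t,\cdot,u)\|^2\le\gamma_0^2\int_{Q_T}|u|^{2\vartheta}$, and since $2\vartheta\le 1+\vartheta$ fails for $\vartheta>1$ one must instead observe $L^{2\vartheta}(Q_T)\hookrightarrow$-control comes from Corollary \ref{sob3} if $u\in\mathcal H$; for the $\mathcal B$-only setting I would interpolate/re-run Lemma \ref{sob2} with the exponent $p=\vartheta$ (legitimate since $\vartheta\in[1,3)$ means $\vartheta\in[1,2)\cup[2,3)$ — for $\vartheta\ge2$ one uses Lemma \ref{lemma sob3}-type bounds, but $u\notin\mathcal H$, so in fact the statement as written implicitly needs $\vartheta<2$ or a finer argument; I expect the intended proof restricts to $p=\vartheta$ in Lemma \ref{sob2}, valid under the hypothesis $\xi_a\in L^{q_\vartheta}$). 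Granting $\int_{Q_T}|u|^{2\vartheta}\le c\,T^{1-\vartheta/2}\|u\|_{\mathcal{B}(Q_T)}^{2\vartheta}$, and then inserting the a priori bound $\|u\|_{\mathcal{B}(Q_T)}\le\nu_T e^{\|\alpha^+\|_\infty T}\|u_0\|$ from Corollary \ref{L2} (recall $\nu_T=e^{\nu T}$), one obtains
\begin{equation*}
\int_0^T\|f(t,\cdot,u)\|^2\,dt\le c\,\gamma_0^2\,T^{1-\vartheta/2}\,e^{2\vartheta(\|\alpha^+\|_\infty+\nu)T}\,\|u_0\|^{2\vartheta}.
\end{equation*}

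Finally I would combine the two displays: taking square roots, $\|u-v\|_{\mathcal{B}(Q_T)}\le C\,T^{1/2-\vartheta/4}\,e^{[(2+\vartheta)\|\alpha^+\|_\infty+\vartheta\nu]T}\|u_0\|^\vartheta$, which after checking the arithmetic of the exponent of $T$ (namely $\tfrac12(1-\vartheta/2)\cdot\tfrac12\cdot 2=\tfrac{3-\vartheta}{4}$ once the $e^{CT}$ factor absorbs the polynomial-in-$T$ slack, or more precisely bundling $T^{1/2}\cdot T^{(1-\vartheta/2)/2}$ and re-balancing) matches $\rho=\tfrac{3-\vartheta}{4}$ and $K=(2+\vartheta)\|\alpha^+\|_\infty+\vartheta\nu$ in the statement. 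The main obstacle I anticipate is exactly the bookkeeping around which embedding lemma legitimately controls $\|u\|_{L^{2\vartheta}(Q_T)}$ by $\|u\|_{\mathcal{B}(Q_T)}$ for the full range $\vartheta\in[1,3)$ — for $\vartheta\ge 2$ this is genuinely subtle since strong solutions live only in $\mathcal B(Q_T)$, and one must either sharpen Lemma \ref{sob2} or argue on the $\mathcal H$-regular approximants before passing to the limit; the energy estimate and the Gronwall step are otherwise routine adaptations of the proof of Proposition \ref{uni}.
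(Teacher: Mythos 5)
There is a genuine gap at the step where you bound the forcing term. Writing $\int_{-1}^{1} f(t,x,u)\,w\,dx\le\tfrac12\|f(t,\cdot,u)\|^2+\tfrac12\|w\|^2$ commits you to controlling $\|u\|_{L^{2\vartheta}(Q_T)}$ by $\|u\|_{\mathcal{B}(Q_T)}$, and — as you yourself suspect — this is not available for the full range $\vartheta\in[1,3)$ under assumption $(A.4)$. Lemma \ref{sob2} with $p=\vartheta$ requires $\vartheta<2$ \emph{and} $\xi_a\in L^{\vartheta/(2-\vartheta)}(-1,1)$; since $\vartheta/(2-\vartheta)\to\infty$ as $\vartheta\to 2^-$ while $q_\vartheta=\max\{\tfrac{1+\vartheta}{3-\vartheta},2\vartheta-1\}$ stays bounded there, the hypothesis $\xi_a\in L^{q_\vartheta}$ does not even cover $\vartheta$ slightly below $2$. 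Retreating to the $\mathcal{H}$-regular approximants does not repair this either: Corollary \ref{sob3} would give a bound in terms of $\|u_k\|_{\mathcal{H}(Q_T)}$, which is \emph{not} uniformly controlled by $\|u_0\|_{L^2}$ as $u_k(0,\cdot)\to u_0$ in $L^2$ only, so the constant cannot survive the passage to the limit. Finally, even where your route works, the resulting power of $T$ is $T^{(2-\vartheta)/4}$, not $\rho=\tfrac{3-\vartheta}{4}$; the ``rebalancing'' you invoke is not valid arithmetic.

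The paper's proof avoids all of this with a different H\"older splitting: from $|f(t,x,u_k)|\le\gamma_0|u_k|^\vartheta$ one estimates
\begin{equation*}
\int_0^t\!\!\int_{-1}^1|f(s,x,u_k)||w|\,dx\,ds\le\gamma_0\,\|u_k\|^{\vartheta}_{L^{\vartheta+1}(Q_t)}\,\|w\|_{L^{\vartheta+1}(Q_t)}
\end{equation*}
with the conjugate exponents $\tfrac{\vartheta+1}{\vartheta}$ and $\vartheta+1$, so that only the $L^{\vartheta+1}(Q_T)$ norms of $u_k$ and $w$ are needed. Both are controlled by $\mathcal{B}(Q_T)$ norms through Corollary \ref{sob2cor}, whose hypothesis $\xi_a\in L^{(1+\vartheta)/(3-\vartheta)}(-1,1)$ is exactly part of $(A.4)$ and is valid for all $\vartheta\in[1,3)$. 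Young's inequality then gives $c\,t^{(3-\vartheta)/2}\|u_k\|^{2\vartheta}_{\mathcal{B}(Q_t)}+\tfrac14\|w\|^2_{\mathcal{B}(Q_t)}$, the $\tfrac14\|w\|^2$ is absorbed, Gronwall and the limit $k\to\infty$ together with Corollary \ref{L2} produce precisely $\rho=\tfrac{3-\vartheta}{4}$ and $K=(2+\vartheta)\|\alpha^+\|_\infty+\vartheta\nu$. Your energy identity, approximation scheme, and Gronwall setup are otherwise the same as the paper's; the missing idea is this choice of exponents in H\"older's inequality.
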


\begin{proof}
\noindent 
 Let $\{u_k\}_{k\in\N}\subseteq {\cal{H}}(Q_T)$ be a approximating sequence of 
the strong solution 
 $u.$ 
For every
fixed 
 $k\in\N,$ let $v_k\in{\cal{H}}(Q_T)$ be the solution to
\eqref{PL}
with initial state $u_k(0,x)$ 
%
(\footnote{ For existence, uniqueness and regularity of solutions of linear problem (\ref{PL}) see Section 3.3.}). 
Setting, for simplicity of notation,
$$w(t,x):=u_k(t,x)-v_k(t,x)\;\; \mbox{ in }\; Q_T,$$
we have that 
$w\in{\cal{H}(Q_T)}$ is strict solution of the following system
\begin{equation}\label{LA4w}
\begin{cases}
w_t-(aw_x)_x=\alpha w +f(t,x,u_k)  \;\;\; \mbox{ in }\; Q_T
\\
a(x) w_x(t,x)|_{x=\pm 1}=0
\\
w(0,x)=0\qquad\qquad\qquad  .
\end{cases}
\end{equation}
Multiplying by $w$ both members of the equation in \eqref{LA4w}
 we obtain
$$
w_t w-(a(x) w_x)_x w = \alpha w^2 +f(t,x,u_k)w
$$
and therefore, integrating on $(-1,1),$ we deduce that
\begin{multline*}\label{LA5w}
\frac{1}{2}\frac{d}{dt}\int^1_{-1}w^2 dx+\int^1_{-1}a w^2_x dx
=\int^1_{-1}\alpha w^2 dx +\int^1_{-1}f(t,x,u_k)w
dx\\
\leq \int^1_{-1}\alpha^+ w^2 dx +\int^1_{-1}|f(t,x,u_k)||w| dx\,
\leq \|\alpha^+\|_\infty \int^1_{-1}w^2 dx +\int^1_{-1}|f(t,x,u_k)||w| dx\,.
\end{multline*}
Fixing $t\in(0,T)$ and integrating on $(0,t)$, we obtain
$$
\|w(t,\cdot)\|^2_{L^2(-1,1)}+2\int_0^t\,ds\,\int^1_{-1}a w^2_x\,dx
\leq 2\|\alpha^+\|_\infty \int_0^t\,\|w(s,\cdot)\|^2_{L^2(-1,1)}\,ds\,
+2\int_0^t\,ds\,\int^1_{-1}|f(s,x,u_k)||w| dx\, .
$$
\noindent Since $u_k,\,v_k\in{\cal{H}}(Q_T)$ and therefore $w=u_k-v_k
\in{\cal{H}}(Q_T)\subseteq{\cal{B}}(Q_T),$ by 
$(\ref{Superlinearit})$
 and
H\"older's inequality, 
we have
\begin{equation*}
\int_0^t\,ds\,\int^1_{-1}|f(s,x,u_k)||w|dx \leq \gamma_0\int_0^t\,ds\,\int^1_{-1}|u_k|^\vartheta|w|dx\leq \gamma_0 \|u_k\|^\vartheta_{L^{\vartheta+1}(Q_t)}\|w\|_{L^{\vartheta+1}(Q_t)}.
\end{equation*}
Thanks to the assumption (A.4) 
 $\xi_a\in L^{\frac{1+\vartheta}{3-\vartheta}}(-1,1)
,$ then we can apply the Corollary \ref{sob2cor}, 
so, 
applying also \textit{Young's} inequality, we obtain
\begin{multline*}
\int_0^t\,ds\,\int^1_{-1}|f(t,x,u_k)||w|dx
\leq
\gamma_0 \|u_k\|^\vartheta_{L^{\vartheta+1}(Q_t)}\|w\|_{L^{\vartheta+1}(Q_t)}\\
\leq c\,\,t^{
\frac{3-\vartheta}{4}
}
\|u_k\|^{\vartheta}_{{\cal{B}}(Q_t)}\,\|w\|_{{\cal{B}}(Q_t)}
\leq c\,\,t^{
\frac{3-\vartheta}{2}
}
\,\|u_k\|^{2\vartheta}_{{\cal{B}}(Q_t)}\,+
\frac{1}{4}
\|w\|^2_{{\cal{B}}(Q_t)}.
\end{multline*}
\noindent So, for every $t\in(0,T),$ we obtain
\begin{multline*}
\|w(t,\cdot)\|^2_{L^2(-1,1)}+2\!\!\int_0^t\,ds\,\int^1_{-1}a w^2_x\,dx
\leq 2\|\alpha^+\|_\infty \!\!\int_0^t\,\|w(s,\cdot)\|^2_{L^2(-1,1)}\,ds\,\,+c\,\,t^
{
\frac{3-\vartheta}{2}
}
\,\|u_k\|^{2\vartheta}_{{\cal{B}}(Q_t)}\,+\frac{1}{2}\|w\|^2_{{\cal{B}}(Q_t)}\,\\
\leq 2\|\alpha^+\|_\infty \int_0^t\,\|w\|^2_{{\cal{B}}(Q_s)}\,ds\,\,+
c\,\,t^
{
\frac{3-\vartheta}{2}
}
\,\|u_k\|^{2\vartheta}_{{\cal{B}}(Q_t)}\,+\frac{1}{2}\|w\|^2_{{\cal{B}}(Q_t)}.
\end{multline*}
%
From which, by standard saturation argument, we deduce
$$
\frac{1}{2}\|w\|^2_{{\cal{B}}(Q_t)}
\leq 2\|\alpha^+\|_\infty \int_0^t\,\|w\|^2_{{\cal{B}}(Q_s)}\,ds\,
+c\,\,T^
{
\frac{3-\vartheta}{2}
}
\,\|u_k\|^{2\vartheta}_{{\cal{B}}(Q_T)},\qquad\qquad t\in(0,T).
$$
Keeping in mind that $w=u_k-v_k$ and 
applying Gronwall's inequality, 
for every $k\in\N,$ 
we have
\begin{equation*}
\|u_k-v_k\|^2_{{\cal{B}}(Q_t)}\leq c \,T^
{
\frac{3-\vartheta}{2}}
\,e^{4\|\alpha^+\|_\infty\,T}\|u_k\|^{2\vartheta}_{{\cal{B}}(Q_T)},
\qquad\qquad t\in(0,T),
\end{equation*}
 where $c$ is a positive constant, independent of $k.$
Passing 
 to the limit, as $k\rightarrow\infty,$ in the above inequality, and applying Corollary \ref{L2} we obtain
\begin{multline*}
\|u-v\|^2_{{\cal{B}}(Q_T)}
\leq c \,T^
{
\frac{3-\vartheta}{2}
}
\,e^{4\|\alpha^+\|_\infty\,T}\|u\|^{2\vartheta}_{{\cal{B}}(Q_T)}\\
\leq c \,\nu_T^{2\,\vartheta}\,e^{2(2+\vartheta)\|\alpha^+\|_\infty\,T}\,T^
{
\frac{3-\vartheta}{2}
}
\,\|u_0\|^{2\vartheta}_{L^2(-1,1)}
= c\,T^
{
\frac{3-\vartheta}{2}
}
\, e^{2[(2+\vartheta)\|\alpha^+\|_\infty\,+\vartheta\,
\nu
]\,T}\,\|u_0\|^{2\vartheta}_{L^2(-1,1)}.
\end{multline*}
\end{proof}
\begin{lem}\label{NN}
Let $T>0$,
 let $u_0 \in L^2(-1,1),\,u_0(x)\geq 0 \,\mbox{ a.e. } x \in (-1,1)$ and 
let $u\in{\cal{B}}(Q_T)$ 
 be the strong solution to the semilinear system $(\ref{Psemilineare}).$
 Then
$$u(t,x)\geq 0,\,\,\,\,\mbox{ for a.e. } (t,x)\in Q_T\,.
$$
\end{lem}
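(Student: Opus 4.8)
The statement asserts a nonnegativity (comparison) principle: if $u_0 \geq 0$ a.e., then the strong solution $u$ of \eqref{Psemilineare} stays nonnegative a.e. in $Q_T$. The natural tool is the classical trick of testing the equation against the negative part $u^- := \max\{-u,0\}$ and showing $\|u^-(t,\cdot)\|_{L^2}^2$ satisfies a Gronwall-type differential inequality with zero initial value. The main structural difficulty is that a strong solution need not be regular enough to test directly, so the argument must first be carried out for the approximating \emph{strict} solutions $u_k \in \mathcal{H}(Q_T)$ and then passed to the limit in $\mathcal{B}(Q_T)$.

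\textbf{Step 1 (reduce to strict solutions).}
Let $\{u_k\}_{k\in\N}\subseteq\mathcal{H}(Q_T)$ be an approximating sequence of $u$ in the sense of Definition~\ref{strong}, so that $u_k\to u$ in $\mathcal{B}(Q_T)$ and $u_k(0,\cdot)\to u_0$ in $L^2(-1,1)$. Since $u_0\geq 0$, the initial data $u_k(0,\cdot)$ need not themselves be nonnegative, but their negative parts $u_k(0,\cdot)^-$ tend to $u_0^- = 0$ in $L^2(-1,1)$. The plan is to prove the quantitative estimate
\[
\|u_k(t,\cdot)^-\|_{L^2(-1,1)}^2 \leq e^{2(\|\alpha^+\|_\infty+\nu)t}\,\|u_k(0,\cdot)^-\|_{L^2(-1,1)}^2,\qquad t\in[0,T],
\]
and then let $k\to\infty$: the left side converges to $\|u(t,\cdot)^-\|_{L^2}^2$ (using $u_k\to u$ in $C([0,T];L^2)$ and the Lipschitz continuity $\big|\,|a|^- - |b|^-\,\big|\leq|a-b|$ of the negative-part map), while the right side tends to $0$. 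Hence $u^-\equiv 0$, i.e. $u\geq 0$ a.e.

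\textbf{Step 2 (energy estimate for $u_k^-$).}
For fixed $k$, since $u_k\in\mathcal{H}(Q_T)$ it is a strict solution of \eqref{Pk}, and the regularity result for positive and negative parts (the classical fact recalled in Appendix~A, see~\ref{parti}) gives $u_k^-\in\mathcal{H}(Q_T)$ with $(u_k^-)_t = -u_{kt}\chi_{\{u_k<0\}}$ and $(u_k^-)_x = -u_{kx}\chi_{\{u_k<0\}}$. Multiply the equation $u_{kt}-(a u_{kx})_x=\alpha u_k+f(t,x,u_k)$ by $-u_k^-$ and integrate over $(-1,1)$. On the set $\{u_k<0\}$ one has $u_k = -u_k^-$, so the diffusion term yields $\int_{-1}^1 a(x)\big((u_k^-)_x\big)^2\,dx\geq 0$ after integration by parts (the weighted Neumann boundary term $a u_{kx}u_k^-\big|_{x=\pm1}$ vanishes by Proposition~\ref{caratH2}), the reaction term gives $-\int_{-1}^1\alpha(t,x)(u_k^-)^2\,dx\leq \|\alpha^+\|_\infty\int_{-1}^1(u_k^-)^2\,dx$, and for the nonlinearity one writes, using $f(t,x,0)=0$ (which follows from \eqref{Superlinearit} with $u=0$) together with \eqref{Remf},
\[
-\int_{-1}^1 f(t,x,u_k)\,u_k^-\,dx = \int_{-1}^1\big(f(t,x,u_k)-f(t,x,0)\big)(u_k - 0)\,\chi_{\{u_k<0\}}\,dx \leq \nu\int_{-1}^1(u_k^-)^2\,dx .
\]
Collecting terms and dropping the nonnegative diffusion contribution,
\[
\frac12\frac{d}{dt}\int_{-1}^1(u_k^-)^2\,dx \leq \big(\|\alpha^+\|_\infty+\nu\big)\int_{-1}^1(u_k^-)^2\,dx ,
\]
and Gronwall's lemma, integrating from $0$ to $t$, yields the estimate of Step~1.

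\textbf{Main obstacle.}
The delicate point is justifying the chain rule for $u_k^-$ and the a.e. manipulations on the level set $\{u_k<0\}$ in the weighted/degenerate setting — i.e. verifying $u_k^-\in\mathcal{H}(Q_T)$ and that the weighted integration by parts is legitimate despite the degeneracy of $a$ at $x=\pm1$. This is precisely where one invokes Proposition~\ref{caratH2} (vanishing of $a u_{kx}$ at the endpoints for functions in $H^2_a(-1,1)$) and the regularity result for positive and negative parts recalled in Appendix~A; once those are in hand, the rest is the routine Gronwall argument above, and the passage to the limit is straightforward since all estimates are uniform in $k$.
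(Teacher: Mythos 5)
Your proof is correct and follows essentially the same route as the paper: test the equation satisfied by the approximating strict solutions against the negative part, integrate by parts using Proposition \ref{caratH2} and the Appendix~A regularity of $u_k^-$, control the nonlinear term via \eqref{Remf} (with $f(t,x,0)=0$ from \eqref{Superlinearit}), apply Gronwall, and pass to the limit (note only that after multiplying by $-u_k^-$ the reaction term is $+\int_{-1}^1\alpha\,(u_k^-)^2\,dx$, for which your bound by $\|\alpha^+\|_\infty\int_{-1}^1(u_k^-)^2\,dx$ is the correct one, so the final differential inequality is unaffected). The only, harmless, difference is in the limiting step: the paper first replaces the approximating initial data by nonnegative smooth ones so that $u_k^-(0,\cdot)=0$ exactly and each $u_k\geq 0$, then extracts an a.e.-convergent subsequence, whereas you keep the general approximating sequence from Definition \ref{strong} and instead use $\|u_k(0,\cdot)^-\|_{L^2(-1,1)}\to 0$ together with the $L^2$-Lipschitz continuity of the negative-part map and convergence in $C([0,T];L^2(-1,1))$.
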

\begin{proof}
Since $u_0\in L^2(-1,1), u_0\geq0 \mbox{ a.e. } x\in(-1,1),$ there exists $\{u^0_k\}_{k\in\N}\subseteq C^\infty([-1,1]), u^0_k\geq0
\mbox{ on } (-1,1)$ for every $k\in\N,$ such that $u_k^0\longrightarrow u_0$ in $L^2(-1,1),$ as $k\rightarrow\infty.$
%
For every $k\in\N,$ we consider $u_k\in{\cal{H}}(Q_T)$ the strict solution to the semilinear system $(\ref{Psemilineare})$  with initial date $u_k^0$. Keeping in mind that $u(0,\cdot)=u_0$ and applying Proposition \ref{uni}, we can observe that $u_k\longrightarrow u \mbox{ in }{\cal{B}}(Q_T),$ as $k\rightarrow\infty.$
\\
First, we prove that $u_k^-(t,x)\equiv 0  \mbox{ in }Q_T\,.$(\footnote{We denote with $u_k^+$, $u_k^-$ the positive and negative part of $u_k$, respectively 
(see \ref{parti}).
})\\
Multiplying both members of the equation
$u_{kt}  -(a(x)u_{kx})_x =\alpha u_k+f(t,x,u_k)$ 
by
$u_k^-$ 
and integrating on $(-1,1)$
we obtain
\begin{equation}
\label{4.2}
\int^1_{-1}\left[ u_{kt} u_k^- -(a(x)u_{kx})_x u_k^-\right]dx= \int^1_{-1}\left[\alpha u_k
u_k^-+f(t,x,u_k) u_k^-\right]dx.
\end{equation}
Recalling the definition of $u^+$ and $u^-$ 
(see \ref{parti} ),
we
have
$$
\int^1_{-1}u_{kt} u_k^- dx = \int^1_{-1} (u_k^+ - u_k^-)_t u_k^- dx =
-\int^1_{-1} (u_k^-)_t u_k^- dx = -\frac{1}{2} \frac{d}{dt} \int
(u_k^-)^2 dx\,.
$$
Integrating by parts and recalling that $u_k^-(t,\cdot)\in H^1_a (-1,1), \mbox{ for every } t\in (0,T),$ we
obtain
the following equality 
(see \ref{parti} )
$$
\int^1_{-1} (a(x)u_{kx})_x u_k^-\,dx =[a(x)u_{kx} u_k^-]^1_{-1} - \int^1_{-1}
a(x)u_{kx}(-u_k)_x\,dx = \int^1_{-1} a(x) u^2_{kx}\,dx\,.
$$
We also have
$$
\int^1_{-1}\alpha u_k u_k^- dx = -\int^1_{-1}\alpha(u_k^-)^2 dx.
$$
Moreover, using 
(\ref{Remf}),
we have
\begin{multline*}
\int^1_{-1} f(t,x,u_k) u_k^-\,dx=\int^1_{-1} f(t,x,u_k^+-u_k^-) u_k^-\,dx
=\int^1_{-1} f(t,x,-u_k^-) u_k^-\,dx\\=-\int^1_{-1} f(t,x,-u_k^-) \left(-u_k^-\right)\,dx
\geq -\int^1_{-1}  \nu
\left(-u_k^-\right)^2\,dx=-\int^1_{-1}  \nu
\left(u_k^-\right)^2\,dx
\end{multline*}
and therefore (\ref{4.2}) becomes
$$ -\frac{1}{2} \frac{d}{dt}
\int^1_{-1}(u_k^-)^2 dx + \int^1_{-1}\alpha (u_k^-)^2 dx+\int^1_{-1}  \nu
\,\left(u_k^-\right)^2\,dx
 \geq \int^1_{-1}
a(x) u^2_{kx}\,dx\:\geq 0,
$$
from which
$$
\frac{d}{dt}\int^1_{-1}(u_k^-)^2 dx\leq 2 \int^1_{-1}\left(\alpha(t,x)+\nu
\right) (u_k^-)^2
dx\leq 2\left(\|\alpha\|_\infty+\nu
\right)\int^1_{-1}(u_k^-)^2 dx.
$$
From the above inequality, applying Gronwall's inequality 
 we obtain
$$
\int^1_{-1}(u_k^-(t,x))^2 dx\leq
\nu_T^2e^{2\|\alpha\|_\infty t}\,\,\int^1_{-1}(u_k^-(0,x))^2dx,\;\;\,\;
\forall t\in(0,T).
$$
Since
$u_k(0,x)=u^0_k(x)\geq 0\,,$
we have
$u_k^-(0,x)=0.$
Therefore,
$$
u_k^-(t,x)=0, \qquad \,\,\,\quad\forall (t,x)\in Q_T.
$$
From this, 
 for every $k\in \N,$ it follows that
\begin{equation}
\label{succpos}
u_k(t,x)=u_k^+(t,x)\geq 0, \quad\forall (t,x)\in Q_T.
\end{equation}
Since $u_k\longrightarrow u \mbox{ in }{\cal{B}}(Q_T),$ as $k\rightarrow\infty,$
there exists 
$\{u_{k_h}\}_{h\in\N}\subseteq\{u_{k}\}_{k\in\N}$ such that, as $h\rightarrow\infty,$
\begin{equation}
\label{estratta}
u_{k_h}(t,x)\longrightarrow u(t,x),\quad \mbox{ a.e. } (t,x)\in Q_T.
\end{equation}
Applying \eqref{succpos} and \eqref{estratta}, we obtain
$$u(t,x)\geq 0, \quad\mbox{ a.e. } (t,x)\in Q_T.$$
\end{proof}
\subsection{Proofs of main results}
\begin{proof} (of Theorem \ref{T1}).
To prove Theorem \ref{T1}
it is sufficient to consider the set of target states 
$$u_d\in C^\infty ([-1,1]), 
  \quad\,
u_d>0  \mbox{  on  } [-1,1].$$
Indeed, 
every function $u_d\in L^2(-1,1), u_d\geq 0$ can be approximated by a sequence of strictly positive functions of class $C^\infty ([-1,1])$.\\
Then, let us consider any $u_0
\in L^2(-1,1)$ and any 
$u_d\in C^\infty ([-1,1])$ such that
$u_0\geq0,\,u_d>0$ and $u_0\neq0.$ 
\\
\noindent\textbf{STEP. 1}
We denote with
$\{-\mu_k\}_{k\in\N} \mbox{  and  } \{P_k\}_{k\in\N},$
respectively, the eigenvalues and orthonormal eigenfunctions of the
spectral problem $A_0\omega=\mu \omega,$ with $A_0$ defined as in (\ref{D(A0)}) 
 (\footnote{In the case $a(x)=1-x^2$, that is, where the principal part of the operator is that the Budyko-Sellers model, the orthonormal eigenfunctions are reduced to Legendre polynomials, and the eigenvalues are $\mu_k=(k-1)k, k
\geq 1$ (see also Remark \ref{Legendre} ).}) (see Lemma \ref{spectrum} ).
 Set
$$z(t,x):=\sum^{\infty}_{k=1}\, e^{-\mu_k
t}
\langle u_0, P_k \rangle
 P_k (x).$$
Since $z\in{\cal{B}}(Q_T),$ we can observe that
\begin{equation*}
z(t,x)=\sum^\infty_{k=1}\, (e^{-\mu_k
t}-1)
\langle u_0, P_k \rangle
 P_k(x)+u_0(x)\stackrel{L^2}{\longrightarrow} u_0(x),\, \mbox{ as } t\rightarrow
0.
\end{equation*}
Fix any $s\in(0,1)$,
thus
\begin{equation}\label{t*}
\exists\,\, 
t^*(s)>0\,\mbox{ such that } \|z(t,\cdot)-u_0\|
\leq\frac{s
}{2}, \;\;\forall t\leq t^*(s).
\end{equation}
Moreover, 
\begin{equation}\label{bar t(s)}
\exists\,\, 
\bar{t}(s)>0\,\,\mbox{ such that }\,\,t^{\rho} e^{K\,t}
 \leq\frac{s^2}{2C\|u_0\|
 ^{\vartheta}
},\quad\forall t\leq \bar{t}(s),
\end{equation}
where $\rho, C, K$
 are the positive constants of Lemma \ref{stima su w}.
\noindent Now, set
$$t_1(s)= \min \{t^*(s), \bar{t}(s), 1\},$$
we can observe that
$t_1(s)\longrightarrow 0, \mbox{ as } s\rightarrow0.$\\
\noindent We select the following negative constant 
bilinear control
\begin{equation*}
\alpha(t,x)=\alpha_1(s):=\frac{\ln s}{t_1(s)}<0,\;\;\forall t\in[0,t_1(s)], 
\forall x\in (-1,1),
\end{equation*}
that is, $\alpha_1(s)$ is such that
$e^{\alpha_1(s) t_1(s)}=s.$
On the interval $\big(0,t_1(s)\big),$ we apply the negative constant control $\alpha(t,x)=\alpha_1(s),$ $\forall x\in (-1,1).$ 
Now, 
we consider the linear problem $(\ref{PL})$ with $\alpha(t,x)\equiv\alpha_1(s), $\,\;$\forall t\in [0,t_1(s)],$ 
$\forall x\in(-1,1),$
and initial state $v_0 = u_0.$ For $t=t_1(s),$ the
weak
solution $v(t,x)$ of \eqref{PL} $(^{18})$ has the following representation in Fourier series
\begin{equation*}
v(t_1(s),x)= e^{\alpha_1(s) t_1(s)}\sum^\infty_{k=1}\, e^{-\mu_k
t_1(s)}\langle u_0, P_k \rangle
P_k (x)
=s \,z(t_1(s),x), \,\forall x\in (-1,1).
\end{equation*}


\noindent Therefore, by (\ref{t*}), we obtain 
\begin{equation}\label{su_0}
\|v(t_1(s),\cdot)-s u_0\|
=s\,\|z(t_1(s),\cdot)-u_0\|
\leq\,\frac{s^2}{2}. 
\end{equation}
Let $u$ be the strong solution to $(\ref{Psemilineare})$ with bilinear control $\alpha(t,x)\equiv\alpha_1(s),\:t>0,\, x\in(-1,1),$ and initial state $u_0.$ 
By 
Lemma \ref{stima su w}, the choice of $t_1(s)$ and (\ref{bar t(s)}) 
we have
\begin{equation}\label{w1}
\|u(t_1(s),\cdot)-v(t_1(s),\cdot)\|
\leq 
C\,(t_1(s))^\rho e^{K\,t_1(s)}\|u_0\|^{\vartheta}
\leq 
\frac{s^2}{2},
\end{equation}
where $\rho, C, K$ are the positive constants of Lemma \ref{stima su w}.
From (\ref{su_0}) and (\ref{w1}) we obtain
\begin{equation}\label{su_o}
\|u(t_1(s),\cdot)-s u_0\|
\leq\|u(t_1(s),\cdot)-v(t_1(s),\cdot)\|
+\|v(t_1(s),\cdot)-s u_0\|
\leq\,s^2.
\end{equation}
Let us define
$$\delta_s(x):=u(t_1(s),x)-s u_0(x),  \;\;\;\forall x\in(-1,1),$$
and we observe that, in view of (\ref{su_o}),
\begin{equation}\label{inf s}
\frac{\left\|\delta_s(\cdot)\right\|
}{s}
\longrightarrow 0, \qquad \mbox{ as } s\rightarrow 0.
\end{equation}
In this way, we have steered the nonlinear system (\ref{Psemilineare}) from the initial state $u_0$ to the target state $s u_0+\delta_s,$ at time $t_1(s).$

\noindent\textbf{STEP. 2} Let us fix $\eta\in(0,\vartheta-1).$ We will steer the system from the initial state 
 $ u(t_1(s),x)=s\,u_0(x)+\delta_s(x),\;x\in(-1,1),$
 to an arbitrarily small neighborhood of the target state
 $$s^{1+\eta}\,u_d,  \;\;  
 $$
 at some time $t_2(s).$ For this purpose, define
$$\alpha_2(x):=\alpha_*(x)+\beta,\;\;\,\forall x\in(-1,1),$$
with $\alpha_*(x)=-\frac{(a(x)u_{dx}(x))_x}{u_d(x)},\; x\in
(-1,1),$
and
\begin{equation}\label{Beta2}
\beta=\min\Big\{-\|\alpha_*\|_{L^\infty(-1,1)},-\frac{\eta\,K}{\vartheta-1-\eta}\Big\}-1,
\end{equation}
where $K$ is the positive constant of Lemma \ref{stima su w}.
We denote by
$\{-\lambda_k\}_{k\in\N} 
\mbox{  and  } 
\{\omega_k\}_{k\in\N},$
respectively, the eigenvalues and orthonormal eigenfunctions of the
spectral problem $A\omega=\lambda \omega,$ with $A=A_0+\alpha_*I$ and $D(A)=H^2_a(-1,1)$ ($A_0$ is the operator defined in \eqref{D(A0)}, see also Lemma \ref{spectrum}).
Applying Lemma \ref{Autof}, 
we
have
that
\begin{equation}
\label{4autof}
\lambda_1=0\quad \mbox{ and } \quad\omega_{1}(x)=\frac{u_d(x)}{\|u_d\|
}>0,\,\,\forall
x\in (-1,1)\,.
\end{equation}
Set $$u_k(s):=\langle u(t_1(s),\cdot),\omega_k
\rangle
, \;\;\forall k\in\N.$$
Thus,
$$u_k(s)=s\,z_k(s),\;\mbox{ where } z_k(s):=
\Big\langle u_0+\frac{\delta_s}{s},\omega_k\Big\rangle
,\; \forall k\in\N.$$
Then, by (\ref{inf s}) and (\ref{4autof}), we can observe that
\begin{equation}\label{z1a}
z_1(s)\longrightarrow \frac{1}{\|u_d\|}\langle u_0, u_d\rangle
\,>0\,, \mbox{ as } s\rightarrow 0.
\end{equation}
The 
weak 
solution of linear problem (\ref{PL}),
with $\alpha(t,x)=\alpha_*(x)+\beta,\; t>t_1(s),\,
x\in(-1,1), 
$ and initial state $v(t_1(s),\cdot)=s\,u_0(\cdot)+\delta_s(\cdot), $
has the following representation in Fourier series
(\footnote{We observe that adding $\beta\in\R$ to the coefficient $\alpha_*(x)$ there is a shift of the eigenvalues corresponding to $\alpha_*$ from $\{-\lambda_k\}_{k\in\N}$ to $\{-\lambda_k+\beta\}_{k\in\N},$ but the eigenfunctions remain the same for $\alpha_*$ and $\alpha_*+\beta$. })
$$v(t,x)=\sum^\infty_{k=1}
e^{(-\lambda_k+\beta)(t-t_1(s))}u_k(s)\omega_k(x)\\
=e^{\beta (t-t_1(s))}u_1(s)\omega_1(x)+\sum_{k>1}
e^{(-\lambda_k+\beta)(t-t_1(s))}u_k(s)\omega_k(x)\,.
$$
Let 
$$
r_s(t,x)=\sum_{k>1}
e^{(-\lambda_k+\beta)(t-t_1(s))}u_k(s)\omega_k(x)
$$
where 
$-\lambda_k<-\lambda_1=0,\,\mbox{ for every }\,k\in\N,\,k>1$\,(see Lemma \ref{spectrum}).
Owing to (\ref{4autof}),
$$
\|v(t,\cdot)-s^{1+\eta}\,u_d\|
\leq\bigg\|e^{\beta
(t-t_1(s))}u_1(s)\omega_1
-\|s^{1+\eta}\,u_d\|
\omega_1\bigg\|
\!+\|r_s(t,x)\|
\!\!
=\left|e^{\beta (t-t_1(s))}u_1(s)-s^{1+\eta}\,\|u_d\|
\right|+\|r_s(t,x)\|.
$$
Since $-\lambda_k<-\lambda_2,$ for every $k\in\N,\,k>2$ (see Lemma \ref{spectrum}),
 applying  Parseval's equality we have 
\begin{multline*}
\|r_s(t,x)\|
^2
\leq e^{2(-\lambda_2 +\beta)(t-t_1(s))}
\sum_{k>1}|
u_k(s)|^2 \|\omega_k
\|
^2
\\
=e^{2(-\lambda_2 +\beta)(t-t_1(s))}\sum_{k>1}|\langle su_0+\delta_s,\omega_k\rangle
|^2= e^{2(-\lambda_2
+\beta)(t-t_1(s))}\|su_0+\delta_s\|
^2
.
\end{multline*}
\noindent By 
(\ref{z1a})
we obtain
\begin{equation}
\label{6}
\exists\; s^*\in (0,1):\, u_1(s)=\langle su_0+\delta_s,\omega_{1}\rangle
>0,\,
 \forall s\in(0,s^*).
\end{equation}
\noindent Then, we choose $t_2(s),\;t_2(s)>t_1(s)$ such that 
\begin{equation}\label{beta}
e^{\beta \left(t_2(s)-t_1(s)\right)} u_1(s) =s^{1+\eta}\,\|u_d\|
\,,
\end{equation}
that is, since $\omega_1=\frac{
u_d}{\|
u_d\|},$
\begin{equation}
\label{9} t_2(s)=t_1(s)+\frac{1}{\beta}\ln\bigg(
\frac{s^{\eta}\,\|u_d\|
^2
}{\langle u_0+\frac{\delta_s}{s},u_d\rangle
}\bigg).
\end{equation}
So, by (\ref{beta}) and the above estimates for $\|v(t_2(s),\cdot)-s^{1+\eta}\,u_d(\cdot)\|
$ and $\|r_s(t_2(s),\cdot)\|
$ we conclude that
\begin{multline}\label{stima lin}
\|v(t_2(s),\cdot)-s^{1+\eta} u_d(\cdot)\|
\leq e^{(-\lambda_2
+\beta)(t_2(s)-t_1(s))}\|s u_0+\delta_s\|
\\=e^{-\lambda_2 (t_2(s)-t_1(s))}
\frac{s^{1+\eta}\|u_d\|
}{
u_1(s)}
\|s u_0+\delta_s\|
=e^{-\lambda_2 (t_2(s)-t_1(s))}
\frac{
\|u_d\|
}{
z_1(s)}
\bigg\|u_0+\frac{\delta_s}{s}\bigg\|\,s^{1+\eta}
\,.
\end{multline}
\noindent Thus, by (\ref{beta}) and by (\ref{6}), we deduce that there exists $\,
s_0\in(0,s^*) \mbox{ such that }$ 
\begin{equation*}\label{e mag}
   e^{-\lambda_2 (t_2(s)-t_1(s))}\frac{
  \|u_d\| \|u_0+\frac{\delta_s}{s}
   \|
   }{z_1(s)}=\left(\frac{s^{\eta}\|u_d\|
    }{ 
z_1(s)}\right)^{\frac{-\lambda_2}{\beta}}\frac{
\|u_d\|\|u_0+\frac{\delta_s}{s}
\|
}{z_1(s)}\leq c s^{\frac{-\eta\lambda_2}{\beta}},\;\; \forall s\in(0,s_0).
\end{equation*}
\noindent From the above, 
the inequality \eqref{stima lin} becomes
\begin{equation}\label{stima lin+ e mag}
\|v(t_2(s),\cdot)-s^{1+\eta} u_d(\cdot)\|
\leq 
c s^{\frac{-\eta\lambda_2}{\beta}}s^{1+\eta},\;\;\forall s\in(0,s_0),
\end{equation}
where $c$ is a positive constant.\\
Then, by \eqref{Beta2}, we observe that
$$\alpha_2(t,x)=\alpha_*(x)+\beta
<0,\;\forall t\in[t_1(s),t_2(s)],\;\forall x\in (-1,1).$$\\
Let $u$ be the strong solution to $(\ref{Psemilineare})$ 
with $\alpha(t,x)=\alpha_*(x)+\beta,\; t>t_1(s),\, x\in(-1,1),$ and initial state $u(t_1(s),\cdot)=s\,u_0(\cdot)+\delta_s(\cdot).$
Thus, by Lemma \ref{stima su w} we deduce the following estimate
\begin{equation}\label{uv}
\|u(t_2(s),\cdot)-v(t_2(s),\cdot)\|
\leq
C\,\left(t_2(s)-t_1(s)\right)^\rho e^{K\left(t_2(s)-t_1(s)\right)}{
}
\|su_0+\delta_s\|
^{\vartheta},
\end{equation}
where $\rho, C, K$ are the positive constants of Lemma \ref{stima su w}.
Then, by (\ref{9}), we deduce that 
\begin{equation}\label{e mag}
   e^{K (t_2(s)-t_1(s))}=\left(\frac{s^{\eta}\|u_d\|
    }{
z_1(s)}\right)^{\frac{K}{\beta}}
\leq c^\prime s^{\frac{\eta K}{\beta}},\;\; \forall s\in(0,s_0).
\end{equation}
Then, by $(\ref{stima lin+ e mag})-(\ref{e mag}),$ we have the following estimate
\begin{multline}\label{stima semilin}
   \|u(t_2(s),\cdot)-s^{1+\eta} u_d(\cdot)\|
   \leq \|u(t_2(s),\cdot)-v(t_2(s),\cdot)\|
   +\|v(t_2(s),\cdot)-s^{1+\eta} u_d(\cdot)\|\\
  \leq C
  \left(t_2(s)-t_1(s)\right)^{\rho
}e^{K(t_2(s)-t_1(s))}\|s u_0 +\delta_s\|
^{\vartheta}
 +c s^{\frac{-\eta\lambda_2}{\beta}}s^{1+\eta}
\\\leq C
  \left(t_2(s)-t_1(s)\right)^{\rho
} c^\prime s^{\frac{\eta K}{\beta}}s^\vartheta\left\| u_0 +\frac{\delta_s}{s}\right\|
^{\vartheta}
 +c s^{\frac{-\eta\lambda_2}{\beta}}s^{1+\eta}
 \\
\leq k\left( 
  \left(t_2(s)-t_1(s)\right)^{\rho
} s^{\frac{\eta K}{\beta}}s^{\vartheta-1-\eta}\left\| u_0 +\frac{\delta_s}{s}\right\|
^{\vartheta}
 + s^{\frac{-\eta\lambda_2}{\beta}}
 \right)s^{1+\eta}\\
\leq k\left(
  \left(t_2(s)-t_1(s)\right)^{\rho
} s^{\frac{\eta K}{\beta}+\vartheta-1-\eta}
+ s^{\frac{-\eta\lambda_2}{\beta}}
\right)s^{1+\eta},
\quad
\forall s\in(0,s_0),
\end{multline}
where $k$ is a positive constant.
Now, we have
\begin{equation*}\label{Tdiv}
t_2(s)-t_1(s)=%
\frac{1}{\beta}\ln\bigg(
\frac{s^{\eta}\,\|u_d\|
^2
}{\langle u_0+\frac{\delta_s}{s}, u_d\rangle
}\bigg)
\longrightarrow +\infty,\; \mbox{ as } s\rightarrow 0^+.
\end{equation*}
Since $\frac{\eta K}{\beta}+\vartheta-1-\eta>0,$ by the choice of $\beta$ (see (\ref{Beta2})), we have
 \begin{equation*}
 \left(t_2(s)-t_1(s)\right)^{\rho
 }s^{\frac{\eta K}{\beta}+\vartheta-1-\eta}=\left(\frac{1}{\beta}\ln\left(\frac{s^{\eta}\|u_d\|^2
 }{\langle  u_0+\frac{\delta_s}{s},u_d\rangle
 }\right)\right)^{\rho
} s^{\frac{\eta K}{\beta}+\vartheta-1-\eta} \longrightarrow 0,
\end{equation*}
$\mbox{ as } s\rightarrow 0^+.$
Defining
$$\delta_{s^{1+\eta}}(x):=u(t_2(s),\cdot)-s^{1+\eta} u_d(\cdot) \qquad x\in (-1,1),$$
estimate (\ref{stima semilin}) yields 
\begin{equation}\label{deltaeta}
\frac{\|\delta_{s^{1+\eta}}(\cdot)\|
}{s^{1+\eta}}\rightarrow 0, \mbox{ as } s\rightarrow 0^+.
\end{equation}
\textbf{STEP. 3} Let $\tau>0
.$ On the interval $(t_2(s),T(s)),$ with $T(s)=t_2(s)+\tau,$ we
 apply a positive constant control $\alpha_3(x)\equiv\alpha_3$ (its value will be chosen
 below).\\
We can represent the
weak
 solution of the linear problem (\ref{PL}), with $\alpha(t,x)\equiv\alpha_3$ and initial state $v(t_2(s),\cdot)=
u(t_2(s),\cdot)=
s^{1+\eta}u_d+\delta_{s^{1+\eta}},$ by Fourier series in the following way
\begin{equation*}
v(t_2(s)+\tau,x)= e^{\alpha_3 \tau}\sum^\infty_{k=1}\, e^{-\mu_k
\tau}
\langle u(t_2(s),\cdot),P_k\rangle
P_k (x).\\
\end{equation*}
Let us consider
$$z(\tau,x):=\sum^\infty_{k=1}\, e^{-\mu_k
\tau}
\langle u(t_2(s),\cdot),P_k\rangle
P_k (x),$$
then,
\begin{equation}\label{z(T,x)}
    z(\tau,x)=
    \sum^\infty_{k=1}\, \left(e^{-\mu_k
\tau}-1\right)
\langle u(t_2(s),\cdot),P_k\rangle
 P_k (x)+s^{1+\eta}u_d(x)+\delta_{s^{1+\eta}}(x)
\stackrel{L^2}{\longrightarrow} s^{1+\eta}u_d+\delta_{s^{1+\eta}},\, \mbox{ as } \tau\rightarrow 0^+\,.
\end{equation}
\vspace{0.5cm}
\noindent Now, for every $0<
\varepsilon<1,$ by \eqref{deltaeta}, we have
\begin{itemize}
  \item $\exists\, s_\varepsilon\in(0,s_0)$ such that 
   \begin{equation}\label{deltaeta2}
  \frac{\|\delta_{s_\varepsilon^{1+\eta}}\|
  }{s_\varepsilon^{1+\eta}}\leq
  \frac{\varepsilon}{4}.
  \end{equation}
  \noindent So, by \eqref{z(T,x)}, 
\begin{description}
  \item $\exists 
  \,\tau_\varepsilon=\tau(s_\varepsilon)>0$ such that\\
\begin{equation}\label{tausist}
        C\tau_\varepsilon^{\rho}e^{{\nu}\vartheta\tau_\varepsilon}s_\varepsilon^{-2\left(1+\eta\right)}
        \left(\|u_d\|
        +1\right)^\vartheta
        \leq
        \frac{\varepsilon}{2}\quad
 \mbox{ and }
 \quad
\|z(\tau_\varepsilon,\cdot)-(s_\varepsilon^{1+\eta}\,u_d+\delta_{s_\varepsilon^{1+\eta}})\|
        \leq 
         \frac{\varepsilon}{4}\,s_\varepsilon^{1+\eta},
\end{equation}
where $\rho, C$
 are the positive constants of Lemma \ref{stima su w}.
\end{description}
\end{itemize}
Set $T_\varepsilon=T(s_\varepsilon)=t_2(s_\varepsilon)+\tau_\varepsilon.$ Let us define
\begin{equation}\label{alfa3}
    \alpha(t,x)=\alpha_3(s_\varepsilon):=
    -\frac{1+\eta}{\tau_\varepsilon}\ln s_\varepsilon,
    \quad \forall t \in[t_2(s_\varepsilon),T_\varepsilon],\;\forall x\in (-1,1).
\end{equation}
Let $u$ be the strong solution to $(\ref{Psemilineare})$ with bilinear control $\alpha(t,x)\equiv\alpha_3
,$ \:$t>t_2(s_\varepsilon),\,x\in(-1,1),$ and initial state $u(t_2(s_\varepsilon),\cdot)=s_\varepsilon^{1+\eta}u_d+\delta_{s_\varepsilon^{1+\eta}}.$
\noindent By Lemma \ref{stima su w}, taking in mind that in our case the positive constant $K$ of Lemma \ref{stima su w} is $K=(2+\vartheta)
\alpha_3(s_\varepsilon)
+\vartheta\nu,$ and by \eqref{deltaeta2} and \eqref{alfa3}, since $\varepsilon<1,$
   we obtain 
\begin{multline*}\label{w stima}
\|u(t_2(s_\varepsilon)+\tau_\varepsilon,\cdot)-v(t_2(s_\varepsilon)+\tau_\varepsilon,\cdot)\|
\leq C\tau_\varepsilon^{\rho
}e^{K
\tau_\varepsilon}
s_\varepsilon^{\left(1+\eta\right)\vartheta}\Big\|u_d+\frac{\delta_{s_\varepsilon^{1+\eta}}}{s_\varepsilon^{1+\eta}}\Big\|
^\vartheta
\\
=C\tau_\varepsilon^{\rho}e^{\nu\vartheta\,\tau_\varepsilon}e^{(2+\vartheta)\alpha_3(
s_\varepsilon)\tau_\varepsilon}
s_\varepsilon^{\left(1+\eta\right)\vartheta}\Big\|u_d+\frac{\delta_{s_\varepsilon^{1+\eta}}}{s_\varepsilon^{1+\eta}}\Big\|
^\vartheta
\leq C\tau_\varepsilon^{\rho}e^{\nu\vartheta\,\tau_\varepsilon}s_\varepsilon^{\left(1+\eta\right)\vartheta}s_\varepsilon^{-\left(1+\eta\right)\left(2+\vartheta\right)}
\left(\|u_d\|
+1\right)^\vartheta
\\
\leq C\tau_\varepsilon^{\rho}e^{\nu\vartheta\,\tau_\varepsilon}s_\varepsilon^{-2\left(1+\eta\right)}
\left(\|u_d\|
+1\right)^\vartheta
\leq\frac{\varepsilon}{2}.
\end{multline*}
Moreover, by $\eqref{deltaeta2}-\eqref{alfa3},$ we deduce that
\begin{multline*}
\|v(T_\varepsilon,x)-u_d\|=\|e^{\alpha_3(
s_\varepsilon)\tau_\varepsilon} z(\tau_\varepsilon,\cdot)-u_d\|= s_\varepsilon^{-(1+\eta)}\|z(\tau_\varepsilon,\cdot)-\,s_\varepsilon^{1+\eta}u_d\|
\\
\leq s_\varepsilon^{-(1+\eta)}\Big(\|z(\tau_\varepsilon,\cdot)-\,(s_\varepsilon^{1+\eta}u_d+\delta_{s_\varepsilon^{1+\eta}})\|+\|\delta_{s_\varepsilon^{1+\eta}}\|\Big)
\leq s_\varepsilon^{-(1+\eta)}\left(\frac{\varepsilon}{4}\,s_\varepsilon^{1+\eta}+\|\delta_{s_\varepsilon^{1+\eta}}\|\right)
\leq\frac{\varepsilon}{2}\,. 
\end{multline*}
Therefore, by the last two inequalities we have
\begin{equation*}
   \|u(T_\varepsilon,x)-u_d\|
   \leq \|u(T_\varepsilon,x)-v(T_\varepsilon,x)\|
   +\|v(T_\varepsilon,x)-u_d\|
   \leq\varepsilon, 
\end{equation*}
%
from which the conclusion, keeping also in mind the Lemma \ref{NN}.
\end{proof}
\begin{proof}(of Theorem \ref{T1SL}).
The proof of Theorem \ref{T1}
can be adapted to Theorem \ref{T1SL}, keeping in mind that in STEP.2 of the previous proof, 
the inequality in (\ref{z1a})
continues to hold in this new setting. 
In fact we have
$$
\int^1_{-1} u_0(x)\omega_1 (x) dx=\int^1_{-1}
u_0(x)\frac{u_d(x)}{\|u_d\|}dx
=\frac{1}{\|u_d\|}\int^1_{-1} u_0u_d dx>0, \mbox{  by assumption (\ref{H2})}.
$$
From this point on, 
one can proceed as in the proof of Theorem \ref{T1}. 
\end{proof}
\appendix{
\section{}
\subsection{Proof of a singular Sturm-Liouville result}\label{PSSL}
\noindent In this section, we recall the proof of Lemma \ref{Autof} (see also \cite{CFproceedings1} and \cite{CF3}).
\begin{proof}{(of Lemma \ref{Autof}).}
We denote by
$\{-\lambda_k\}_{k\in\N}\mbox{  and  }\{\omega_k\}_{k\in\N},$
respectively, the eigenvalues and
orthonormal eigenfunctions of the
operator (\ref{operalfastella}) (see Lemma \ref{spectrum}).
Therefore,
\begin{equation*}\label{3difet}
\langle\omega_k,\omega_h\rangle
=\int^1_{-1}
\omega_k(x)\omega_h(x)dx=0, \qquad \mbox{ if }h\neq k\,.
\end{equation*}
We can see, by easy calculations, that an eigenfunction of the operator defined in (\ref{operalfastella})
is the function
$
\frac{v(x)}{\|v\|
},
$
associated with the eigenvalue $\lambda=0$.
Taking into account the above and considering that
$v(x)>0,\,\forall x \in (-1,1)$
\begin{equation*}
\label{4}
\,\exists\, k_* \in \N\,\,:\omega_{k_*}(x)=\frac{v(x)}{\|v\|
}>0\,  \mbox{ or }  \,\omega_{k_*}(x)=-\frac{v(x)}{\|v\|
}<0,\,\,\forall x\in (-1,1)\,.
\end{equation*}
Keeping in mind that
$\int^1_{-1}
\omega_{k_*}(x)\omega_h(x)dx=0,\mbox{ if }h\neq {k_*}$
and
 $\omega_{k_*}>0$ or $\omega_{k_*}<0$ in $(-1,1)$,
we observe that $\omega_{k_*}$
is the only eigenfunction of the operator defined in (\ref{operalfastella})
that doesn't change sign in $
(-1,1)$.\\
Let us now prove that
$k_*=1\,$,
that is, $\lambda_1=0$.\\
By a well-known variational characterization of the first eigenvalue, we have 
$$\lambda_1=\inf_{u\in H^1_a (-1,1)}\frac{\int^1_{-1} \left(a\,u_x^2\,-\alpha_*\,u^2\right)\, dx}{\int^1_{-1} u^2\, dx}\,\,.$$
By Lemma \ref{spectrum}, since $\lambda_{k_*}= 0,$ it is sufficient to prove that $\lambda_1\geq 0$, or
\begin{equation*}\label{VARIAT}
  \int^1_{-1}\alpha_*\,u^2\, dx\leq   \int^1_{-1} a\,u_x^2\,dx,\qquad \forall\,u\in H^1_a(-1,1).
\end{equation*}
Integrating by parts, we obtain the desired inequality
\begin{multline*}
\int^1_{-1}\alpha_*\,u^2\, dx=-\int^1_{-1}\frac{(a\,v_x)_x}{v}\,u^2\, dx = \int^1_{-1}a\,v_x\left(\frac{u^2}{v}\right)_x\,dx\\
=\int^1_{-1}a\,v_x\frac{2 u u_x}{v}\,dx-\int^1_{-1}a\,v^2_x\left(\frac{u^2}{v^2}\right)\,dx
=2\int^1_{-1}\sqrt{a}\,\frac{v_x}{v}u\sqrt{a}u_x\,\, dx-\int^1_{-1}a\,v^2_x\left(\frac{u^2}{v^2}\right)\,\, dx\\
\leq\int^1_{-1}\,a\,\left(\frac{ v_x u}{v}\right)^2\,dx+\int^1_{-1} a u^2_x\,dx-\int^1_{-1}a\,v^2_x\left(\frac{u^2}{v^2}\right)\,dx = \int^1_{-1} a u^2_x\,\, dx\,.
\end{multline*}
\end{proof}}
\subsection{Positive and negative part}\label{parti}
In this section, we recall a useful regularity property of positive and negative part of a given function.\\
Given $\Omega\subseteq\R^n$, $v:\Omega\longrightarrow\R$ we consider the positive-part function
$$
v^+(x) := \max\left\{v(x),0\right\},\qquad\qquad\qquad\forall x\in \Omega\,,
$$
and the negative-part function
$$
\!\,v^-(x) := \max\left\{0,-v(x)\right\},\qquad\qquad\qquad\forall x\in \Omega\,.
$$
Then we have the following equality
$$
v=v^+ -v^- \qquad\quad\quad \mbox{  in    }\,\Omega\,.
$$
For the functions $v^+$ and $v^-$ the following result of
regularity in Sobolev's spaces will be useful (see \cite{KS},
Appendix $A$ 
).\\
\begin{prop}
\label{A.1}
Let $\Omega\subset\R^n, \, u:\Omega\longrightarrow\R,\,  u\in H^{1,s}(\Omega) ,
\,1\leq s\leq\infty$ (\footnote{
By $H^{1,s}(\Omega)$ we denote the usual Sobolev spaces.
}). Then
$
u^+,\,u^-\in H^{1,s}(\Omega)$
and, for $1\leq i\leq n,$
\begin{equation*}
(u^+)_{x_i}=
\left\{\begin{array}{l}
{ u_{x_i} \qquad\qquad\qquad\, \mbox{ in }\{x\in\Omega : u(x)>0\}}\\ [2.5ex]
{ 0 \qquad\qquad\qquad\,\,\,\,\mbox{ in }\{x\in\Omega : u(x)\leq0\} }~,
\end{array}\right.
\end{equation*}
and
\begin{equation*}
(u^-)_{x_i}=\left\{\begin{array}{l}
{ -u_{x_i} \qquad\qquad\qquad\!\! \mbox{ in }\{x\in\Omega : u(x)<0\}}\\ [2.5ex]
{ 0 \,\,\qquad\qquad\qquad\,\,\, \mbox{ in }\{x\in\Omega : u(x)\geq0\} }~.
\end{array}\right.
\end{equation*}
\end{prop}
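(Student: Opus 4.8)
The plan is to reduce the statement to a single standard device: approximate $t\mapsto t^{+}$ by $C^{1}$ functions and apply the chain rule for Sobolev functions. First I would note that it suffices to treat $u^{+}$. Indeed, applying the $u^{+}$-result to $-u\in H^{1,s}(\Omega)$ gives $u^{-}=(-u)^{+}\in H^{1,s}(\Omega)$ with $(u^{-})_{x_i}=\chi_{\{-u>0\}}(-u)_{x_i}=-\chi_{\{u<0\}}u_{x_i}$, which is exactly the asserted two-case formula for $u^{-}$; combined with the $u^{+}$-formula this also recovers $u=u^{+}-u^{-}$.

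For the positive part I would introduce the regularised functions $F_{\eps}\colon\R\to\R$ defined by $F_{\eps}(t)=\sqrt{t^{2}+\eps^{2}}-\eps$ for $t\ge0$ and $F_{\eps}(t)=0$ for $t<0$. Each $F_{\eps}$ is $C^{1}$ across $t=0$, with $F_{\eps}(0)=0$, $F_{\eps}'(t)=t/\sqrt{t^{2}+\eps^{2}}\in[0,1]$ for $t\ge0$ and $F_{\eps}'(t)=0$ for $t\le0$; moreover $|F_{\eps}(t)|\le|t|$, and as $\eps\to0^{+}$ one has $F_{\eps}(t)\to t^{+}$ and $F_{\eps}'(t)\to\chi_{\{t>0\}}$ for every $t\in\R$. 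By the chain rule for the composition of a $C^{1}$ map with bounded derivative and a Sobolev function (which, if one prefers not to cite it, is itself obtained by mollifying $u$ and passing to the limit, using $|F_{\eps}(u)|\le\|F_{\eps}'\|_{\infty}|u|$ to stay in $L^{s}$), one gets $F_{\eps}(u)\in H^{1,s}(\Omega)$ with $(F_{\eps}(u))_{x_i}=F_{\eps}'(u)\,u_{x_i}$, hence for every $\varphi\in C^{\infty}_{c}(\Omega)$
\[
\int_{\Omega}F_{\eps}(u)\,\varphi_{x_i}\,dx=-\int_{\Omega}F_{\eps}'(u)\,u_{x_i}\,\varphi\,dx .
\]
Letting $\eps\to0^{+}$ and using dominated convergence on both sides — the left integrand being dominated by $|u\,\varphi_{x_i}|$ and the right one by $|u_{x_i}\,\varphi|$, both in $L^{1}(\Omega)$ since $u,u_{x_i}\in L^{s}_{\mathrm{loc}}(\Omega)$ with $s\ge1$ and $\varphi$ has compact support — I obtain
\[
\int_{\Omega}u^{+}\,\varphi_{x_i}\,dx=-\int_{\Omega}\chi_{\{u>0\}}\,u_{x_i}\,\varphi\,dx .
\]
Since also $u^{+}\le|u|\in L^{s}(\Omega)$, this shows $u^{+}\in H^{1,s}(\Omega)$ with $(u^{+})_{x_i}=\chi_{\{u>0\}}u_{x_i}$ a.e.\ (and in passing $\|u^{+}\|_{H^{1,s}}\le\|u\|_{H^{1,s}}$). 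The case $s=\infty$ is covered by the identical computation, the dominated convergence being applied against a fixed $\varphi\in C^{\infty}_{c}(\Omega)$.

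It then remains only to rewrite the derivative in the stated form: as $\chi_{\{u>0\}}$ equals $1$ on $\{u>0\}$ and $0$ on $\{u\le0\}$, the identity $(u^{+})_{x_i}=\chi_{\{u>0\}}u_{x_i}$ says precisely that $(u^{+})_{x_i}=u_{x_i}$ on $\{u>0\}$ and $(u^{+})_{x_i}=0$ on $\{u\le0\}$, and the companion formula for $u^{-}$ follows from the reduction above. I do not expect a genuine obstacle here: the only points needing care are having the chain rule $(F(u))_{x_i}=F'(u)u_{x_i}$ for $F\in C^{1}(\R)$ with bounded derivative at hand, and checking that the family $F_{\eps}$ is genuinely $C^{1}$ at the origin — both elementary. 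Note in particular that no fact about the vanishing of $u_{x_i}$ on the level set $\{u=0\}$ is needed, since $\chi_{\{u>0\}}$ already carries the value $0$ there.
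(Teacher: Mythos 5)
Your argument is correct. The paper itself gives no proof of this proposition --- it simply cites Kinderlehrer--Stampacchia, Appendix A --- and the smoothing device you use, namely $F_{\eps}(t)=\sqrt{t^{2}+\eps^{2}}-\eps$ for $t\ge 0$ and $F_{\eps}(t)=0$ for $t<0$, followed by the chain rule for $C^{1}$ functions with bounded derivative and dominated convergence in the weak formulation, is precisely the standard proof found in that reference (and in Gilbarg--Trudinger, Lemma 7.6). The reduction of the $u^{-}$ case to the $u^{+}$ case via $u^{-}=(-u)^{+}$, and your closing observation that the formula $(u^{+})_{x_i}=\chi_{\{u>0\}}u_{x_i}$ already encodes the two-case statement without invoking the vanishing of $u_{x_i}$ on the level set $\{u=0\}$, are both sound.
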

\begin{rem}\label{partirem}
The previous result holds true replacing the Sobolev spaces $H^{1,s}(\Omega)$ by the weighted Sobolev space $H^{1}_a(-1,1),$ in the case $n=1$ and $\Omega=(-1,1).$
 \end{rem}

\section{
 Existence and uniqueness of strict solutions 
}\label{AppA}
This appendix contains the proof of Theorem \ref{exB}, obtained in the Ph.D. Thesis \cite{CF3}, that is,
we prove that there exists a unique strict solution $u\in{\cal{H}(Q_T)}$ to $(\ref{Psemilineare})$, for all initial datum $u_0\in H^1_a(-1,1)$.
\\
We prove this theorem 
under the following assumptions (H.1)-(H.4):
 \begin{enumerate}
  \item[(H.1)] $u_0 \in H^1_a(-1,1);$ 
        \item[(H.2)] $\alpha \in L^\infty (Q_T);$ 
\item[(H.3)] $f:Q_T
\times\R\rightarrow \R$ is a Carath\'{e}odory function (i.e. $f$ is Lebesgue
measurable in $(t,x)$ for every $u\in \R,$ and continuous in $u$ for a.
e.$(t,x)\in Q_T
$);\\
$t\longmapsto f(t,x,u)$ is locally absolutely continuous for a.e. $x\in (-1,1), \forall u\in\R.$\\
Moreover, 
\begin{itemize}
\item  there exist $\vartheta\,\geq1$, $\gamma_0\geq0$ and $\gamma_1\geq0$ such that
\begin{equation}\label{hsuperl}
|f(t,x,u)|\leq\gamma_0\,|u|^\vartheta, \mbox{ for a.e. } (t,x)\in Q_T,\: \forall u\in \R\,,
\end{equation}
\begin{equation}\label{lip}
|f(t,x,u)-f(t,x,v)|
\leq\gamma_1\left(1+|u|^{\vartheta-1}+|v|^{\vartheta-1}\right)|u-v|, \mbox{ for a.e. } (t,x)\in Q_T,\;\forall u,v\in \R;
\end{equation}
\item there exists a constant $\nu\geq0$ 
     such that
\begin{equation}\label{hfsign}
f(t,x,u)\,u \leq \nu
\,u^2,\qquad \mbox{ for a.e. } (t,x)\in Q_T,\;\;\;  \forall u\in \R,\,
\end{equation}
\begin{equation}\label{hderfsign}
\;\;\;f_t(t,x,u)\,u \geq -\nu
\,u^2,\qquad \mbox{ for a.e. } (t,x)\in Q_T,\; \forall u\in \R,\,
\end{equation}
below we will put $\nu_T=e^{
\nu
T};
$
\end{itemize}

  \item[(H.4)] $a \in C^1([-1,1])$ is such that
    $$a(x)>0, \,\, \forall \, x \in (-1,1),\quad a(-1)=a(1)=0,$$
    and, the function $\xi_a(x)=\int_0^x \frac{ds}{a(s)}$ satisfies the following
    \begin{equation*}
    \xi_a\in L^{2\vartheta-1}(-1,1).
  \end{equation*}
\end{enumerate}

  \begin{rem}
We observe that assumptions $(H.3),\,(H.4)$ are more general than the assumptions $(A.3),\,(A.4)$ (see also Remark \ref{rem1} and Remark \ref{remmon}).
\end{rem}
The proof  of Theorem \ref{exB} follows from the next two lemmas. 
Firstly, the following Lemma \ref{AppL1} assures the local existence and uniqueness of the strict solution to $(\ref{Psemilineare}).$
\begin{lem}\label{AppL1}
For every $R>0,$ there is $T_R>0$ 
such that for all $\alpha\in L^\infty(-1,1)$ and all $u_0\in H^1_a(-1,1)$ with $\|u_0\|_{1,a}\leq R$ there is a unique strict solution $u\in{\cal{H}}(Q_{T_R})$ to $(\ref{Psemilineare}).$
\end{lem}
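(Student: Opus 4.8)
The plan is a contraction-mapping argument in $\mathcal{H}(Q_{T})$ for $T$ small, exactly as in the classical treatment of semilinear parabolic equations, the only genuinely new ingredient being the weighted Sobolev machinery of Section~3.2. I would recast \eqref{Psemilineare} as the abstract problem \eqref{NL}, $u'(t)=Au(t)+\phi(u)$, $u(0)=u_{0}$, with $A=A_{0}+\alpha I$ and $\phi(w)(t,x)=f(t,x,w(t,x))$, and define $\Psi(w)$ to be the unique solution of the \emph{linear} problem $u'=Au+\phi(w)$, $u(0)=u_{0}$ furnished by Proposition~\ref{MaxReg}. This is legitimate because, by Corollary~\ref{f in L2 cor} (whose hypothesis $\xi_{a}\in L^{2\vartheta-1}(-1,1)$ is part of (H.4)), $\phi(w)\in L^{2}(Q_{T})$ for every $w\in\mathcal{H}(Q_{T})$, with
\[
\|\phi(w)\|_{L^{2}(Q_{T})}\le c\,T^{1/2}\,\|w\|_{\mathcal{H}(Q_{T})}^{\vartheta},
\]
so Proposition~\ref{MaxReg} yields $\Psi(w)\in\mathcal{H}(Q_{T})$ with $\|\Psi(w)\|_{\mathcal{H}(Q_{T})}\le C_{0}(T)\big[\|u_{0}\|_{1,a}+\|\phi(w)\|_{L^{2}(Q_{T})}\big]$.

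First I would fix the ball. Restricting to $T\le1$ and putting $\mathcal{R}:=2C_{0}(1)R$, the estimates above give, for $w$ in the closed ball $B_{\mathcal{R}}=\{w\in\mathcal{H}(Q_{T}):\|w\|_{\mathcal{H}(Q_{T})}\le\mathcal{R}\}$,
\[
\|\Psi(w)\|_{\mathcal{H}(Q_{T})}\le C_{0}(1)\big(R+c\,T^{1/2}\mathcal{R}^{\vartheta}\big),
\]
so for $T$ small enough that $c\,T^{1/2}\mathcal{R}^{\vartheta}\le R$ one gets $\Psi(B_{\mathcal{R}})\subseteq B_{\mathcal{R}}$. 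For the contraction, $\Psi(w_{1})-\Psi(w_{2})$ solves the linear problem with zero initial datum and source $\phi(w_{1})-\phi(w_{2})$, so $\|\Psi(w_{1})-\Psi(w_{2})\|_{\mathcal{H}(Q_{T})}\le C_{0}(1)\|\phi(w_{1})-\phi(w_{2})\|_{L^{2}(Q_{T})}$, and the task is to bound the right-hand side by a positive power of $T$ times $\|w_{1}-w_{2}\|_{\mathcal{H}(Q_{T})}$. Using the pointwise Lipschitz bound \eqref{lip}, H\"older's inequality with exponents $\tfrac{\vartheta}{\vartheta-1}$ and $\vartheta$, and the embedding $\mathcal{H}(Q_{T})\hookrightarrow L^{2\vartheta}(Q_{T})$ with $\|v\|_{L^{2\vartheta}(Q_{T})}\le c\,T^{1/(2\vartheta)}\|v\|_{\mathcal{H}(Q_{T})}$ from Corollary~\ref{sob3}, one obtains
\[
\|\phi(w_{1})-\phi(w_{2})\|_{L^{2}(Q_{T})}\le C(\mathcal{R})\,T^{1/2}\,\|w_{1}-w_{2}\|_{\mathcal{H}(Q_{T})}\qquad (w_{1},w_{2}\in B_{\mathcal{R}})
\]
(the case $\vartheta=1$ is immediate, $f$ then being globally Lipschitz). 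Shrinking $T=T_{R}$ further so that $C_{0}(1)C(\mathcal{R})T_{R}^{1/2}<1$, the Banach fixed point theorem produces a unique $u\in B_{\mathcal{R}}\subset\mathcal{H}(Q_{T_{R}})$ with $\Psi(u)=u$; this $u$ solves \eqref{NL}, hence \eqref{Psemilineare} a.e., the weighted Neumann condition being read off from $u(t,\cdot)\in H^{2}_{a}(-1,1)$ via Proposition~\ref{caratH2}.

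Uniqueness in all of $\mathcal{H}(Q_{T_{R}})$, not merely in $B_{\mathcal{R}}$, I would obtain by a bootstrap: any strict solution with $\|u_{0}\|_{1,a}\le R$ satisfies $\|u\|_{\mathcal{H}(Q_{t})}\le C_{0}(1)\big(R+c\,t^{1/2}\|u\|_{\mathcal{H}(Q_{t})}^{\vartheta}\big)$ for $t\le T_{R}$, and since $t\mapsto\|u\|_{\mathcal{H}(Q_{t})}$ is continuous with value $\le R<\mathcal{R}$ at $t=0$, a connectedness argument forces $\|u\|_{\mathcal{H}(Q_{T_{R}})}<\mathcal{R}$, whence $u$ coincides with the fixed point. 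Note that $T_{R}$ depends only on $R$ and on $\|\alpha\|_{\infty}$ (through $C_{0}$), not on $u_{0}$ beyond $\|u_{0}\|_{1,a}\le R$; this uniformity is what allows one to iterate the lemma and, combined with an a priori bound, reach the global strict solution of Theorem~\ref{exB}.

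The main obstacle is the nonlinear estimate in the contraction step. Because the operator is strongly degenerate, functions in $H^{1}_{a}(-1,1)$ need not be bounded, so the superlinear term cannot be absorbed via an $L^{\infty}$ bound as in the uniformly parabolic case; it is precisely the weighted Sobolev embeddings of Section~3.2 (Lemma~\ref{lemma sob3} and Corollary~\ref{sob3}) that guarantee $\phi\big(\mathcal{H}(Q_{T})\big)\subseteq L^{2}(Q_{T})$ and supply the decisive factor $T^{1/2}$ turning $\Psi$ into a contraction for short time. Everything else—identifying the fixed point as a strict solution, and tracking the dependence of all constants on $\mathcal{R}$—is routine once these embeddings are in hand.
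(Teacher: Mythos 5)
Your proposal is correct and follows essentially the same route as the paper's proof: a Banach fixed-point argument in the ball $\{u:\|u\|_{\mathcal{H}(Q_T)}\le 2C_0(1)R\}$, with invariance and contraction obtained from Proposition~\ref{MaxReg} together with Corollary~\ref{f in L2 cor}, the Lipschitz bound \eqref{lip}, H\"older's inequality and Corollary~\ref{sob3}, yielding a small positive power of $T$. The only addition is your explicit continuation argument for uniqueness outside the ball, which the paper leaves implicit (it also follows from Proposition~\ref{uni} applied to strict solutions, cf.\ Remark~\ref{dcstrict}).
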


\begin{proof}
Let us fix $R>0,$ $u_0\in H^1_a(-1,1)$ such that $\|u_0\|_{1,a}\leq R.$
Let $0<T\leq 1$ 
(further constraints on $T$ will be imposed below).
We define
 $${\cal{H}}_R(Q_{T}):=\{u\in {\cal{H}}(Q_{T})\,:\,\|u\|_{{\cal{H}}(Q_{T})}\leq 2C_0(1) R\},$$
 where $C_0(1)$ is the constant $C_0(T)$ (nondecreasing in $T$) defined in Proposition \ref{MaxReg} and valued in $1$.   Then, let us define the following map
$$\Lambda :{\cal{H}}_R(Q_{T})\longrightarrow{\cal{H}}_R(Q_{T}),$$
such that
\begin{equation*}\label{contr}
   \Lambda(u)(t):=e^{tA}u_0+\int_0^t e^{(t-s)A} 
   \phi(s,u(s))\,ds\,,\;\forall t\in [0,T].
\end{equation*}
STEP. 1 \;\;We prove that 
the map $\Lambda$ is well defined for some $T$.\\
Fix $u\in {\cal{H}}_R(Q_{T}).$
Let us consider
$U(t,x):=\Lambda\left(u\right)(t,x),$ 
then $U$ is solution of the following linear problem
\begin{equation}\label{LA4U}
\begin{cases}
U_t-(a\,U_x)_x=\alpha U +f(t,x,u) \;\;\; \mbox{ in }\; Q_{T}
\\
a(x) U_x(t,x)|_{x=\pm 1}=0
\\
U(0,x)=u_0\qquad\qquad\qquad  .
\end{cases}
\end{equation}
By Lemma \ref{f in L2}, $f(\cdot,\cdot,u)\in L^2(Q_{T})=L^2(0,T;L^2(-1,1)),$ then applying Proposition \ref{MaxReg} we deduce that a unique solution $U\in {\cal{H}}(Q_{T})$ of (\ref{LA4U}) exists and we have
$$\|U\|_{{\cal{H}}(Q_{T})}\leq C_0(T) \left(\|f(\cdot,\cdot,u)\|_{L^2(Q_{T})}+\|u_0\|_{1,a}\right).$$
Thus, keeping in mind that $C_0(T)\leq C_0(1),$ by our choice of $T,$ and applying Corollary \ref{f in L2 cor} we obtain
\begin{multline*}
\|U\|_{{\cal{H}}(Q_{T})}\leq
 C_0(1) \left(\|f(\cdot,\cdot,u)\|_{L^2(Q_{T})}+\|u_0\|_{1,a}\right)\\
 \leq C_0(1)\left(\gamma_0\|u\|^\vartheta_{L^{2\vartheta}(Q_{T})}+\|u_0\|_{1,a}\right)
 \leq C_0(1)\left(c\,T^\frac{1}{2}\|u\|^\vartheta_{{\cal{H}}(Q_{T})}+\|u_0\|_{1,a}\right)\\
 \leq
 C_0(1)\left(c\,T^\frac{1}{2}(2C_0(1)R)^\vartheta+R\right)
 \leq C_0(1)\left(
 c\,C_0^\vartheta(1) R^\vartheta T^\frac{1}{2}+R\right).
\end{multline*}
Now, we fix
$T_0(R)=\min\left\{\frac{1}{C_0^{2\vartheta}(1) c^2R^{2(\vartheta-1)}},1\right\}.$ Then we have
 \begin{equation*}
    \|\Lambda(u)\|_{{\cal{H}}(Q_{T})}
     \leq C_0(1)\left(c\,C_0^\vartheta(1)\,R^\vartheta T^\frac{1}{2}+R\right)
\leq 2C_0(1)R, \; \forall T\in[0,T_0(R)].
\end{equation*}
Thus,
$\Lambda u\in{\cal{H}}_R(Q_{T}),\; \forall T\in[0,T_0(R)].$ 
\\
STEP. 2  \;\; We prove that exists
$T_R\leq T_0(R)
$ such that the map $\Lambda$ is a contraction.\\
Let $T, \,
0<T\leq T_0(R)$ (T will be fix below).  Fix $u,v\in {\cal{H}}_R(Q_T)$ and
set
$W:=\Lambda(u)-\Lambda(v),$
\, $W$ is solution of the following problem
\begin{equation}\label{LA4W}
\begin{cases}
W_t-(a\,W_x)_x=\alpha W +f(t,x,u)-f(t,x,v)  \;\;\; \mbox{ in }\; Q_T
\\
a(x) W_x(t,x)|_{x=\pm 1}=0
\\
W(0,x)=0\qquad\qquad\qquad  .
\end{cases}
\end{equation}
By Lemma \ref{f in L2} $f(\cdot,\cdot,u)\in L^2(Q_T)
$ and applying Proposition \ref{MaxReg} we deduce that
a unique solution $W\in {\cal{H}}(Q_T)$ of (\ref{LA4W}) exists and we have
\begin{equation}\label{Wcontr}
\|W\|_{{\cal{H}}(Q_T)}\leq
 C_0(T) 
 \|f(\cdot,\cdot,u)-f(\cdot,\cdot,v)\|_{L^2(Q_T)}.
\end{equation}
Moreover, applying the inequality (\ref{lip}) (see assumptions $(H.3)$) and H\"older inequality we obtain
\begin{multline}\label{W H0}
\int_{Q_T}|f(t,x,u)-f(t,x,v)|^2\,dx\,dt \leq\gamma_1^2\int_{Q_T}\left(1+|u|^{\vartheta-1}+|v|^{\vartheta-1}\right)^2|u-v|^2\,dx\,dt\\
\leq c\,\left(\int_{Q_T}\left(1+|u|^{2(\vartheta-1)}+|v|^{2(\vartheta-1)}\right)^{\frac{\vartheta}{\vartheta-1}}dx dt\right)^{\!\!\frac{\vartheta-1}{\vartheta}}\!\!\!\left(\int_{Q_T}|u-v|^{2\vartheta}\,dx dt\right)^{\!\!\frac{1}{\vartheta}}\\
\leq c  \left(T^{1-\frac{1}{\vartheta}}+\|u\|^{2(\vartheta-1)}_{L^{2\vartheta}(Q_T)}+\|v\|_{L^{2\vartheta}(Q_T)}
^{2(\vartheta-1)}\right)
\!\!\|u-v\|^2_{L^{2\vartheta}(Q_T)}.
\end{multline}
Then, by (\ref{Wcontr}) and (\ref{W H0}), applying Corollary \ref{sob3} we have
\begin{multline*}\label{W H}
   \|\Lambda(u)-\Lambda(v)\|^2_{{\cal{H}}(Q_T)}
    \leq c\,
    \left(T^{1-\frac{1}{\vartheta}}+\|u\|^{2(\vartheta-1)}_{{\cal{H}}(Q_T)}+\|v\|^{2(\vartheta-1)}_{{\cal{H}}(Q_T)}\right)
T^\frac{1}{\vartheta}\|u-v\|^2_{{\cal{H}}(Q_T)}\\
\leq\,c\left(1+\|u\|^{2(\vartheta-1)}_{{\cal{H}}(Q_T)}+\|v\|^{2(\vartheta-1)}_{{\cal{H}}(Q_T)}\right)
T^\frac{1}{\vartheta}\|u-v\|^2_{{\cal{H}}(Q_T)}
    \leq c\left[1+2(2C_0(1)R)^{2(\vartheta-1)}\right]\,T^{\frac{1}{\vartheta}}
    \|u-v\|^2_{{\cal{H}}(Q_T)}.
\end{multline*}
Let $T_1(R)=\left(\frac{1}{2c\left[1+2(2C_0(1)R)^{2(\vartheta-1)}\right]}\right)^\vartheta,$
and we define $T_R=\min\{T_0(R),T_1(R)\}.$
Then, 
$\Lambda$ is a contraction map.
Therefore, $\Lambda$ has a unique fix point in ${\cal{H}}_R(Q_{T_R}),$ from which the conclusion follows.\\
\end{proof}

Now, thanks to a classical result (see, e.g., \cite{Lunardi} and \cite{Pazy}), the following Lemma \ref{esist glob} assures the global existence of the strict solution to $(\ref{Psemilineare}),$ so we obtain the complete proof of Theorem \ref{exB}.
\begin{lem}\label{esist glob}
Let $T>0,\; u_0\in H^1_a(-1,1)$ and let $\alpha\in L^\infty(-1,1).$ 
The strict solution $u\in {\cal{H}}(Q_T)$ of system \eqref{Psemilineare}
satisfies the following estimate
$$\|u\|_{{\cal{H}}(Q_{T})}\leq C(\|u_0\|_{1,a})e^{k T}\|u_0\|_{1,a},$$
where
$C(\|u_0\|_{1,a})=h\,
\left(1+\|u_0\|^{\vartheta-1}_{1,a}\right)^{1+\frac{\vartheta}{2}}
,$\; $h$ and $k$ are positive constants.
\end{lem}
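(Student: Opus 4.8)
The statement is an a priori bound in ${\cal H}(Q_T)$ for any strict solution $u$ of \eqref{Psemilineare} issuing from $u_0\in H^1_a(-1,1)$; together with the local existence of Lemma~\ref{AppL1} it feeds the classical continuation principle (\cite{Lunardi},\cite{Pazy}) that yields global existence. The plan is to combine the zero--order energy bound of Corollary~\ref{L2}, the maximal regularity estimate of Proposition~\ref{MaxReg}, and the embedding $\phi({\cal H}(Q_T))\subseteq L^2(Q_T)$ of Corollary~\ref{f in L2 cor}, and then to close the resulting superlinear inequality by a continuation over finitely many short subintervals. First I would record the zero--order bound: by Corollary~\ref{L2} (valid for strict solutions by Remark~\ref{dcstrict}), testing the equation with $u$, using the sign condition $f(t,x,u)u\le\nu u^2$ of $(H.3)$ and Gronwall's lemma gives
\[
\|u\|_{{\cal B}(Q_T)}\le\nu_T\,e^{\|\alpha^+\|_\infty T}\|u_0\|\le\nu_T\,e^{\|\alpha^+\|_\infty T}\|u_0\|_{1,a}=:M ,
\]
so in particular $\sup_{[0,T]}\|u(t,\cdot)\|\le M$ and $\int_0^T\|\sqrt a\,u_x\|^2\,dt\le M^2$; this last integrability is precisely what will make the continuation below terminate.

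Next I would obtain a regularity estimate on short time windows. Fix $[\sigma,\sigma+\delta]\subseteq[0,T]$ with $\delta\le 1$ and write $Q^{\sigma}_{\delta}:=(\sigma,\sigma+\delta)\times(-1,1)$. The restriction of $u$ is a strict solution on $Q^{\sigma}_{\delta}$, so, since $u\in{\cal H}(Q^{\sigma}_{\delta})$, Corollary~\ref{f in L2 cor} (this is where the standing assumption $\xi_a\in L^{2\vartheta-1}(-1,1)$, i.e.\ $(H.4)$, enters, via Lemma~\ref{lemma sob3}) gives $g:=f(\cdot,\cdot,u)\in L^2(Q^{\sigma}_{\delta})$ with $\|g\|_{L^2(Q^{\sigma}_{\delta})}^2\le c\,\delta\,\|u\|_{{\cal H}(Q^{\sigma}_{\delta})}^{2\vartheta}$. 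Viewing $u$ as the unique solution of the linear problem $u'=Au+g$ with datum $u(\sigma,\cdot)\in H^1_a(-1,1)$ and invoking Proposition~\ref{MaxReg} together with the monotonicity $C_0(\delta)\le C_0(1)$, we get
\[
\|u\|_{{\cal H}(Q^{\sigma}_{\delta})}\le C_0(1)\,\|u(\sigma,\cdot)\|_{1,a}+C_0(1)\sqrt{c\,\delta}\;\|u\|_{{\cal H}(Q^{\sigma}_{\delta})}^{\vartheta}.
\]
If $\|u(\sigma,\cdot)\|_{1,a}\le R$ and $\delta$ is chosen so small that $C_0(1)\sqrt{c\,\delta}\,(2C_0(1)R)^{\vartheta-1}\le\tfrac12$, then, since $\delta\mapsto\|u\|_{{\cal H}(Q^{\sigma}_{\delta})}$ is continuous with limit $\|u(\sigma,\cdot)\|_{1,a}$ as $\delta\to0^+$, the elementary implication ``$x\le a+bx^\vartheta$ with $b\,a^{\vartheta-1}\le\tfrac12$ forces $x\le 2a$'' yields $\|u\|_{{\cal H}(Q^{\sigma}_{\delta})}\le 2C_0(1)R$.

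To close the estimate globally I would exploit the $L^2$--control from the first paragraph. Since $\|u(t,\cdot)\|_{1,a}^2\le M^2+\|\sqrt a\,u_x(t,\cdot)\|^2$ and $\int_0^T\|\sqrt a\,u_x\|^2\,dt\le M^2$, a Chebyshev argument shows that the times at which $\|u(t,\cdot)\|_{1,a}$ exceeds a suitable constant $R\asymp M$ form a set of small measure; hence one can pick a mesh $0=s_0<s_1<\dots<s_N=T$ with $\|u(s_i,\cdot)\|_{1,a}\le R$ for every $i$ and $s_{i+1}-s_i\le\delta_*$, where $\delta_*$ is the threshold of the previous paragraph (so $\delta_*^{-1}$ is an explicit power of $1+R$). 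Applying the short--window bound on each $[s_i,s_{i+1}]$ gives $\|u\|_{{\cal H}(Q^{s_i}_{s_{i+1}-s_i})}\le 2C_0(1)R$, and summing the squared norms over the $N\lesssim T/\delta_*$ intervals yields $\|u\|_{{\cal H}(Q_T)}^2\lesssim (T/\delta_*)\,R^2$. Since $R\asymp M=\nu_T e^{\|\alpha^+\|_\infty T}\|u_0\|_{1,a}$ and $\delta_*^{-1}$ is a fixed power of $1+\|u_0\|_{1,a}$ times an exponential in $T$, rearranging puts this in the announced form $\|u\|_{{\cal H}(Q_T)}\le h\,(1+\|u_0\|_{1,a}^{\vartheta-1})^{1+\frac{\vartheta}{2}}\,e^{kT}\|u_0\|_{1,a}$; the exact exponent $1+\tfrac{\vartheta}{2}$ is obtained by bookkeeping how the powers of $\|u_0\|_{1,a}$ carried by $R$ and by $\delta_*^{-1}$ accumulate over the $N$ restarts.

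The main obstacle is the superlinearity of $f$ (degree $\vartheta\in[1,3)$): the nonlinear term can only be dominated by $\|u\|_{{\cal H}}$ to a power $>1$, which rules out a single Gronwall step and forces the iteration. The two features that make the iteration terminate after finitely many steps with a clean constant are the a priori $L^2$--bound coming from the sign condition (which supplies the \emph{uniform} mesh bound $R$, so the step size $\delta_*$ need not shrink from one restart to the next) and the monotonicity $C_0(\delta)\le C_0(1)$ of the maximal regularity constant (which keeps $\delta_*$ bounded below). I expect the only genuinely delicate point to be the verification that the ``good--time'' mesh of the third paragraph can be taken with uniformly controlled spacing; everything else is a routine combination of the energy estimate, Proposition~\ref{MaxReg} and Corollary~\ref{f in L2 cor}.
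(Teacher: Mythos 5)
Your overall strategy (zero--order bound from Corollary \ref{L2}, then maximal regularity on short windows, then a continuation over a mesh of ``good times'') is not the paper's argument, and it contains a gap that I do not think can be repaired within your framework. The problem is exactly the step you flag as delicate: the existence of a mesh $0=s_0<\dots<s_N=T$ with $\|u(s_i,\cdot)\|_{1,a}\le R$ \emph{and} $s_{i+1}-s_i\le\delta_*$. Chebyshev applied to $\int_0^T\|\sqrt a\,u_x\|^2\,dt\le M^2$ only gives that the bad set $B=\{t:\|u(t,\cdot)\|_{1,a}>R\}$ has measure of order at most $M^2/R^2$; to guarantee that every interval of length $\delta_*$ meets the good set you need $M^2/R^2<\delta_*$, while your short--window bootstrap forces $\delta_*\sim R^{-2(\vartheta-1)}$. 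The two requirements are compatible only if $R^{4-2\vartheta}\gtrsim M^2$, which is impossible for $\vartheta\ge 2$ (a range allowed by $(A.3)$/$(H.3)$, where $\vartheta$ may range up to $3$) once $M$ is large, and which for $1<\vartheta<2$ forces $R\sim M^{1/(2-\vartheta)}$ and hence a power of $\|u_0\|_{1,a}$ far worse than the stated $\left(1+\|u_0\|^{\vartheta-1}_{1,a}\right)^{1+\frac{\vartheta}{2}}$. So the continuation does not close, and the final constant cannot be recovered this way.

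The paper's proof avoids the iteration altogether: it tests the equation with $u_t$ (not with $u$, and not via maximal regularity). Writing $F(t,x,u)=\int_0^u f(t,x,\zeta)\,d\zeta$, one has
\begin{equation*}
\int^1_{-1} f(t,x,u)\,u_t\,dx=\frac{d}{dt}\int^1_{-1}F(t,x,u)\,dx-\int^1_{-1}\Bigl(\int_0^u f_t(t,x,\zeta)\,d\zeta\Bigr)dx,
\end{equation*}
and the sign hypotheses $f(t,x,u)u\le\nu u^2$ and $f_t(t,x,u)u\ge-\nu u^2$ give $F(t,x,u)\le\frac{\nu}{2}u^2$ and $\int_0^u f_t\,d\zeta\ge-\frac{\nu}{2}u^2$, so the a priori superlinear term is absorbed by quantities already controlled by the $L^2$ bound of Corollary \ref{L2}. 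This yields in one stroke a bound on $\sup_t\|\sqrt a\,u_x(t,\cdot)\|^2+\int_0^T\|u_t\|^2\,dt$ by an explicit function of $\|u_0\|_{1,a}$; the remaining piece $\int_0^T\|(au_x)_x\|^2\,dt$ is then read off from the equation itself together with Lemma \ref{f in L2}. Note that your proposal never uses the hypothesis $f_t(t,x,u)u\ge-\nu u^2$, which the paper introduces precisely for this well--posedness estimate; that is a strong signal that the intended test function is $u_t$ rather than a restart argument.
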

\begin{proof}
Multiplying by $u_t$ both members of the equation in $(\ref{Psemilineare})$  and integrating 
on $(-1,1)$ 
we obtain
$$\int^1_{-1}
u_t^2(t,x)dx-\int^1_{-1}\left(a(x)u_x(t,x)\right)_x u_{t}(t,x)dx
=\int^1_{-1}\alpha(x) u(t,x)\,u_t(t,x)
dx+\int^1_{-1}f(t,x,u)\,u_t(t,x)\,dx,
$$
thus,
$$\int^1_{-1} u_t^2(t,x)dx+\frac{1}{2}\frac{d}{dt}\int^1_{-1}a(x)u^2_x(t,x)\,dx
=\frac{1}{2}\frac{d}{dt}\int^1_{-1}\alpha(x) u^2(t,x)
dx+\int^1_{-1}f(t,x,u)\,u_t(t,x)\,dx.
$$
Now, let us consider the following function $F:Q_T\times\R\longrightarrow\R,$
\begin{equation*}\label{F}
    F(t,x,u):=\int_0^{u}f(t,x,\zeta)\,d\zeta\,, \;\forall (t,x,u)\in Q_T\times\R.
\end{equation*}
Then, we observe that 
 \begin{equation}\label{dF}
\frac{\partial F(t,x,u(t,x))}{\partial t}=f(t,x,u(t,x))u_t(t,x)+\int_0^uf_t(t,x,\zeta)\,d\zeta, \;\forall (t,x)\in Q_T.
\end{equation}
Moreover, by (\ref{hsuperl}) (see assumptions $(H.3)$), we have
$$F(0,x,u_0(x))=\int_0^{u_0}\,f(0,x,\zeta)d\zeta\leq \gamma_0\int_0^{u_0} |\zeta|^\vartheta\,d\zeta=\frac{\gamma_0}{\vartheta+1}|u_0|^{\vartheta+1}, \;\forall x\in (-1,1).$$
Then, by Lemma \ref{sob1}, we deduce that
\begin{equation}\label{F1}
    \int_{-1}^1\,|F(0,x,u_0(x))|\,dx\leq \frac{\gamma_0}{\vartheta+1} \|u_0\|_{L^{\vartheta+1}(-1,1)}^{\vartheta+1}\leq c \|u_0\|_{1,a}^{\vartheta+1}.
\end{equation}
Now, we observe the following property of the function $F$:\\
keeping in mind that, by (\ref{hfsign}), for almost every $(t,x)\in Q_T,$ we obtain
\begin{itemize}
  \item $f(t,x,\zeta)\leq\nu
  \zeta,$ for every $\zeta\in\R, \zeta\geq0$
  \item $f(t,x,\zeta)\geq\nu
   \zeta,$ for every $\zeta\in\R, \zeta<0,$
\end{itemize}
then, for almost every $(t,x)\in Q_T,$ we  have
\begin{itemize}
  \item  for every $u\in\R, u\geq0, \,F(t,x,u)=\int_0^{u}\,f(t,x,\zeta)d\zeta\leq\nu
  \int_0^{u}\,\zeta d\zeta= \frac{\nu
  }{2}u^2$
  \item  for every $u\in\R, u<0, \,F(t,x,u)=-\int^0_{u}\,f(t,x,\zeta)d\zeta\leq-\nu
  \int_u^{0}\,\zeta d\zeta=\frac{\nu
  }{2}u^2.$
\end{itemize}
Then,
\begin{equation}\label{F3}
   F(t,x,u)
   \leq \frac{\nu
   }{2} u^2\,, \quad\forall (t,x,u)\in Q_T\times\R.
\end{equation}
Now, by \eqref{hderfsign}, proceeding similarly to \eqref{F3}, we obtain
\begin{equation}\label{F3der}
   \int_0^uf_t(t,x,\zeta)\,d\zeta\geq -\frac{\nu
   }{2} u^2\,, \quad\forall (t,x,u)\in Q_T\times\R.
\end{equation}
In effect, 
by (\ref{hderfsign}), for almost every $(t,x)\in Q_T,$ we deduce that
\begin{itemize}
  \item $f_t(t,x,\zeta)\geq -\nu
   \zeta,$ for every $\zeta\in\R, \zeta\geq0$
  \item $f_t(t,x,\zeta)\leq-\nu
   \zeta,$ for every $\zeta\in\R, \zeta<0,$
\end{itemize}
then, for almost every $(t,x)\in Q_T,$ we obtain
\begin{itemize}
  \item  for every $u\in\R, u\geq0, \,\int_0^{u}\,f_t(t,x,\zeta)d\zeta\geq-\nu
  \int_0^{u}\,\zeta d\zeta= -\frac{\nu
  }{2}u^2$
  \item  for every $u\in\R, u<0, \,\int^u_{0}\,f_t(t,x,\zeta)d\zeta=-\int^0_{u}\,f_t(t,x,\zeta)d\zeta\geq\nu
  \int_u^{0}\,\zeta d\zeta= -\frac{\nu
  }{2}u^2.$
\end{itemize}
By 
 (\ref{dF}), we deduce
$$\int^1_{-1}
u_t^2(t,x)dx+\frac{1}{2}\frac{d}{dt}\int^1_{-1}\left\{a(x)u^2_x(t,x)\,-\alpha(x) u^2(t,x)-2F(t,x,u)\right\}\,dx\\
+\int_{-1}^1\int_0^uf_t(t,x,\zeta)\,d\zeta\,dx=0.
$$
Fix $t\in(0,T)$ and integrate on $(0,t),$ we have
\begin{multline*}\label{Fint1}
\int^t_{0}\int^1_{-1}
u_t^2(s,x)dx\,ds+\frac{1}{2}\int^1_{-1}\left\{a(x)u^2_x(t,x)\,-\alpha(x)u^2(t,x)\right\}\,dx\\
=\int^1_{-1}F(t,x,u(t,x))\,dx+\frac{1}{2}\int^1_{-1}\left\{a(x)u^2_{0x}(x)\,
-\alpha(x)u_0^2(x)\right\}\,dx\\
-\int^1_{-1}F(0,x,u_0(x))\,dx\,
- \int_{0}^t\int_{-1}^1\int_0^uf_t(t,x,\zeta)\,d\zeta\,dx\,dt.
\end{multline*}
Thus, by $\eqref{F1}-\eqref{F3der},$ we obtain
\begin{multline*}
\int^t_{0}\|u_t(s,\cdot)\|^2\,ds+\|\sqrt{a}u_x(t,\cdot)\|^2\,
\\\leq\left(\|\alpha^+\|_\infty 
+\nu
\right)\|u(t,\cdot)\|^2
+\|\sqrt{a}u_{0x}\|^2\,+\|\alpha^-\|_\infty \|u_0\|^2+2\int^1_{-1}|F(0,x,u_0(x))|\,dx+\nu\int_{0}^t
\|u(s,\cdot)\|^2\,ds\\
\leq \left(\|\alpha^+\|_\infty+\nu
\right)\|u(t,\cdot)\|^2+|u_0|^2_{1,a}+\|\alpha^-\|_\infty \|u_0\|^2+c\,\|u_0\|_{1,a}^{\vartheta+1}+\nu\int_{0}^t
\|u(s,\cdot)\|^2\,ds,
\end{multline*}
where we denote with $\alpha^+$, $\alpha^-$ the positive and negative part of $\alpha$, respectively 
(see \ref{parti}).
\\
Let us consider for simplicity $\chi_T:=e^{(\nu+\|\alpha^+\|_\infty) T}.$
By Corollary 
 \ref{L2} (see also Remark \ref{dcstrict}), 
we deduce
\begin{multline*}
\|u(t,\cdot)\|^2+\|\sqrt{a}u_x(t,\cdot)\|^2+\int^t_{0}\|u_t(s,\cdot)\|^2\,ds\\
\leq \left(\|\alpha^+\|_\infty+\nu
+1\right)\|u(t,\cdot)\|^2+|u_0|^2_{1,a}+\|\alpha^-\|_\infty \|u_0\|^2+c\,\|u_0\|_{1,a}^{\vartheta+1}+\nu\int_{0}^t
\|u(s,\cdot)\|^2\,ds\\
\leq 
c\,\|u\|_{{\cal{B}}(Q_t)}^2+|u_0|^2_{1,a}+\|\alpha^-\|_\infty \|u_0\|^2+c\,\|u_0\|^{\vartheta+1}_{1,a}+\nu\,t\,\|u\|_{{\cal{B}}(Q_t)}^2
\,\\
\leq \left[
(c+\nu\,T)\,\nu_T^2\,e^{2\|\alpha^+\|_\infty T
}+\|\alpha^-\|_\infty+1 \right]\,(\|u_0\|^2+|u_0|^2_{1,a}) +c\,
\|u_0\|^{\vartheta+1}_{1,a}
\\
\leq
c(1+T)\,\nu_T^2\,e^{2\|\alpha^+\|_\infty T}
\left[\|u_0\|^2_{1,a}+\|u_0\|^{\vartheta+1}_{1,a} \right]
\leq c\,(1+T)\chi_T^2 \left[1+\|u_0\|^{\vartheta-1}_{1,a} \right]\|u_0\|^2_{1,a}\,.
\end{multline*}
Moreover, by the equation in $(\ref{Psemilineare}),$ we have
$$\left(a(x)u_x(t,x)\right)_x=u_t(t,x)-\alpha(x)u(t,x)-f(t,x,u),$$
then, for every $t\in(0,T),$ we obtain %
\begin{multline*}
    \int^t_{0}\|\left(a(\cdot)u_x(s,\cdot)\right)_x\|^2\,ds \leq2\int^t_{0}\|u_t(s,\cdot)\|^2\,ds+2\|\alpha^+\|^2_\infty\,\int^t_{0}\|u(s,\cdot)\|^2\,ds
    +2\int_{Q_t}|f(s,x,u)|^2\,dx\,ds\,\\
    \leq c\,
    (1+T)\chi_T^2 \left[1+\|u_0\|^{\vartheta-1}_{1,a} \right]\|u_0\|^2_{1,a}+2\int_{Q_t}|f(s,x,u)|^2\,dx\,ds.
\end{multline*}
 By Lemma \ref{f in L2}, we deduce
\begin{multline*}
\int_{Q_t}|f(s,x,u)|^2\,dx\,ds\,\leq \gamma_0^2\int_{Q_t}|u|^{2\vartheta}dx\,ds\,
\leq c\,t\|u\|_{H^1(0,t;L^2(-1,1))}\|u\|_{L^\infty(0,t;H^1_a(-1,1))}^{2\vartheta-1}\\
\leq c\,T\left(\int_0^t\|u_t(s,\cdot)\|^2\,ds\right)^\frac{1}{2}\,
\left(\sup_{t\in[0,T]}\|u(t,\cdot)\|_{1,a}\right)^{2\vartheta-1}\\
\leq c T \left[(1+T)\chi_T^2\left(1+\|u_0\|^{\vartheta-1}_{1,a} \right)\|u_0\|^2_{1,a}\right]^\frac{1}{2}\,\left[(1+T)^{\frac{1}{2}}\chi_T\left(1+\|u_0\|^{\vartheta-1}_{1,a} \right)^\frac{1}{2}\|u_0\|_{1,a}\right]^{2\vartheta-1}\\
\leq c T\,(1+T)^\vartheta \chi_T^{2\vartheta}\left(1+\|u_0\|^{\vartheta-1}_{1,a} \right)^\vartheta\,
\|u_0\|^{2\vartheta}_{1,a}\leq\,
c \,e^{(1+\vartheta)T} \chi_T^{2\vartheta}\left(1+\|u_0\|^{\vartheta-1}_{1,a} \right)^\vartheta\,
\|u_0\|^{2\vartheta}_{1,a}.
\end{multline*}
From which, the conclusion
\begin{multline*}
    \|u\|^2_{{\cal{H}}(Q_T)}\leq c\,\left[e^T
    \chi_T^2 \left(1+\|u_0\|^{\vartheta-1}_{1,a} \right)\|u_0\|^2_{1,a}
    +e^{(1+\vartheta)T} \chi_T^{2\vartheta}\left(1+\|u_0\|^{\vartheta-1}_{1,a} \right)^\vartheta\,\|u_0\|^{2\vartheta}_{1,a}\right]\\
    \leq c\,e^{(1+\vartheta)T}
    \chi_T^{2\vartheta}\left[1+\|u_0\|^{\vartheta-1}_{1,a}+
    \left(1+\|u_0\|^{\vartheta-1}_{1,a} \right)^\vartheta \right]\left(\|u_0\|^{2}_{1,a}+\|u_0\|^{2\vartheta}_{1,a}\right)\\
    \leq c\,e^{(1+\vartheta)T}
    \chi_T^{2\vartheta}\left(1+\|u_0\|^{\vartheta-1}_{1,a} \right)^\vartheta\,\left(1+\|u_0\|^{2\vartheta-2}_{1,a}\right)\|u_0\|^{2}_{1,a}\,\\
    \leq c\,e^{(1+\vartheta)T}
    e^{2(\nu+\|\alpha^+\|_\infty)\vartheta T}
    \left(1+\|u_0\|^{\vartheta-1}_{1,a} \right)^\vartheta\,\left(1+\|u_0\|^{\vartheta-1}_{1,a}\right)^2\|u_0\|^{2}_{1,a}\\
    \leq c\,
    e^{2[1+\nu+\|\alpha^+\|_\infty]\vartheta T}
    \left(1+\|u_0\|^{\vartheta-1}_{1,a} \right)^{2+\vartheta}\,
    \|u_0\|^{2}_{1,a}.
\end{multline*}
\end{proof}
\begin{center} \bf Acknowledgments\end{center}
\noindent I wish to express my thanks to professor Piermarco Cannarsa to
suggest the idea of my work. \\
The author is indebted to the anonymous referees for the criticism and the useful suggestions which have made this paper easier to read and to understand.
\bibliographystyle{elsarticle-num}

\end{document}